\documentclass[letterpaper,11pt]{amsart}
%Packages
\usepackage[margin=1.2in]{geometry}
\usepackage{amsmath,amsthm,amssymb}
\usepackage{xspace,xcolor}
\usepackage[breaklinks,colorlinks,citecolor=teal,linkcolor=teal,urlcolor=teal,pagebackref,hyperindex]{hyperref}
\usepackage[alphabetic]{amsrefs}
\usepackage[all]{xy}
\usepackage[english]{babel}
\usepackage{enumitem}
\usepackage{tikz,xcolor}
\usepackage{tikz-cd}
\usepackage{mathrsfs}
\usepackage{color}

\setlength{\parskip}{.05 in}

% For the introduction
\newcounter{intro}

\newtheorem{intro-conjecture}[intro]{Conjecture}
\newtheorem{intro-corollary}[intro]{Corollary}
\newtheorem{intro-theorem}[intro]{Theorem}

% Nicer hyperlinks
\newcommand{\theoremref}[1]{\hyperref[#1]{Theorem~\ref*{#1}}}
\newcommand{\lemmaref}[1]{\hyperref[#1]{Lemma~\ref*{#1}}}
\newcommand{\definitionref}[1]{\hyperref[#1]{Definition~\ref*{#1}}}
\newcommand{\propositionref}[1]{\hyperref[#1]{Proposition~\ref*{#1}}}
\newcommand{\conjectureref}[1]{\hyperref[#1]{Conjecture~\ref*{#1}}}
\newcommand{\corollaryref}[1]{\hyperref[#1]{Corollary~\ref*{#1}}}
\newcommand{\exampleref}[1]{\hyperref[#1]{Example~\ref*{#1}}}

%subsection{Theorem environments}

\theoremstyle{plain}
\newtheorem{thm}{Theorem}[section]

\newtheorem{lem}[thm]{Lemma}
\newtheorem{prop}[thm]{Proposition}
\newtheorem{cor}[thm]{Corollary}

\theoremstyle{definition}
\newtheorem{defi}[thm]{Definition}

\newtheorem{eg}[thm]{Example}

\theoremstyle{remark}
\newtheorem{rmk}[thm]{Remark}

%\numberwithin{equation}{section}

%\renewcommand{\labelenumi}{(\alph{enumi})}

%subsection{Macros}

\def\Mustata{Mus\-ta\-\c{t}\u{a}\xspace}

\def\N{{\mathbf N}}
\def\Z{{\mathbf Z}}
\def\Q{{\mathbf Q}}
\def\R{{\mathbf R}}
\def\C{{\mathbf C}}

\def\A{{\mathbf A}}

\def\cA{\mathcal{A}}
\def\cB{\mathcal{B}}

\def\cD{\mathcal{D}}

\def\cH{\mathcal{H}}

\def\cK{\mathcal{K}}

\def\cM{\mathcal{M}}
\def\cN{\mathcal{N}}
\def\cO{\mathcal{O}}

\def\cQ{\mathcal{Q}}

\def\cU{\mathcal{U}}

\def\tcM{\widetilde{\mathcal{M}}}

\def\O{\mathcal{O}}

\def\.{\cdot}
\def\^{\widehat}

\def\de{\partial}

\def\({\left(}
\def\){\right)}

\newcommand{\cx}[1]{\left[{{}#1}\right]}

\renewcommand{\and}{ \ \ \text{ and } \ \ }

\begin{document}

\author[Q.~Chen]{Qianyu Chen}

\address{Department of Mathematics, University of Michigan, 530 Church Street, Ann Arbor, MI 48109, USA}

\email{qyc@umich.edu}

\author[B.~Dirks]{Bradley Dirks}

\address{Department of Mathematics, Stony Brook University, Stony Brook, NY 11794-3651, USA}

\email{bradley.dirks@stonybrook.edu}

\author[S.~Olano]{Sebasti\'{a}n Olano}

\address{Department of Mathematics, University of Toronto, 40 St. George St., Toronto, Ontario Canada, M5S 2E4}

\email{seolano@math.toronto.edu}

\thanks{ QC was partially supported by NSF grant DMS-1952399. B.D. was partially supported by the National Science Foundation under Grant No. DMS-1926686 and MSPRF DMS-2303070.}

\subjclass[2020]{14F10, 14B05, 14J17}
\title{Restrictions of mixed Hodge modules using generalized $V$-filtrations}

\begin{abstract} We study generalized $V$-filtrations, defined by Sabbah, on $\mathcal D$-modules underlying mixed Hodge modules on $X\times \mathbf A^r$. Using cyclic covers, we compare these filtrations to the usual $V$-filtration, which is better understood. The main result shows that these filtrations can be used to compute the restriction functors $\sigma^!, \sigma^*$, where $\sigma \colon X \times \{0\} \to X \times \mathbf A^r$ is the inclusion of the zero section.

As an application, we use the restriction result to study singularities of complete intersection subvarieties. These filtrations can be used to study the local cohomology mixed Hodge module. In particular, we classify when weighted homogeneous isolated complete intersection singularities in $\mathbf A^n$ are $k$-Du Bois and $k$-rational.
\end{abstract}

\maketitle

\section{Introduction} Over the complex numbers, singularities of local complete intersection subvarieties have recently been studied using Saito's theory of mixed Hodge modules \cites{MustataPopaDuBois,CDMO,CDM1}. One of the cornerstones of these applications is the extension, beyond the hypersurface case, of the relation between the Hodge module structure on local cohomology with $V$-filtrations and Bernstein-Sato polynomials.

A key technical tool in extending this relationship is an understanding of the $V$-filtration of mixed Hodge modules along higher codimension smooth subvarieties. This $V$-filtration and its relation to Bernstein-Sato polynomials was first introduced in \cite{BMS}. The Hodge module theoretic properties were further studied in \cites{CD,CDS}. For a brief review of $V$-filtrations and mixed Hodge modules, see Section \ref{sect-preliminaries} below.

For a smooth complex algebraic variety $X$, we consider $T = X\times \A^r_t$ with coordinates $t_1,\dots, t_r$ on $\A^r_t$. Kashiwara and Malgrange showed that any $\cD_T$-module $\cM$ underlying a mixed Hodge module $M$ on $T$ admits a $V$-filtration $(V^\lambda \cM)_{\lambda \in \Q}$ by $\cD_X$ (not $\cD_T$)-submodules.

Three important properties of this filtration are that it is discretely indexed, we have
\[ t_i V^\lambda \cM \subseteq V^{\lambda+1}\cM, \quad \de_{t_i} V^{\lambda}\cM \subseteq V^{\lambda-1}\cM,\]
and the shifted Euler operator $\sum_{i=1}^r t_i \de_{t_i} - \lambda + r$ is nilpotent on ${\rm Gr}_V^\lambda(\cM)$.

We can define Koszul-like complexes
\[A^\lambda(\cM) = \left[V^\lambda \cM \xrightarrow[]{t_i} \bigoplus_{i=1}^r V^{\lambda+1}\cM \xrightarrow[]{t_i} \dots \xrightarrow[]{t_i} V^{\lambda+r}\cM\right],\]
and
\[B^\lambda(\cM) = \left[ {\rm Gr}_V^\lambda(\cM) \xrightarrow[]{t_i} \bigoplus_{i=1}^r {\rm Gr}_V^{\lambda+1}(\cM) \xrightarrow[]{t_i} \dots \xrightarrow[]{t_i} {\rm Gr}_V^{\lambda+r}(\cM) \right]
\]
placed in degree $0,1,\dots,r$. We also define
\[C^\lambda(\cM) = \left[ {\rm Gr}_V^{r+\lambda}(\cM) \xrightarrow[]{\de_{t_i}} \bigoplus_{i=1}^r {\rm Gr}_V^{r+\lambda-1}(\cM) \xrightarrow[]{\de_{t_i}} \dots \xrightarrow[]{\de_{t_i}} {\rm Gr}_V^{\lambda}(\cM) \right]\]
in degree $-r,\dots, 0$.

It is not hard to see (as shown in \propositionref{prop-KoszulVFilt} below) that most of these complexes are acyclic. The most important one (and essentially the only one) that is not necessarily acyclic is the complex for $\lambda = 0$. In fact, the cohomologies of this complex are shown to compute the $\cD$-module theoretic restriction of the module $\cM$ to the zero section $X \times \{0\} \subseteq X\times \A^r_t$.

The main results of \cites{CD,CDS} are extensions of the acyclicity and restriction results to the setting of mixed Hodge modules. Careful statements of these results are found in \theoremref{thm-CDResults} below.

The main objective of this paper is to extend these results to more general $V$-filtrations, defined by Sabbah \cite{Sabbah1}. The proper definition is given in Section \ref{sect-preliminaries} below. For now, we just mention that associated to a non-zero tuple $(a_1,a_2,\dots, a_r) \in \Z_{\geq 0}$, we can define a linear form $L = \sum_{i=1}^r a_i s_i$, which is called a \emph{slope}. Such a slope is \emph{non-degenerate} if $a_i > 0$ for all $i$. We set $|L| = \sum_{i=1}^r a_i$ and $L(t\de_t) = \sum_{i=1}^r a_i t_i \de_{t_i}$.

Sabbah defines the unique ${}^L V$-filtration on a $\cD_T$-module $\cM$ by similar properties to the $V$-filtration described above, except one requires $L(t\de_t) - \lambda + |L|$ to be nilpotent on ${\rm Gr}_{{}^L V}^{\lambda}(\cM)$, which for the remainder of the paper we will write as ${\rm Gr}_L^{\lambda}(\cM)$. This imposes the other conditions $t_i {}^L V^{\lambda}\cM \subseteq {}^L V^{\lambda+a_i}\cM$ and $\de_{t_i} {}^L V^{\lambda}\cM \subseteq {}^L V^{\lambda-a_i}\cM$. This filtration heavily depends on the ordered choice of coordinates $t_1,\dots, t_r$: for example, even reordering the coordinates gives a different filtration, corresponding to the slope with permuted coefficients.

If $L = \sum_{i=1}^r s_i$, then ${}^L V^\bullet \cM = V^\bullet \cM$ from above. In fact, if $L = \sum_{i\in I} s_i$ for some $I \subseteq \{1,\dots, r\}$, the filtration ${}^L V^\bullet \cM$ is the $V$-filtration along $\{t_i \mid i\in I\}$.

In analogy with the above, we can define Koszul-like complexes 
\[A^\lambda_L(\cM) = \left[{}^L V^\lambda \cM \xrightarrow[]{t_i} \bigoplus_{i=1}^r {}^L V^{\lambda+a_i}\cM \xrightarrow[]{t_i} \dots \xrightarrow[]{t_i} {}^L V^{\lambda+|L|}\cM\right],\]
\[B^\lambda_L(\cM) = \left[{\rm Gr}_L^\lambda(\cM) \xrightarrow[]{t_i} \bigoplus_{i=1}^r {\rm Gr}_L^{\lambda+a_i}(\cM) \xrightarrow[]{t_i} \dots \xrightarrow[]{t_i} {\rm Gr}_L^{\lambda+|L|}(\cM)\right]\]
placed in degree $0,1,\dots r$ and
\[C^\lambda_L(\cM) = \left[{\rm Gr}_L^{\lambda+|L|}(\cM) \xrightarrow[]{\de_{t_i}} \bigoplus_{i=1}^r {\rm Gr}_L^{\lambda+|L|-a_i}(\cM) \xrightarrow[]{\de_{t_i}} \dots \xrightarrow[]{\de_{t_i}} {\rm Gr}_L^{\lambda}(\cM)\right]\]
in degree $-r,\dots, 0$.

\propositionref{lem-DModuleRestriction} below shows that, at the $\cD$-module level, we have the acyclicity results for these complexes and the restriction result for $B^0_L(\cM)$. The main result of this note is the extension to the mixed Hodge module setting.

If $(\cM,F,W)$ is a bi-filtered left $\cD_T$-module underlying a mixed Hodge module $M$ on $T$, we define filtrations
\[F_p A^\lambda_L(\cM) = \left[ F_{p+r}{}^L V^\lambda \cM \xrightarrow[]{t_i} \bigoplus_{i=1}^r F_{p+r}{}^L V^{\lambda+a_i}\cM \xrightarrow[]{t_i} \dots \xrightarrow[]{t_i} F_{p+r}{}^L V^{\lambda+|L|}\cM\right],\]
\[F_p B^\lambda_L(\cM) = \left[F_{p+r}{\rm Gr}_L^\lambda(\cM) \xrightarrow[]{t_i} \bigoplus_{i=1}^r F_{p+r}{\rm Gr}_L^{\lambda+a_i}(\cM) \xrightarrow[]{t_i} \dots \xrightarrow[]{t_i} F_{p+r}{\rm Gr}_L^{\lambda+|L|}(\cM)\right],\]
\[F_p C^\lambda_L(\cM) = \left[F_p{\rm Gr}_L^{\lambda+|L|}(\cM) \xrightarrow[]{\de_{t_i}} \bigoplus_{i=1}^r F_{p+1}{\rm Gr}_L^{\lambda+|L|-a_i}(\cM) \xrightarrow[]{\de_{t_i}} \dots \xrightarrow[]{\de_{t_i}} F_{p+r}{\rm Gr}_L^{\lambda}(\cM)\right]\]

Moreover, the nilpotent operator $L(t\de_t) - \lambda +|L|$ on ${\rm Gr}_L^{\lambda}(\cM)$ and the induced filtration $M_\bullet {\rm Gr}_L^{\lambda}(\cM) = {\rm Gr}_L^{\lambda}(W_\bullet \cM)$ give rise to the \emph{relative monodromy filtration} $W_\bullet {\rm Gr}_L^{\lambda}(\cM)$. This is recalled in more detail in Section \ref{sect-preliminaries} below. The complexes $B^0_L(\cM)$ and $C^0_L(\cM)$ admit filtrations $W_\bullet B^0_L(\cM), W_\bullet C^0_L(\cM)$ by taking the relative monodromy filtration on each piece.

With this notation in place, we have the following:

\begin{intro-theorem} \label{thm-main} Let $L = \sum_{i=1}^r a_i s_i$ be a non-degenerate slope and let $(\cM,F,W)$ be a bi-filtered $\cD_T$-module underlying a mixed Hodge module. The complexes
\[ A_L^\chi(\cM,F) \text{ and } B_L^\chi(\cM,F) \text{ are filtered acyclic for all } \chi > 0,\]
and
\[ C_L^\chi(\cM,F) \text{ is filtered acyclic for all } \chi < 0.\]

Moreover, we have filtered quasi-isomorphisms
\[ \sigma^!(\cM,F) \cong A^0_L(\cM,F) \cong B^0_L(\cM,F),\]
and the latter two complexes are strictly filtered. Similarly,
\[ \sigma^*(\cM,F) \cong C^0_L(\cM,F),\]
and the rightmost complex is strictly filtered.

Finally, the filtrations $W_\bullet \cH^i B^0_L(\cM,F), W_\bullet \cH^i C_L^0(\cM,F)$ induced by $W_\bullet B^0_L(\cM,F), W_\bullet C^0_L(\cM,F)$ satisfy
\[ {\rm Gr}^W_k \cH^i B^0_L(\cM,F) \cong {\rm Gr}^W_{k+i} \cH^i \sigma^!(\cM,F),\]
\[ {\rm Gr}^W_k \cH^{-i} C^0_L(\cM,F) \cong {\rm Gr}^W_{k-i} \cH^{-i} \sigma^*(\cM,F),\]
as filtered $\cD_X$-modules underlying polarizable Hodge modules.
\end{intro-theorem}

\begin{rmk}
    The ideas in~\cite{CD} are not sufficient to prove \theoremref{thm-main} because the ${}^LV$-filtration depends on the choice of fiber coordinates.

    The ideas in~\cite{CDS} are used below; however, they do not automatically give us the strictness of the complexes $B^0_L(\cM,F)$ and $C^0_L(\cM,F)$. Indeed, the same problem arises in this situation: the ${}^L V$-filtration depends on the choice of coordinates $t_1,\dots, t_r$, and to prove strictness as in \cite{CDS}, one replaces $t_r$ by a general linear combination $\widetilde{t}_r$ of $t_1,\dots, t_r$, and considers the coordinates $t_1,\dots, \widetilde{t}_r$.
\end{rmk}

\begin{rmk} \label{rmk-closedProblems} Originally, we set out to find out whether one can understand the Fourier-Laplace transform of an $L$-monodromic mixed Hodge module. Ideally, such an understanding would have helped to relate the $B_L$ and $C_L$ complexes, hence proving the formula for $\sigma^*$ in \theoremref{thm-main} from the formula for $\sigma^!$. Indeed, by definition the Fourier-Laplace transform interchanges the action of $t_1,\dots, t_r$ (concerning the $!$-restriction) with $\partial_{y_1},\dots, \partial_{y_r}$ (concering the $*$-restriction), where $y_1,\dots, y_r$ are the corresponding dual coordinates on the dual vector bundle.

However, this is too optimistic: for $L \neq \sum_{i=1}^r s_i$, the Fourier-Laplace transform of an $L$-monodromic regular holonomic module $\cM$ is rarely regular holonomic. Indeed, by \cite{IT}*{Cor. 1.3}, if it were regular holonomic, then $\cM$ would have to be monodromic. So it is not possible for the Fourier-Laplace transform of an $L$-monodromic mixed Hodge module to remain a mixed Hodge module. One could instead look at irregular Hodge modules, but we do not pursue that in this paper.
\end{rmk}

The paper ends with an application of \theoremref{thm-main} to the study of singularities of local complete intersection subvarieties. If $Z \subseteq X$ is defined by a regular sequence $f_1,\dots, f_r$ in $\cO_X(X)$, we can define a pure Hodge module $B_f$ on $T$ of weight $n = \dim(X)$. Its underlying $\cD_T$-module is
\[ \cB_f = \bigoplus_{\alpha \in \N^r} \cO_X \de_t^\alpha \delta_f,\]
whose $\cD$-module structure is explained in Section \ref{sect-singularities} below. Here $\delta_f$ is a formal symbol: if one identifies $\cB_f \cong \cH^r_{\Gamma}(\cO_{X\times \A^r_t})$ (where $\Gamma = \Gamma(X)$ is the image of the graph embedding $\Gamma \colon X \to X\times \A^r_t$ along $f_1,\dots, f_r$), then $\delta_f$ corresponds to the class of $\frac{1}{(f_1-t_1)\dots (f_r-t_r)}$.

If $\sigma \colon X \times \{0\} \to T$ is the inclusion of the zero section, we have a natural isomorphism (by Saito's base change result \cite{SaitoMHM}*{(4.4.3)}, which says $\sigma^! \Gamma_* \Q_X^H[\dim X ] \cong i_* i^! \Q_X^H[\dim X]$, where $i\colon Z \hookrightarrow X$ is the closed embedding)
\[ \cH^r \sigma^! B_f \cong (\cH^r_Z(\cO_X),F,W),\]
where the right hand side is the bi-filtered local cohomology $\cD_X$-module along $Z$, which is traditionally computed using the \v{C}ech complex for $f_1,\dots ,f_r$.

By \cites{MustataPopaDuBois,CDMO,CDM1}, the Hodge module structure of $\cH^r_Z(\cO_X)$ is related to higher classes of singularities. These are the classes of $k$-Du Bois and $k$-rational singularities, where $k\in \Z_{\geq 0}$, whose definitions are reviewed in Section \ref{sect-singularities} below. For $k =0$, these agree with the classical notions of Du Bois and rational singularities, hence the terminology.

In \cite{CDMO}, the authors and \Mustata define a numerical invariant of $Z$, the \emph{minimal exponent} $\widetilde{\alpha}(Z)$,  in terms of $V^\bullet \cB_f$. The minimal exponent satisfies the following implications:
\[ \widetilde{\alpha}(Z) \geq r+k \iff Z \text{ has } k\text{-Du Bois singularities,}\]
\[ \widetilde{\alpha}(Z) > r+k \iff Z \text{ has } k\text{-rational singularities.}\]

From this, we see that $Z$ has $k$-rational singularities if it has $(k+1)$-Du Bois singularities, and it has $k$-Du Bois singularities if it has $k$-rational singularities. We say $Z$ has $k$-liminal singularities if $\widetilde{\alpha}(Z) = r+k$.

Our first main result in the study of singularities is the following description of the mixed Hodge module structure on local cohomology:
\begin{intro-theorem} \label{thm-LVDescribeLocalCohomology} Let $Z = V(f_1,\dots, f_r) \subseteq X$ be a complete intersection of pure codimension $r$. Let $L$ be a non-degenerate slope. Then for all $p, \ell \in \Z_{\geq 0}$, we have
\[ F_{p} W_{n+r+\ell} \cH^r_Z(\cO_X)\] \[ = \left\{ \sum_{|\alpha| \leq p} \frac{\alpha! h_\alpha}{f_1^{\alpha_1+1}\dots f_r^{\alpha_r+1}} \mid u = \sum_{|\alpha|\leq p} h_\alpha \de_t^\alpha \delta_f \in {}^L V^{|L|}\cB_f, L(t\de_t)^{\ell+1}u \in {}^L V^{>|L|}\cB_f\right\}.\]
\end{intro-theorem}

An immediate corollary of this computation is the following:

\begin{intro-corollary} \label{cor-LVMinExp} Let $Z = V(f_1,\dots, f_r) \subseteq X$ be a complete intersection of pure codimension $r$. Let $L$ be a non-degenerate slope. Then
\[ F_{k+r} \cB_f \subseteq {}^L V^{|L|}\cB_f \iff Z \text{ has }k\text{-Du Bois singularities},\]
\[ L(t\de_t) F_{k+r} \cB_f \subseteq {}^L V^{>|L|}\cB_f \iff Z \text{ has } k\text{-rational singularities.}\]
\end{intro-corollary}

Finally, the corollary allows us to prove the following, which is a generalization of the main result of \cite{CDM3}. Note that, here, we do not give an exact formula for the minimal exponent, but what we show is enough for the singularity classification.

Let $f_1,\dots, f_r \in \C[x_1,\dots ,x_n]$ be weighted homogeneous of (integer) degrees $d_1 \leq \dots \leq d_r$ with weights $(w_1,\dots, w_n) \in \Z^n_{\geq 1}$. Assume $Z = V(f_1,\dots, f_r) \subseteq \A^n_x$ is a complete intersection of pure codimension $r$ with an isolated singularity at $0$. 

\begin{rmk} The upper bound in the following corollary is a general result, due to \cite{CDM3}*{Thm. 1.2}.

The use of the ${}^L V$-filtration to establish the lower bound is essential. Indeed, if $d_1 = \dots = d_r = d$, then a stronger result is known by \cite{CDMO}*{Ex. 4.23}. However, the main point of that case is that one can explicitly compute the $V$-filtration using weighted homogeneity and Kashiwara's equivalence.

If the $d_i$ are not all equal, then it is more natural to consider the ${}^L V$-filtration with $L = \sum_{i=1}^r d_i s_i$, because this can be understood using weighted homogeneity and Kashiwara's equivalence (see \eqref{eq-homogeneity} below).
\end{rmk}

\begin{intro-corollary} \label{cor-ComputeMinExp}
The complete intersection $Z$ has Du Bois (hence, log canonical) singularities if and only if $|w| = \sum_{i=1}^n w_i \geq d_1 + \dots +d _r$. In this case, let
\[ k = \left \lfloor \frac{|w|-\sum_{i=1}^r d_i}{d_r} \right\rfloor.\]

Then
\[ r + k\leq \widetilde{\alpha}_0(Z) \leq   r + \frac{|w|-\sum_{i=1}^r d_i}{d_r},\]
and in fact, $\widetilde{\alpha}_0(Z) = r + k$ if and only if $d_r \mid (|w| - \sum_{i=1}^r d_i)$.

In particular, for this value of $k$, we see
\[ Z \text{ has } k\text{-liminal singularities near } 0 \iff d_r \mid \left(|w| - \sum_{i=1}^r d_i\right)\]
\[ Z \text{ has } k\text{-rational singularities near } 0 \iff d_r \nmid \left(|w| - \sum_{i=1}^r d_i\right).\]
\end{intro-corollary}

We expect that the equality $\widetilde{\alpha}_0(Z) =  r + \frac{|w|-\sum_{i=1}^r d_i}{d_r}$ holds in the setting of~\corollaryref{cor-ComputeMinExp}.

\textit{Outline.} Section \ref{sect-preliminaries} contains the background needed for the proofs of the main results. In Subsection \ref{subsect-MHM}, we provide a review of the theory of mixed Hodge modules, including a review of hypersurface $V$-filtrations. Subsection \ref{subsect-HigherCodimV} reviews the results for higher codimension $V$-filtrations. The definition and properties of the filtration ${}^L V^\bullet \cM$ are given in Section \ref{sect-LV}. The Verdier specialization process is used in Subsection \ref{subsect-Specializations} to study ${}^L V$-filtrations on mixed Hodge modules, using the properties of hypersurface $V$-filtrations.

Section \ref{sect-CyclicCovers} contains the proof of \theoremref{thm-main}. It begins with an analysis of mixed Hodge modules under cyclic coverings $X\times \A^r_w \to X\times \A^r_t$ defined by 
\[(x,w_1,\dots, w_r) \mapsto (x,w_1^{a_1},\dots, w_r^{a_r}).\] 

The main point is that the usual $V$-filtration on $\pi^!(\cM)$ can be related to the ${}^L V$-filtration on $\cM$. Two difficulties are that $\pi^!(-)$ need not preserve pure modules (though we give a criterion for when a pure Hodge module pulls back to a pure module, in a special case) and the Hodge filtration is not easy to understand. We work around this using an inductive strategy on the number of $a_i$ which are strictly larger than $1$. By focusing on this case (say only $a_i > 1$), we can leverage the fact that, over the \'{e}tale locus $\{w_i \neq 0\}$, the Hodge filtration on $\pi^!(\cM)$ is fully understood in terms of the Hodge filtration on $\cM$. Then we use the $V$-filtration along $w_i$ to get control on the Hodge filtration of $V^0\pi^!(\cM)$, which suffices to compute the restriction functor.

The final Section \ref{sect-singularities} contains the proofs of \theoremref{thm-LVDescribeLocalCohomology} and \corollaryref{cor-LVMinExp} and the example of weighted homogeneous complete intersections with isolated singularities. In particular, The proof of \corollaryref{cor-ComputeMinExp} is a combination of results in Section \ref{whci}. Though we cannot give an exact computation of the minimal exponent in the latter example (except in special cases), we give an easy criterion to check whether such a subvariety has $k$-Du Bois or $k$-rational singularities.

\noindent {\bf Acknowledgments.} The authors would like to thank Mircea \Mustata for his thoughtful advice. We would also like to thank Lei Wu for many helpful discussions on the topic of the paper. The second author would like to thank Christian Schnell and Claude Sabbah for many enlightening conversations about ideas occurring in this work, as well as Tong Zhou for pointing out the reference \cite{IT} used in Remark \ref{rmk-closedProblems}.

\section{Preliminaries} \label{sect-preliminaries}
We do not provide a review of the theory of $\cD$-modules in this paper, though we will review the necessary notions as they arise. For reference, see \cite{HTT}.

\subsection{Mixed Hodge modules and V-filtrations of hypersurfaces} \label{subsect-MHM} In this subsection, we discuss the relevant aspects of the theory of mixed Hodge modules. For details, one should consult Saito's papers \cites{saitoMHP,SaitoMHM} or Schnell's survey article \cite{SchnellMHM}.

Let $X$ be a smooth algebraic variety over $\C$ with $\dim(X) = n$. Saito associates to $X$ an abelian category ${\rm MHM}(X)$, the category of \emph{mixed Hodge modules} on $X$. A mixed Hodge module on $X$ consists of the following data: a filtered regular holonomic $\cD_X$-module $(\cM,F)$, with a finite filtration $W_\bullet \cM$ by sub-$\cD_X$-modules, a finite filtered $\Q$-perverse sheaf $(\cK,W)$, and a comparison morphism
\[ \alpha \colon (\cK, W) \otimes_{\Q} \C \to {\rm DR}_X(\cM,W)\]
which is a filtered quasi-isomorphism. These data are subject to various conditions which we will not fully explain here. Some of the conditions are the following, which concern the interaction between the Hodge filtration $F_\bullet \cM$ and the $V$-filtration $V^\bullet \cM$ on $\cM$ along any (locally defined) function $f \in \cO_X$.

First, we recall the definition of the $V$-filtration of $\cD_{X\times \A^1_t}$-modules along $t$, which is the coordinate on $\A^1$. Let $\cM$ be a coherent $\cD_{X\times \A^1_t}$-module.

\begin{defi} \label{defi-VFilt} The $\cD_{X\times \A^1_t}$-module $\cM$ admits a \emph{$V$-filtration along $t$}, which, following Saito \cite{saitoMHP}*{Sect. 3}, is an exhaustive decreasing, discrete (meaning there exists an increasing sequence $\alpha_j \in \Q$ with $\lim_{j\to -\infty} \alpha_j = -\infty$ and $\lim_{j\to \infty} \alpha_j = \infty$, such that $V^\chi\cM$ for $\chi \in (\alpha_j,\alpha_{j+1})$ only depends on $j$) and left continuous (meaning $V^\chi\cM = \bigcap_{\beta < \chi}V^\beta\cM$. In other words, $V^\chi\cM$ is constant for $\chi \in (\alpha_j,\alpha_{j+1}]$), $\Q$-indexed filtration $V^\bullet\cM$ satisfying the following conditions:
\begin{enumerate} \item(Coherence) $V^\alpha \cM$ is coherent over $V^0\cD_{X\times \A^1} = \cD_X[t,t\de_t]$,
\item(Compatibility, 1) \label{cond-good} $t V^\alpha \cM \subseteq V^{\alpha+1}\cM$ for all $\alpha \in \Q$, with equality for $\alpha \gg 0$,
\item(Compatibility, 2) $\de_t V^{\alpha}\cM \subseteq V^{\alpha-1}\cM$ for all $\alpha \in \Q$,
\item(Nilpotence) \label{cond-nilpotent} the operator $t\de_t - \alpha +1$ is nilpotent on ${\rm Gr}_V^{\alpha}(\cM) = V^\alpha\cM/V^{>\alpha}\cM$, where $V^{>\alpha}\cM = \bigcup_{\beta > \alpha} V^\beta \cM$.
\end{enumerate}
\end{defi}

\begin{rmk} Below, we will also use the operator \[s= -\de_{t} t\] which is more natural in the study of singularities and $b$-functions. We clearly have the equality $s = -(t\de_t+1)$ so the Nilpotence Condition \eqref{cond-nilpotent} of Definition \ref{defi-VFilt} can be restated as requiring $s+\alpha$ to be nilpotent on ${\rm Gr}_V^\alpha(\cM)$.
\end{rmk}

\begin{rmk} If a $V$-filtration on $\cM$ along $t$ exists, then it is unique. Hence, existence is an intrinsic property of the $\cD$-module $\cM$. 
\end{rmk}

\begin{eg} \label{eg-Kashiwara} Let $\cM$ be supported in $\{t=0\}$. Then Kashiwara's equivalence (see \cite{HTT}*{Sect. 1.6}) tells us that $\cM = i_+ \cM_0$ where $\cM_0 = \ker(t) \subseteq \cM$ is a $\cD_{X\times \{0\}}$-module and $i\colon X\times \{0\} \to X\times \A^1_t$ is the closed embedding. Thus,
\[ \cM =\bigoplus_{k\geq 0} \cM_0 \de_t^k,\]
and it is not hard to check that
\[ V^\lambda \cM = V^{\lceil \lambda \rceil} \cM = \bigoplus_{k\leq -\lceil \lambda \rceil} \cM_0 \de_t^k\]
is a $V$-filtration of $\cM$ along $t$.
\end{eg}

The uniqueness of $V$-filtrations implies the following:
\begin{lem} \label{lem-VFiltStrict} Let $0 \to \cM_1 \to \cM_2 \to \cM_3 \to 0$ be a short exact sequence of $\cD_{X\times \A^1_t}$-modules such that $\cM_i$ admits a $V$-filtration along $t$ for $i=1,2,3$. Then, for all $\lambda \in \Q$, the sequence
\[ 0 \to V^\lambda \cM_1 \to V^\lambda \cM_2 \to V^\lambda \cM_3 \to 0\]
is exact.
\end{lem}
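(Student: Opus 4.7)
The proof strategy is to leverage the uniqueness of the $V$-filtration. The plan is to produce, from $V^\bullet \cM_2$, candidate filtrations on $\cM_1$ and $\cM_3$, verify that each satisfies Saito's defining axioms, and conclude by uniqueness that these candidates agree with $V^\bullet \cM_1$ and $V^\bullet \cM_3$ respectively. Exactness of $0 \to V^\lambda \cM_1 \to V^\lambda \cM_2 \to V^\lambda \cM_3 \to 0$ is then immediate.

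Writing $\pi \colon \cM_2 \to \cM_3$ for the surjection, set
$$ U^\lambda \cM_1 := \cM_1 \cap V^\lambda \cM_2, \qquad U^\lambda \cM_3 := \pi(V^\lambda \cM_2). $$
Most of Saito's axioms transfer easily. Discreteness and left continuity are inherited from $V^\bullet \cM_2$, and the containments $tU^\lambda \subseteq U^{\lambda+1}$ and $\de_t U^\lambda \subseteq U^{\lambda-1}$ follow because $\pi$ is $\cD$-linear and these operations commute with images and intersections. A diagram chase yields a short exact sequence
$$ 0 \to {\rm Gr}_U^\lambda \cM_1 \to {\rm Gr}_V^\lambda \cM_2 \to {\rm Gr}_U^\lambda \cM_3 \to 0, $$
so nilpotency of $t\de_t - \lambda + 1$ on the outer terms is inherited from the middle. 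For coherence over $V^0\cD$, the module $U^\lambda \cM_3$ is the image of a coherent module, hence coherent; $U^\lambda \cM_1$ is a $V^0\cD$-submodule of the coherent module $V^\lambda \cM_2$, which suffices because $V^0\cD_{X\times \A^1}$ is locally Noetherian (locally it looks like $\cD_X[t, t\de_t]$).

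The main obstacle I expect is verifying the eventual stability axiom $tU^\lambda = U^{\lambda+1}$ for $\lambda \gg 0$. For $\cM_3$ this is immediate from surjectivity of $\pi$ combined with the same property for $\cM_2$. For $\cM_1$, given $x = ty \in \cM_1 \cap V^{\lambda+1}\cM_2$ with $y \in V^\lambda \cM_2$, one must modify $y$ to lie in $\cM_1$; the obstruction $\pi(y) \in V^\lambda \cM_3$ satisfies $t\pi(y) = 0$. I would handle this by first executing the argument for $\cM_3$ (which does not rely on the $\cM_1$-step) to identify $U^\bullet \cM_3 = V^\bullet \cM_3$, and then using that, for $\lambda \neq 0$, multiplication by $t$ is injective on ${\rm Gr}_V^\lambda \cM_3$ (a direct consequence of the nilpotency of $t\de_t - \lambda + 1$); iteratively pushing $\pi(y)$ into deeper steps of $V^\bullet \cM_3$ and using discreteness forces $\pi(y) = 0$, whence $y \in \cM_1$ as required. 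With all axioms established, uniqueness yields $U^\bullet \cM_i = V^\bullet \cM_i$ for $i = 1, 3$, which is exactly the strictness claim.
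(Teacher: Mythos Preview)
Your approach is correct and is precisely what the paper intends: it states the lemma as a direct consequence of uniqueness of the $V$-filtration, without spelling out a proof, and your argument is the standard way to make that implication explicit.

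One small correction. In the last step for $\cM_1$ you write that ``iteratively pushing $\pi(y)$ into deeper steps of $V^\bullet\cM_3$ and using discreteness forces $\pi(y)=0$''. Discreteness only says there are finitely many jumps in each bounded interval; it does not by itself give $\bigcap_{\mu}V^\mu\cM_3=0$. What you are really using is \emph{separatedness} of the $V$-filtration on $\cM_3$. This does hold and is standard (for instance, the intersection is a $\cD$-submodule carrying the constant induced filtration, and one checks this is incompatible with the axioms unless it is zero), but you should invoke separatedness rather than discreteness. With that adjustment the argument goes through.
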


From this and \exampleref{eg-Kashiwara}, we see that positive pieces of the $V$-filtration only depend on the restriction to $\{t\neq0\}$.

\begin{lem} \label{lem-PositivePieces} Let $\varphi \colon \cM \to \cN$ be a morphism of $\cD_{X\times \A^1_t}$-modules such that $\varphi\vert_{\{t\neq0\}}$ is an isomorphism. Then for all $\lambda > 0$, $\varphi$ induces an isomorphism
\[ \varphi \colon V^\lambda \cM \cong V^\lambda \cN.\]
\end{lem}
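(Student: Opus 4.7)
The plan is to decompose $\varphi$ via its kernel, image, and cokernel, and combine \lemmaref{lem-VFiltStrict} with the explicit formula from \exampleref{eg-Kashiwara}. Set $K = \ker\varphi$, $I = \im\varphi$, and $C = \coker\varphi$. Since $\varphi|_{\{t\neq 0\}}$ is an isomorphism, both $K$ and $C$ are $\cD_{X\times\A^1_t}$-modules supported on $\{t=0\}$, so by Kashiwara's equivalence one may write $K = i_+ K_0$ and $C = i_+ C_0$ for some $\cD_X$-modules $K_0$, $C_0$, where $i\colon X\times\{0\}\hookrightarrow X\times\A^1_t$ is the zero-section inclusion.

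The computation in \exampleref{eg-Kashiwara} then identifies
\[ V^\lambda K = \bigoplus_{k\leq -\lceil \lambda \rceil} K_0\,\de_t^k \quad\text{and}\quad V^\lambda C = \bigoplus_{k\leq -\lceil \lambda \rceil} C_0\,\de_t^k,\]
both of which vanish as soon as $\lambda > 0$, since then $\lceil \lambda \rceil \geq 1$ while the summations range only over $k \geq 0$.

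Next, I would split $\varphi$ into the two short exact sequences
\[ 0 \to K \to \cM \to I \to 0 \quad \text{and} \quad 0 \to I \to \cN \to C \to 0,\]
and apply \lemmaref{lem-VFiltStrict} to each. Splicing the resulting short exact sequences of $V$-filtered pieces yields, for every $\lambda \in \Q$, an exact sequence
\[ 0 \to V^\lambda K \to V^\lambda \cM \xrightarrow{\varphi} V^\lambda \cN \to V^\lambda C \to 0\]
in which the middle map is precisely the one induced by $\varphi$. Specializing to $\lambda > 0$ kills the outer terms by the previous paragraph, giving the desired isomorphism $V^\lambda \cM \cong V^\lambda \cN$.

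The main (and essentially only) technical hurdle is the middle step: to invoke \lemmaref{lem-VFiltStrict} one must know that $I$ itself admits a $V$-filtration along $t$. This is standard — it follows, for instance, from the existence of $V$-filtrations on any regular holonomic $\cD$-module, which in our context is automatic — but if one prefers to avoid any citation, one can simply \emph{define} $V^\lambda I := \im(V^\lambda \cM \to \cN)$, verify the defining axioms of a $V$-filtration directly from those of $\cM$ and $\cN$, and appeal to uniqueness; the strictness argument above then proceeds unchanged.
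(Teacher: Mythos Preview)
Your proposal is correct and follows precisely the route the paper intends: the paper does not spell out a proof but simply remarks that the lemma follows ``from this [\lemmaref{lem-VFiltStrict}] and \exampleref{eg-Kashiwara}'', which is exactly the kernel/cokernel argument you give. Your final paragraph on the existence of a $V$-filtration on $I$ is a reasonable extra care, though in the paper's setting (regular holonomic modules, or modules underlying mixed Hodge modules) this is automatic.
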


For $f\in \cO_X(X)$, let $i_f\colon X \to X\times \A^1_t$ be the graph embedding along $f$. We say that the $V$-filtration of a $\cD_X$-module $\cM$ along $f$ exists if the $V$-filtration of $i_{f,+}(\cM)$ along $t$ exists. For $\cD_X$-modules underlying a mixed Hodge module, the $V$-filtration exists for any locally defined function $f\in \cO_X$.

Returning to a $\cD_{X\times \A^1}$-module $\cM$, it is not hard to see using the Nilpotence Condition \eqref{cond-nilpotent} of Definition \ref{defi-VFilt} that the maps
\[ t \colon {\rm Gr}_V^\alpha(\cM) \to {\rm Gr}_V^{\alpha+1}(\cM),\]
\[ \de_t \colon {\rm Gr}_V^{\alpha+1}(\cM) \to {\rm Gr}_V^{\alpha}(\cM)\]
are isomorphisms for all $\alpha \neq 0$. In fact, the Compatibility Condition \eqref{cond-good} of Definition \ref{defi-VFilt} shows that $t \colon V^\alpha \cM \to V^{\alpha+1}\cM$ is an isomorphism for all $\alpha > 0$.

If $(\cM,F)$ is a filtered $\cD_X$-module, then $i_{f,+}(\cM)$ is also a filtered $\cD_{X\times \A^1_t}$-module. Indeed, we can write $i_{f,+}(\cM) = \bigoplus_{k\geq 0} \cM \de_t^k \delta_f$, and we have
\[ F_p i_{f,+}(\cM) = \bigoplus_{k\geq 0} F_{p-k-1}\cM \de_t^k \delta_f,\]
where the shift by $1$ is a normalizing convention due to the relative dimension of $i_f \colon X \to X\times \A^1_t$.

Saito imposes the following conditions on the filtration $F_\bullet \cM$: for any $f\in \cO_X(X)$, we have
\begin{equation} \label{cond-tFiltIso} t \colon F_p V^\alpha i_{f,+}(\cM) \to F_p V^{\alpha+1} i_{f,+}(\cM) \text{ is an isomorphism for all } \alpha > 0,\end{equation}
\begin{equation} \label{cond-dFiltIso} \de_t \colon F_p {\rm Gr}_V^{\alpha+1}(i_{f,+}(\cM)) \to F_{p+1} {\rm Gr}_V^{\alpha}(i_{f,+}(\cM)) \text{ is an isomorphism for all } \alpha < 0.\end{equation}

\begin{rmk} \label{rmk-goodnessOfFOnV0} Another property imposed on filtered  $\cD$-modules underlying mixed Hodge modules is the following: for $(\cM,F)$ underlying a mixed Hodge module on $X\times \A^1_t$, the filtration induced by $F_\bullet \cM$ on ${\rm Gr}_V^\lambda(\cM)$ is a good filtration.

In fact, we have the following (see \cite{saitoMHP}*{Cor. 3.4.7} and \cite{MHMProj}*{Prop. 10.6.5}): for any $\lambda \in \Q$, let $R_F(V^\lambda \cM) = \bigoplus_{p \in \Z} F_p V^\lambda \cM z^p$. This is a module over $R_F(V^0\cD_{X\times \A^1_t})$, and in fact is coherent over that ring. It is even coherent over the subring $R_F(\cD_{X\times \A^1_t/\A^1_t}) = R_F(\cD_X[t])$, due to the regular holonomicity.
\end{rmk}

For $i\colon H = V(f) \to X$ the inclusion of the hypersurface defined by $f$ into $X$, we can restrict a mixed Hodge module $M$ on $X$ along $i$ to get $i_* i^! M \in D^b({\rm MHM}(X))$. In fact, for underlying filtered $\cD$-modules, this restriction is given by the morphism
\[ t \colon {\rm Gr}_V^0(i_{f,+}(\cM,F[-1])) \to {\rm Gr}_V^1(i_{f,+}(\cM,F[-1])),\]
which is the motivation for \theoremref{thm-main}. Here the shift on the filtration is due to our use of left filtered $\cD_{X\times \A^1_t}$-modules.

In the remainder of this section, we will work with $M$ a mixed Hodge module on $X\times \A^1_t$.

The following lemma comes immediately from conditions \eqref{cond-tFiltIso} and \eqref{cond-dFiltIso}, respectively. See \cite{saitoMHP}*{Prop. 3.2.2, Rmk. 3.2.3} for the argument.

\begin{lem} Let $(\cM,F)$ underlie a mixed Hodge module on $X\times \A^1_t$. Let $j\colon \{t\neq 0\} \to X\times \A^1_t$. Then for all $\lambda > 0$ and $p\in \Z$, we have equality
\[ F_pV^{\lambda} \cM = V^{\lambda}\cM \cap j_*(F_p j^*\cM)\]
and
\[ F_p \cM = \sum_{k\geq 0} \de_t^k(F_{p-k}V^0\cM). \]
\end{lem}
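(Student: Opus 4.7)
The plan is to prove each equation separately, using the condition indicated by the hint.

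\textbf{First equation.} The inclusion $F_p V^\lambda \cM \subseteq V^\lambda \cM \cap j_*(F_p j^*\cM)$ is immediate, since $F_p V^\lambda \cM = F_p \cM \cap V^\lambda \cM$ and restriction to $\{t \neq 0\}$ sends $F_p\cM$ into $F_p j^*\cM$. For the reverse, I would take $m \in V^\lambda \cM$ with $m|_{t \neq 0} \in F_p j^*\cM$ and aim to show $m \in F_p \cM$. The plan is to reduce to $\lambda \gg 0$ by iterating \eqref{cond-tFiltIso}: for $\alpha > 0$, the map $t^N\colon F_p V^\alpha \cM \to F_p V^{\alpha+N}\cM$ is an isomorphism, and since $t^N \in \cO$ preserves $F_p$ on $j^*\cM$, the open-set hypothesis lifts to $t^Nm$, while $t^Nm \in V^{\lambda + N}\cM$. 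Once the statement is known for some large $\lambda + N$, the isomorphism descends back to $\lambda$ (at least for $\lambda > 0$). For the base case $\alpha \gg 0$, condition (2) of the $V$-filtration gives $t V^{\alpha-1}\cM = V^\alpha\cM$, which together with the goodness of $F_\bullet$ on $V^0\cM$ (Remark~\ref{rmk-goodnessOfFOnV0}) reduces the equality to a finite verification on a fundamental domain of $V$-indices.

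\textbf{Second equation.} The inclusion $\sum_{k\geq 0} \de_t^k(F_{p-k}V^0\cM) \subseteq F_p\cM$ is clear since $\de_t \in F_1\cD$. For the reverse, I would run descending induction on the $V$-level, driven by \eqref{cond-dFiltIso}. Given $m \in F_p\cM$, exhaustiveness of $V^\bullet$ places $m \in F_p V^\lambda\cM$ for some $\lambda$. If $\lambda \geq 0$, then $m \in F_p V^0\cM$, so the $k=0$ term suffices. Otherwise $\lambda < 0$, and \eqref{cond-dFiltIso} yields an iso $\de_t\colon F_{p-1}\Gr_V^{\lambda+1}\cM \xrightarrow[]{\cong} F_p \Gr_V^\lambda\cM$. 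The image of $m$ in $F_p\Gr_V^\lambda\cM$ therefore equals $\de_t(\bar n)$ for some $\bar n \in F_{p-1}\Gr_V^{\lambda+1}\cM$; lifting $\bar n$ to $n \in F_{p-1}V^{\lambda+1}\cM$, the element $m - \de_t n$ lies in $F_p V^{>\lambda}\cM$. Induction on the $V$-level (which increases strictly, as $m - \de_t n$ lives in a higher piece and $n$ already sits above $\lambda$) then expresses both $m - \de_t n$ and $n$ inside the desired sum, so $m$ does too. Discreteness of $V^\bullet$ ensures termination in finitely many steps.

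\textbf{Main obstacle.} The case $\lambda \leq 0$ of the first equation looks most delicate, since \eqref{cond-tFiltIso} only controls Hodge-filtered pieces for $\alpha > 0$; descending from $t^Nm \in F_p\cM$ (for $N$ large) back to $m \in F_p\cM$ requires a separate argument, likely combining strictness of $F$ with respect to $V$ and \eqref{cond-dFiltIso} with an understanding of the $t$-torsion within $V^\lambda\cM$. For the second equation, the only care required is termination of the induction, which is immediate from discreteness.
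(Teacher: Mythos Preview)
Your argument for the second equation is correct and is exactly the standard one.

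For the first equation, two issues. First, the statement is only meant for $\lambda > 0$: the paper says this equation ``comes immediately from condition \eqref{cond-tFiltIso}'', which is a condition for $\alpha > 0$, and \lemmaref{lem-strictSupportHodge} handles $\lambda = 0$ only under an extra strict-support hypothesis. In fact the equality is false for $\lambda \leq 0$ in general: if $\cM = i_+\cM_0$ is supported on $\{t=0\}$ then $j^*\cM = 0$, so the right side vanishes, while $F_p V^0\cM = F_p\cM_0$ is typically nonzero. So your ``main obstacle'' is not an obstacle at all; that case is simply excluded.

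Second, for $\lambda > 0$ your reduction by $t^N$ is right, but there is no need for a base case or for Remark~\ref{rmk-goodnessOfFOnV0}. The point you are missing is that $j_*(F_p j^*\cM) = (F_p\cM)[t^{-1}]$, so $m \in V^\lambda\cM \cap j_*(F_p j^*\cM)$ means precisely that $t^N m \in F_p\cM$ for some $N \geq 0$. Since also $t^N m \in V^{\lambda+N}\cM$, we get $t^N m \in F_p\cM \cap V^{\lambda+N}\cM = F_p V^{\lambda+N}\cM$ \emph{by definition} of the induced filtration. Now \eqref{cond-tFiltIso} says $t^N \colon F_p V^\lambda\cM \to F_p V^{\lambda+N}\cM$ is a bijection (for $\lambda > 0$), so $m \in F_p V^\lambda\cM$. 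No induction, no ``base case at $\alpha \gg 0$'', no goodness of $F$ on $V^0$ is required.
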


A mixed Hodge module $M$ is \emph{pure of weight $d$} if ${\rm Gr}^W_i(M) = 0$ for all $i\neq d$. By definition, any pure Hodge module of weight $d$ decomposes into its \emph{strict support} decomposition. This means
\[ M = \bigoplus_{Z\subseteq X \times \A^1_t} M_Z\]
where $M_Z$ is a pure Hodge module of weight $d$ with strict support $Z \subseteq X \times \A^1_t$. Here $Z$ is an irreducible closed subset of $X \times \A^1_t$ and strict support means that the $\cD$-module underlying $M_Z$ admits no non-zero quotient or sub-object with support contained in a proper closed subset of $Z$. Equivalently, the underlying perverse sheaf is an intersection complex.

\begin{lem} \label{lem-strictSuppDMod} A $\cD_{X\times \A^1_t}$-module $\cM$ admits no non-zero sub-objects supported in $\{t=0\}$ if and only if the map
\[ t \colon {\rm Gr}_V^0(\cM) \to {\rm Gr}_V^1(\cM) \text{ is injective}.\]

It admits no non-zero quotient object supported in $\{t=0\}$ if and only if the map
\[ \de_t \colon {\rm Gr}_V^1(\cM) \to {\rm Gr}_V^0(\cM) \text{ is surjective,}\]
which holds if and only if $\cM = \cD_{X\times \A^1_t} \cdot V^{>0}\cM$.
\end{lem}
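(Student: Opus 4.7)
Both assertions rest on a careful analysis of how $t$ and $\de_t$ act on the graded pieces of the $V$-filtration. The key input is that condition \ref{cond-nilpotent} implies, on each ${\rm Gr}_V^\mu(\cM)$, that the operators $t\de_t$ and $\de_t t = t\de_t + 1$ act as $\mu - 1$ and $\mu$ respectively, modulo a nilpotent endomorphism. Therefore $t\colon {\rm Gr}_V^\lambda(\cM) \to {\rm Gr}_V^{\lambda+1}(\cM)$ and $\de_t\colon {\rm Gr}_V^{\lambda+1}(\cM) \to {\rm Gr}_V^\lambda(\cM)$ are isomorphisms whenever $\lambda \neq 0$. Combined with discreteness and condition \ref{cond-good}, this upgrades to the statement that $t\colon V^\beta\cM \to V^{\beta+1}\cM$ is an isomorphism for all $\beta > 0$.

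For the first assertion, the ($\Leftarrow$) direction is by contradiction: given a nonzero submodule supported on $\{t=0\}$, pick $v \neq 0$ with $tv = 0$, and let $\lambda$ be the largest element of $\Q$ with $v \in V^\lambda\cM$ (exists by discreteness and exhaustiveness). Then $[v] \neq 0 \in {\rm Gr}_V^\lambda(\cM)$ satisfies $t[v] = 0$; if $\lambda \neq 0$ this contradicts the isomorphism above, and if $\lambda = 0$ it contradicts the injectivity hypothesis. For ($\Rightarrow$), pick a nonzero $[v] \in \ker(t\colon {\rm Gr}_V^0 \to {\rm Gr}_V^1)$ and lift to $v \in V^0\cM$, so $tv \in V^{>1}\cM$; using $t\colon V^{>0}\cM \xrightarrow{\sim} V^{>1}\cM$, write $tv = tw$ with $w \in V^{>0}\cM$. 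Then $v - w$ is nonzero (as $v \notin V^{>0}\cM$) and annihilated by $t$, giving the desired nonzero submodule supported on $\{t=0\}$.

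For the second assertion, I would first establish the equivalence "no nonzero quotient on $\{t=0\}$ iff $\cM = \cD_{X\times \A^1_t} \cdot V^{>0}\cM$". Setting $\cN := \cM/\cD V^{>0}\cM$, the quotient $\cN$ is supported on $\{t=0\}$ because $V^{>0}\cM$ generates $\cM$ after inverting $t$ (via the iso $t\colon V^\beta \xrightarrow{\sim} V^{\beta+1}$ for $\beta > 0$). Conversely, if $\cM = \cD V^{>0}\cM$ and $\cQ$ is any quotient supported on $\{t=0\}$, then $V^{>0}\cQ = 0$ by \exampleref{eg-Kashiwara}; by \lemmaref{lem-VFiltStrict}, the image of $V^{>0}\cM$ in $\cQ$ equals $V^{>0}\cQ = 0$, so $\cQ = 0$.

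It then remains to show $\cM = \cD V^{>0}\cM$ iff $\de_t\colon {\rm Gr}_V^1 \to {\rm Gr}_V^0$ is surjective. Set $\cM' = \cD V^{>0}\cM$. The direction ($\Leftarrow$) is immediate: surjectivity of $\de_t$ gives $V^0\cM \subseteq V^{>0}\cM + \de_t V^1\cM \subseteq \cM'$, so by strictness $V^0(\cM/\cM') = 0$, forcing $\cM = \cM'$ by Kashiwara. For ($\Rightarrow$), the key step is the identity
\[ V^0\cM \cap \cM' = V^{>0}\cM + \de_t V^1\cM, \]
from which passage to ${\rm Gr}_V^0$ yields surjectivity of $\de_t$. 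The nontrivial inclusion here is the main obstacle: one writes $\cM' = \sum_{k \geq 0} \de_t^k V^{>0}\cM$ using the decomposition $\cD = \sum_k \de_t^k \cdot V^0\cD$ and the stability $V^0\cD \cdot V^{>0}\cM \subseteq V^{>0}\cM$, and then argues by descending induction on the highest power of $\de_t$ appearing, repeatedly using the graded isomorphism $\de_t\colon {\rm Gr}_V^{\mu+1} \xrightarrow{\sim} {\rm Gr}_V^\mu$ for $\mu \neq 0$ to force the top-order lifts to lie in $V^1\cM$. The delicate case is handling jumps of the $V$-filtration inside the open interval $(0,1)$, where careful use of discreteness is required.
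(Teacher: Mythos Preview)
The paper states this lemma without proof (it is a standard fact going back to Saito \cite{SaitoMHP}*{Lemme 3.1.3, Prop.\ 3.1.8}), so there is no argument to compare against. Your proof is correct in outline and in its essential ingredients.

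Two remarks that may sharpen it. First, in the $(\Leftarrow)$ direction of the second equivalence, once you have $V^0\cM \subseteq \cM'$ you can avoid invoking strictness and Kashiwara: the surjectivity of $\de_t \colon {\rm Gr}_V^{\lambda}(\cM) \to {\rm Gr}_V^{\lambda-1}(\cM)$ for all $\lambda \leq 0$ already gives $\cM = \sum_{k \geq 0} \de_t^k V^0\cM$, and this is contained in $\cM'$ since $\cM'$ is a $\cD$-submodule.

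Second, the ``delicate case'' in your key identity $V^0\cM \cap \cM' = V^{>0}\cM + \de_t V^1\cM$ admits a shorter treatment than descending induction on the top power of $\de_t$. Write $m = m_0 + \de_t u$ with $m_0 \in V^{>0}\cM$ and $u = \sum_{k \geq 1} \de_t^{k-1} m_k$; then $\de_t u = m - m_0 \in V^0\cM$. Now observe that $\de_t u \in V^0\cM$ forces $u \in V^1\cM$: if $u \in V^\beta\cM \setminus V^{>\beta}\cM$ with $\beta < 1$, then since $\de_t \colon {\rm Gr}_V^\beta(\cM) \to {\rm Gr}_V^{\beta-1}(\cM)$ is injective (as $\beta - 1 < 0$), we get $\de_t u \notin V^{>\beta-1}\cM \supseteq V^0\cM$, a contradiction. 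This single step replaces the inductive reduction and handles the jumps in $(0,1)$ automatically.
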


If $(\cM,F)$ underlies a Hodge module, then the morphisms in the previous lemma statement are automatically strict with respect to the Hodge filtration. This gives the following:

\begin{lem} \label{lem-strictSupportHodge} If $(\cM,F)$ underlies a Hodge module with no sub-object supported on $\{t=0\}$, then
\[ F_p V^0 \cM = V^0\cM \cap j_*(F_p j^*(\cM)).\]

If $(\cM,F)$ underlies a Hodge module with no quotient supported on $\{t=0\}$, then
\[ F_p\cM = \sum_{k \geq 0} \de_t^k(F_{p-k}V^{>0}\cM ).\]
\end{lem}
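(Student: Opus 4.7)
The plan is to extract both equalities from the preceding lemma (which gives $F_pV^\lambda\cM=V^\lambda\cM\cap j_*(F_pj^*\cM)$ for $\lambda>0$, and $F_p\cM=\sum_k\de_t^k F_{p-k}V^0\cM$ in general) by using the strict-support hypotheses to access the boundary index $\lambda=0$. The crucial Hodge-theoretic input in both parts is $F$-strictness of the induced maps on graded $V$-pieces: this goes beyond axioms \eqref{cond-tFiltIso}--\eqref{cond-dFiltIso}, which only handle $\alpha>0$ or $\alpha<0$, and reflects the fact that ${\rm Gr}_V^0\cM$ and ${\rm Gr}_V^1\cM$ underlie mixed Hodge modules (the unipotent vanishing and nearby cycles) and that the induced $t$ and $\de_t$ maps are morphisms in ${\rm MHM}$, hence strict with respect to $F$ by Saito's theory.

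For the first statement, given $u\in V^0\cM\cap j_*(F_pj^*\cM)$, multiplication by $t$ gives $tu\in V^1\cM\cap j_*(F_pj^*\cM)=F_pV^1\cM$ by the preceding lemma. Passing to the quotient, the class $\bar u\in{\rm Gr}_V^0\cM$ satisfies $\bar t(\bar u)\in F_p{\rm Gr}_V^1\cM$. \lemmaref{lem-strictSuppDMod} gives injectivity of $\bar t\colon{\rm Gr}_V^0\cM\hookrightarrow{\rm Gr}_V^1\cM$; combined with $F$-strictness, this forces $\bar u\in F_p{\rm Gr}_V^0\cM$. Picking a lift $v\in F_pV^0\cM$ and setting $w:=u-v\in V^{>0}\cM$, we get $tw=tu-tv\in F_pV^1\cM\cap V^{>1}\cM=F_pV^{>1}\cM$, where the last equality uses the preceding lemma at $\lambda=1$ and at the first jump $\lambda>1$. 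Axiom~\eqref{cond-tFiltIso} applied at such a jump (and discreteness) yields $t\colon F_pV^{>0}\cM\xrightarrow{\sim}F_pV^{>1}\cM$, so $w\in F_pV^{>0}\cM\subseteq F_pV^0\cM$, and hence $u\in F_pV^0\cM$, as desired.

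For the second statement, I run the dual argument. \lemmaref{lem-strictSuppDMod} provides surjectivity $\de_t\colon{\rm Gr}_V^1\cM\twoheadrightarrow{\rm Gr}_V^0\cM$, and the same strictness input (with the natural shift $F_{p-1}\to F_p$ under $\de_t$) gives $F_p{\rm Gr}_V^0\cM=\de_tF_{p-1}{\rm Gr}_V^1\cM$. Lifting, one finds $F_pV^0\cM\subseteq\de_tF_{p-1}V^1\cM+F_pV^{>0}\cM$ (the residual term lands in $F_p\cM\cap V^{>0}\cM=F_pV^{>0}\cM$ by the preceding lemma at $\lambda>0$); since $V^1\cM\subseteq V^{>0}\cM$, this upgrades to $F_pV^0\cM\subseteq\de_tF_{p-1}V^{>0}\cM+F_pV^{>0}\cM$. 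Substituting into the preceding lemma's formula $F_p\cM=\sum_{k\geq0}\de_t^kF_{p-k}V^0\cM$ and collecting terms yields $F_p\cM\subseteq\sum_{k\geq0}\de_t^kF_{p-k}V^{>0}\cM$; the reverse inclusion is immediate from $V^{>0}\cM\subseteq V^0\cM$.

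The main obstacle in both parts is the $F$-strictness of the boundary maps $\bar t\colon{\rm Gr}_V^0\cM\to{\rm Gr}_V^1\cM$ and $\bar{\de}_t\colon{\rm Gr}_V^1\cM\to{\rm Gr}_V^0\cM$, which is not formally forced by axioms \eqref{cond-tFiltIso}--\eqref{cond-dFiltIso}. For this I would cite Saito (\cite{SaitoMHP}) or the survey \cite{SchnellMHM}: the relevant point is that these boundary maps are, up to Tate twist, the canonical and variation morphisms fitting into Saito's structure on nearby and vanishing cycles of Hodge modules, and all morphisms in ${\rm MHM}$ are strict with respect to $F$.
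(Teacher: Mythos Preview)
Your proposal is correct and follows exactly the approach the paper indicates: the paper's entire argument is the sentence immediately preceding the lemma, ``the morphisms in the previous lemma statement are automatically strict with respect to the Hodge filtration,'' and you have simply written out what this means in detail. Your identification of the key input---that $F$-strictness of $\bar t$ and $\bar{\de}_t$ at the boundary index is \emph{not} contained in axioms \eqref{cond-tFiltIso}--\eqref{cond-dFiltIso} but follows because these are morphisms of mixed Hodge modules---is exactly right and matches the paper's (terse) reasoning.
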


\subsection{Relative Monodromy Filtration} An important construction in Hodge theory is the \emph{monodromy filtration} of a nilpotent operator $N$ on an object $A$ in an abelian category. Given such an operator, there exists a unique increasing filtration $W(N)_\bullet A$ such that $N W(N)_\bullet A \subseteq W(N)_{\bullet-2}(A)$ and with the property that
\[ N^i \colon {\rm Gr}_i^{W(N)}(A) \to {\rm Gr}_{-i}^{W(N)}(A)\]
is an isomorphism for all $i > 0$.

If instead $A$ is itself already filtered by sub-objects $L_\bullet A \subseteq A$ in the abelian category $\cA$, there is the notion of \emph{relative monodromy filtration} for any nilpotent operator $N$ such that $NL_\bullet \subseteq L_{\bullet}$.

The relative monodromy filtration $W(N,L)_\bullet A$ is the unique, increasing filtration with the property that $N W(N,L)_\bullet \subseteq W(N,L)_{\bullet-2}$ and so that, for all $k\in \Z$ and $i\in \Z_{>0}$, the map
\[ N^i\colon {\rm Gr}_{k+ i}^{W(N,L)}{\rm Gr}^L_k(A) \to {\rm Gr}_{k-i}^{W(N,L)} {\rm Gr}^L_k(A)\]
is an isomorphism.

Such a filtration need not always exist, though an inductive criterion for existence is given in \cite{SteenbrinkZucker} (see also \cite{SaitoMHM}*{Lem. 1.2} and \cite{Kashiwara}*{Lem. 3.1.1}).

One of the most useful observations in the theory of relative monodromy filtrations is due to Kashiwara \cite{Kashiwara}*{Thm. 3.2.9}, which allows one to obtain a \emph{canonical splitting} of the filtration induced by $L_\bullet$ on ${\rm Gr}^{W(N,L)}_k(A)$ for any $k\in \Z$. This splitting (with an alternative proof) is also given in \cite{SaitoMHM}*{Prop. 1.5}. In other words, there is a canonical isomorphism
\begin{equation} \label{eq-canonicalSplitting} L_i {\rm Gr}^{W(N,L)}_k(A) \cong \bigoplus_{j\leq i} {\rm Gr}^L_j {\rm Gr}^{W(N,L)}_k(A).\end{equation}

The above definitions and results hold in any exact category. This extension is important when considering (bi)-filtered $\cD$-modules $(\cM,F,W)$, though it makes the notation a bit cumbersome. For the details on this extension, consult \cite{SaitoMHM}*{Ch. 1}.

In the theory of mixed Hodge modules, another property required of any $(\cM,F,W)$ which underlies a mixed Hodge module is the following: for any $\lambda \in [0,1]$, the relative monodromy filtration on ${\rm Gr}_V^{\lambda}(\cM)$ with respect to the nilpotent operator $t\de_t - \lambda +1$ for the filtration
\[ L_\bullet {\rm Gr}_V^\lambda(\cM) = \begin{cases} {\rm Gr}_V^0(W_\bullet \cM) & \lambda = 0 \\ {\rm Gr}_V^{\lambda}(W_{\bullet+1}\cM) & \lambda \in (0,1] \end{cases}\]
should exist. The shift by 1 in the case $\lambda \in (0,1]$ is forced in the theory, see \cite{SaitoMixedSheaves}*{Prop. 6.11}. This relative monodromy filtration is then the weight filtration on ${\rm Gr}_V^\lambda(\cM)$ as a mixed Hodge module.

\subsection{Higher codimension V-filtrations} \label{subsect-HigherCodimV}
In this subsection, we review the results of \cites{CD,CDS} concerning Koszul-like complexes of higher codimension $V$-filtrations. For ease, we work always on $T = X\times \A^r_t$ with coordinates $t_1,\dots, t_r$ on $\A^r_t$. By the graph embedding trick, this is always the local situation. 

If $\cM$ is a $\cD_{X \times \A^r_t}$-module underlying a mixed Hodge module, then it admits a $V$-filtration along $t_1,\dots, t_r$. For details on this $V$-filtration, see \cites{BMS, CD}. 

The $V$-filtration is the unique exhaustive decreasing, discretely and left continuously $\Q$-indexed filtration $V^\bullet \cM$ such that
\begin{enumerate} \item $V^\chi \cM$ is coherent over $V^0 \cD_{X\times \A^r_t} = \cD_X[t_1,\dots, t_r]\langle t_i\de_{t_j} \mid i,j \in \{1,\dots, r\}\rangle$.

\item $(t_1,\dots, t_r) V^\chi \cM \subseteq V^{\chi+1}\cM$ for all $\chi \in \Q$, with equality for $\chi \gg 0$.

\item $\de_{t_i} V^\chi \cM \subseteq V^{\chi-1}\cM$ for all $i \in \{1,\dots, r\}$ and $\chi\in \Q$.

\item Let $\theta = \sum_{i=1}^r t_i \de_{t_i}$. Then the operator $\theta - \chi +r$ is nilpotent on 
\[{\rm Gr}_V^\chi(\cM) = V^\chi \cM/V^{>\chi}\cM,\] where $V^{>\chi} \cM = \bigcup_{\beta > \chi} V^\beta \cM$.
\end{enumerate}

\begin{rmk} As in the hypersurface case, we will also use the operator $s = \sum_{i=1}^r s_i$ where $s_i = -\de_{t_i} t_i$. As $s = -(\theta+r)$, we can restate the last condition as requiring that $s+\chi$ is nilpotent on ${\rm Gr}_V^\chi(\cM)$.
\end{rmk}

As in the introduction, we define
\[A^\chi(\cM) = \cx{V^\chi\cM \xrightarrow[]{t_i} \bigoplus_{i=1}^r V^{\chi+1}\cM \xrightarrow[]{t_i} \dots \xrightarrow[]{t_i} V^{\chi+r}\cM},\]
\[B^\chi(\cM) = \cx{{\rm Gr}_V^\chi(\cM) \xrightarrow[]{t_i} \bigoplus_{i=1}^r {\rm Gr}_V^{\chi+1}(\cM) \xrightarrow[]{t_i} \dots \xrightarrow[]{t_i} {\rm Gr}_V^{\chi+r}(\cM)}\]
in degrees $0,\dots, r$ and
\[C^\lambda(\cM) = \left[ {\rm Gr}_V^{r+\lambda}(\cM) \xrightarrow[]{\de_{t_i}} \bigoplus_{i=1}^r {\rm Gr}_V^{r+\lambda-1}(\cM) \xrightarrow[]{\de_{t_i}} \dots \xrightarrow[]{\de_{t_i}} {\rm Gr}_V^{\lambda}(\cM) \right]\]
in degrees $-r,\dots, 0$.

We let ${\rm Kosz}(\cM,t) = [ \cM \xrightarrow[]{t_i} \bigoplus_{i=1}^r \cM \xrightarrow[]{t_i} \dots \xrightarrow[]{t_i} \cM]$ denote the Koszul complex on $\cM$ for the elements $t_1,\dots ,t_r$. 

\begin{prop} \label{prop-KoszulVFilt} Let $\sigma \colon X\times \{0\} \to X\times \A^r_t$ be the inclusion of the zero section. We have quasi-isomorphisms $\sigma^!(\cM) = {\rm Kosz}(\cM,t) \cong A^0(\cM) \cong B^0(\cM)$.

In fact, for $\chi \neq 0$, the complexes $B^\chi(\cM)$ and $C^\chi(\cM)$ are acyclic and for $\chi > 0$, the complex $A^\chi(\cM)$ is acyclic.
\end{prop}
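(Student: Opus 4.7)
The plan is to derive all three parts from a single Koszul-type contracting homotopy. On $B^\chi(\cM)$, define $K \colon B^\chi(\cM)^{j+1} \to B^\chi(\cM)^j$ by $K = \sum_{i=1}^r \de_{t_i}\, \iota_{e_i}$; this is well-defined because $\de_{t_i} V^\alpha \subseteq V^{\alpha-1}$ induces a map ${\rm Gr}_V^{\chi+j+1}(\cM) \to {\rm Gr}_V^{\chi+j}(\cM)$. A direct commutator calculation using $[\de_{t_i}, t_k] = \delta_{ik}$ together with the wedge-contraction relation $e_j \wedge \iota_{e_i} + \iota_{e_i} e_j \wedge = \delta_{ij}$ yields $\{d, K\} = \sum_i t_i \de_{t_i} + (r - j) = \theta + r - j$ on degree $j$. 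By the $V$-filtration axiom, $\theta - (\chi+j) + r$ is nilpotent on ${\rm Gr}_V^{\chi+j}(\cM)$, so this operator acts on $B^\chi(\cM)^j$ as $\chi + N$ with $N$ nilpotent, and is invertible whenever $\chi \ne 0$; thus $K$ contracts $B^\chi(\cM)$.

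For the acyclicity of $A^\chi(\cM)$ with $\chi > 0$, I would apply the same homotopy $K$ directly to $A^\chi$, which is well-defined because $\de_{t_i} V^{\chi+j} \subseteq V^{\chi+j-1}$. The same commutator computation gives $\{d, K\} = \theta + r - j$ on degree $j$; this operator preserves the $V$-filtration on $V^{\chi+j}$ and acts on each graded piece ${\rm Gr}_V^{\chi+j+k}$ (with $k \geq 0$) as $\chi + k + N$ with $N$ nilpotent, which is invertible for $\chi > 0$. Combining this graded invertibility with the Hausdorffness property $\bigcap_{\chi'} V^{\chi'} = 0$ and the coherence of $V^{\chi+j}$ over the Noetherian ring $V^0\cD$, one upgrades to invertibility on $V^{\chi+j}$ itself, so that $K$ is a contracting homotopy on all of $A^\chi$. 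This upgrade from graded invertibility to invertibility on the total module is the main technical obstacle; everything else is formal.

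The two quasi-isomorphisms in the first assertion then follow by splicing short exact sequences. By discreteness of the $V$-filtration, choose $\epsilon > 0$ small enough that $V^{l + \epsilon} = V^{>l}$ for every $l = 0, \ldots, r$; then the short exact sequence of complexes $0 \to A^\epsilon(\cM) \to A^0(\cM) \to B^0(\cM) \to 0$, combined with acyclicity of $A^\epsilon$ from the preceding paragraph, yields $A^0(\cM) \cong B^0(\cM)$. For $A^0(\cM) \cong \sigma^!(\cM)$, I note that $\sigma^!(\cM) = {\rm Kosz}(\cM, t)$ contains $A^0(\cM)$ as a subcomplex whose quotient has $\cM/V^l$ in degree $l$; this quotient carries an exhaustive filtration by the subcomplexes $V^{\chi+l}/V^l$ indexed by $\chi < 0$, whose associated graded pieces are copies of $B^\chi(\cM)$ for $\chi < 0$, all acyclic by the first paragraph. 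A standard spectral sequence argument (converging because of the finiteness of jumps in any bounded interval) gives acyclicity of the quotient, so $A^0(\cM) \hookrightarrow \sigma^!(\cM)$ is a quasi-isomorphism.
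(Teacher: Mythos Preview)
Your argument for $B^\chi(\cM)$ via the homotopy $K=\sum_i\de_{t_i}\iota_{e_i}$ and the identity $dK+Kd=\theta+r-j$ is correct and is exactly the paper's proof (phrased there as ``$\theta-j+r$ is an automorphism of ${\rm Gr}_V^{\chi+j}(\cM)$'' and ``using the $\de_{t_i}$ maps \dots this automorphism is a null-homotopy''). Your short-exact-sequence deductions of $A^0\cong B^0$ and $A^0\cong{\rm Kosz}(\cM,t)$ are also correct; the paper phrases the second as the direct limit $A^0\hookrightarrow A^\chi$ as $\chi\to-\infty$, which is the same filtration argument.

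The gap is in your treatment of $A^\chi(\cM)$ for $\chi>0$. You need $T_j=\theta+r-j$ to be bijective on $V^{\chi+j}\cM$, and while injectivity follows from separatedness as you say, surjectivity does not follow from ``coherence over the Noetherian ring $V^0\cD_T$''. The image $T_j(V^{\chi+j}\cM)$ is not a $V^0\cD_T$-submodule, since $[T_j,t_i]=t_i\neq 0$, so Artin--Rees does not apply to it; and the general principle ``graded bijective $+$ separated $+$ finitely generated over Noetherian $\Rightarrow$ bijective'' is simply false (take $M=k[x]$ with the $x$-adic filtration and $\phi$ equal to multiplication by $1-x$: this is $k[x]$-linear and the identity on each graded piece, yet not surjective). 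One can even see the failure of your operator directly in the non-holonomic case: for $r=1$ and $\cM=\cD_{\A^1}$, the map $T_0=t\de_t+1=\de_t t$ is not surjective on $V^1\cD=t\,V^0\cD$ (the equation $\de_t P=t$ with $P\in V^2\cD$ forces an infinite recursion on coefficients), even though $A^\chi(\cD)$ is acyclic for $\chi>0$.

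The paper sidesteps this entirely. It proves acyclicity of $A^\chi(\cM)$ only for $\chi\gg 0$, by choosing a strict surjection $\bigoplus_I(\cD_T,V[\beta_i])\twoheadrightarrow(\cM,V)$ and reducing to the elementary case $\cM=\cD_T$, where the Koszul complex in $t_1,\dots,t_r$ is a direct computation. Acyclicity for every $\chi>0$ then follows from your acyclicity of the $B^\lambda$ via the short exact sequence $0\to A^{\chi_0}\to A^\chi\to A^\chi/A^{\chi_0}\to 0$ with $\chi_0\gg 0$, the quotient being a finite iterated extension of complexes $B^\lambda$ with $\lambda\in[\chi,\chi_0)$. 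You can repair your proof by keeping your homotopy argument for $B^\chi$ and replacing your direct attack on $A^\chi$ with this two-step reduction.
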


The main results of \cites{CD,CDS} are extensions of the results of the \propositionref{prop-KoszulVFilt} to include the Hodge and weight filtrations.

For $(\cM,F,W)$ a bi-filtered $\cD_{X\times \A^r_t}$-module underlying a mixed Hodge module, we define filtered complexes
\[F_p A^\chi(\cM) = \cx{F_{p+r} V^\chi\cM \xrightarrow[]{t_i} \bigoplus_{i=1}^r F_{p+r} V^{\chi+1}\cM \xrightarrow[]{t_i} \dots \xrightarrow[]{t_i} F_{p+r}V^{\chi+r}\cM},\]
\[F_p B^\chi(\cM) = \cx{F_{p+r}{\rm Gr}_V^\chi(\cM) \xrightarrow[]{t_i} \bigoplus_{i=1}^r F_{p+r}{\rm Gr}_V^{\chi+1}(\cM) \xrightarrow[]{t_i} \dots \xrightarrow[]{t_i} F_{p+r}{\rm Gr}_V^{\chi+r}(\cM)},\]
\[F_p C^\chi(\cM) = \cx{F_p{\rm Gr}_V^{r+\chi}(\cM) \xrightarrow[]{\de_{t_i}} \bigoplus_{i=1}^r F_{p+1}{\rm Gr}_V^{r+\chi-1}(\cM) \xrightarrow[]{\de_{t_i}} \dots \xrightarrow[]{\de_{t_i}} F_{p+r}{\rm Gr}_V^{\chi}(\cM)}.\]

Using Verdier specialization (as in Subsection \ref{subsect-Specializations} below), we can easily see that the relative monodromy filtration for $N = \theta - \chi +r$ on ${\rm Gr}_V^\chi(\cM)$ for the filtration $L_\bullet {\rm Gr}_V^\chi(\cM) = {\rm Gr}_V^{\chi}(W_\bullet \cM)$ exists. We denote it by $W_\bullet {\rm Gr}_V^\chi(\cM)$. 

We have the sub-complexes $L_\bullet B^0(\cM) = B^0(W_\bullet \cM) \subseteq B^0(\cM)$ and $L_\bullet C^0(\cM) = C^0(W_\bullet \cM) \subseteq C^0(\cM)$, and since the morphisms in $B^0(\cM)$ and $C^0(\cM)$ also commute with $N$ (in the obvious way), we see that these morphisms preserve the relative monodromy filtration. Thus, we get a weight filtration
\[W_\bullet B^0(\cM) = \cx{W_\bullet {\rm Gr}_V^0(\cM) \xrightarrow[]{t_i} \bigoplus_{i=1}^r W_\bullet {\rm Gr}_V^{1}(\cM) \xrightarrow[]{t_i} \dots \xrightarrow[]{t_i} W_\bullet {\rm Gr}_V^{r}(\cM)},\]
\[W_\bullet C^0(\cM) = \cx{W_\bullet {\rm Gr}_V^r(\cM) \xrightarrow[]{\de_{t_i}} \bigoplus_{i=1}^r W_\bullet {\rm Gr}_V^{r-1}(\cM) \xrightarrow[]{\de_{t_i}} \dots \xrightarrow[]{\de_{t_i}} W_\bullet {\rm Gr}_V^{0}(\cM)}.\]

With these filtrations on the complexes, we have the following results:
\begin{thm}[\cite{CD}*{Thm. 1.1, Thm. 1.2}, \cite{CDS}*{Thm. 1}]\label{thm-CDResults} The complexes
\[ A^\chi(\cM,F) \text{ and } B^\chi(\cM,F) \text{ are filtered acyclic for all } \chi > 0\]
and\[ C^\chi(\cM,F) \text{ is filtered acyclic for all } \chi < 0.\]

Moreover, we have filtered quasi-isomorphisms
\[ \sigma^!(\cM,F) \cong A^0(\cM,F) \cong B^0(\cM,F),\]
and the latter two complexes are strictly filtered. Similarly,
\[ \sigma^*(\cM,F) \cong C^0(\cM,F)\]
and the right-most complex is strictly filtered.

Finally, the filtrations $W_\bullet \cH^i B^0(\cM,F)$ and $W_\bullet \cH^i C^0(\cM,F)$ induced by 
$W_\bullet B^0(\cM,F)$ and $W_\bullet C^0(\cM,F)$ respectively satisfy
\[ {\rm Gr}^W_k \cH^i B^0(\cM,F) \cong {\rm Gr}^W_{k+i} \cH^i \sigma^!(\cM,F),\]
\[{\rm Gr}^W_k \cH^{-i} C^0(\cM,F) \cong {\rm Gr}^W_{k-i} \cH^{-i} \sigma^*(\cM,F),\]
as filtered $\cD_X$-modules underlying polarizable Hodge modules.
\end{thm}

We remark that the filtered complexes $(B^0(\cM),W), (C^0(\cM),W)$ need not be strict. However, the weight spectral sequence degenerates at $E_2$, see, for example \cite{SaitoMHC}*{Prop. 2.3}.

\section{Generalized V-filtrations} \label{sect-LV} Throughout this section, we work on $T= X\times \A^r_t$, and let $t_1,\dots, t_r$ be the coordinates on $\A^r_t$. 

We call a linear form $L(s) = \sum_{i=1}^r a_i s_i$ a \textit{slope} if it is non-zero and $a_i \in \Z_{\geq 0}$ for all $i$. It is non-degenerate if $a_i\neq 0$ for all $i$. Given a slope $L$, we obtain a $\Z$-indexed filtration on $\cD_T$ by
\[ {}^L V^j \cD_T = \left\{\sum_{\beta,\gamma} P_{\beta,\gamma} t^\beta \de_t^\gamma \mid P_{\beta,\gamma} \in \cD_X, L(\beta) \geq L(\gamma) + j.\right\}\]

If $\cM$ is a module over $\cD_T$, we say that a $\Z$-indexed filtration $U^\bullet \cM$ is \emph{compatible} if \[{}^L V^j \cD_T \cdot U^k \cM \subseteq U^{k+j}\cM,\]
for example, the filtration ${}^L V^\bullet \cD_T$ is compatible (in other words, it is a multiplicative filtration).

We define the \emph{Rees ring} $R_L(\cD_T) = \bigoplus_{k \in \Z} {}^L V^k \cD_T u^{-k}$. It is a $\Z$-graded ring, by the multiplicative property of the filtration. Given a module $\cM$ with a compatible filtration $U^\bullet \cM$, we define the \emph{Rees module} $R_U(\cM) = \bigoplus_{k\in \Z} U^k \cM u^{-k}$. The filtration $U^\bullet \cM$ is \emph{good} if $R_U(\cM)$ is coherent over $R_L(\cD_T)$. We will also say that $(\cM,U)$ is a good filtered $(\cD_T, {}^L V)$-module in this case.

For a filtered module $(\cM,U)$ and $k\in \Z$, we set $(\cM,U[k])$ for the filtration with $U[k]^\bullet \cM = U^{\bullet-k} \cM$. The following is immediate.
\begin{lem} An exhaustive filtration $U^\bullet \cM$ is good if and only if there exist $m_1,\dots, m_N$ and $k^{(1)},\dots, k^{(N)} \in \Z$ such that we have
\[U^\bullet \cM = \sum_{i=1}^N {}^L V^{\bullet - k^{(i)}} \cD_T \cdot m_i.\]

Equivalently, we have a strict surjection $\bigoplus_{i=1}^N (\cD_T,{}^L V[k^{(i)}]) \to (\cM,U)$.
\end{lem}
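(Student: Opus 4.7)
The plan is to set up the Rees-module dictionary and then translate. Let $R = R_L(\cD_T) = \bigoplus_{j\in\Z} {}^LV^j\cD_T\, u^{-j}$; for a compatibly filtered $(\cM,U)$, the Rees module $R_U(\cM) = \bigoplus_{k\in\Z} U^k\cM\, u^{-k}$ is naturally a $\Z$-graded module over the $\Z$-graded ring $R$. Goodness of $U^\bullet\cM$ is, by definition, coherence of $R_U(\cM)$ over $R$; granting Noetherianness of $R$ (standard, reducing as in the usual $V$-filtration case to Noetherianness of ${}^LV^0\cD_T$ together with finite-type generation of $R$ over it by the shifted operators $t_i u^{-a_i}$ and $\de_{t_i} u^{a_i}$), this is equivalent to finite generation.

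With that translation, the backward direction is immediate: a presentation $U^k\cM = \sum_i {}^LV^{k-k^{(i)}}\cD_T \cdot m_i$ is exactly the assertion, collected over all $k$, that the homogeneous elements $m_i u^{-k^{(i)}}$ generate $R_U(\cM)$ over $R$. For the forward direction, I would start with any finite generating set for $R_U(\cM)$ over $R$, pass to its homogeneous components (which still generate, since $R$ is graded and acts on the graded module $R_U(\cM)$), and write them as $m_i u^{-k^{(i)}}$ with $m_i \in U^{k^{(i)}}\cM$. Extracting the degree-$k$ part of an arbitrary homogeneous combination $\sum_i P_i \cdot m_i u^{-k^{(i)}}$ with $P_i \in R$ recovers the claimed formula for $U^k\cM$.

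Finally, the ``equivalently'' assertion is a matter of unwinding definitions. The filtered map $\varphi\colon \bigoplus_i (\cD_T,{}^LV[k^{(i)}]) \to (\cM,U)$ sending the $i$-th standard generator to $m_i$ has $k$-th filtered piece $\bigoplus_i {}^LV^{k-k^{(i)}}\cD_T$, whose image in $\cM$ is exactly $\sum_i {}^LV^{k-k^{(i)}}\cD_T \cdot m_i$. Strictness asserts that this image equals $U^k\cM$, while exhaustiveness of $U^\bullet$ ensures surjectivity of $\varphi$ on the underlying $\cD_T$-modules, so the two formulations are identified. There is no real obstacle here; the only step with any conceptual content is the Noetherianness of $R$, and everything else is routine bookkeeping on the Rees side.
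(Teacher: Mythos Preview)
Your argument is correct and is precisely the Rees-module translation that the paper has in mind; the paper itself simply declares the lemma ``immediate'' and gives no written proof, so there is nothing to compare against beyond noting that your expansion matches the intended reasoning. One small remark: only the backward implication (finite generation $\Rightarrow$ coherence) actually uses Noetherianness of $R_L(\cD_T)$, and the paper establishes this fact slightly later via the identification with $\cD_{\widetilde{T}^L/\A^1_u}$ rather than by the direct argument you sketch; either route is fine.
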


From this, we get comparability of good filtrations:
\begin{lem} \label{lem-goodFiltComparable} Let $U_1^\bullet \cM, U_2^\bullet \cM$ be two good filtrations of the $\cD_T$-module $\cM$. Then there exists $k \in \Z_{\geq 0}$ such that
\[ U^{\bullet + k}_1 \cM \subseteq U_2^\bullet \cM \subseteq U_1^{\bullet-k}\cM.\]
\end{lem}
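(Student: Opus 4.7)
The plan is to reduce to the case where one of the filtrations is of the form ${}^L V^\bullet \cD_T$ acting on finitely many generators, using the preceding characterization of good filtrations. Concretely, by the lemma just above, write
\[ U_1^\bullet \cM = \sum_{i=1}^N {}^L V^{\bullet - k^{(i)}}\cD_T \cdot m_i,\]
for elements $m_i \in \cM$ and integers $k^{(i)}$. Since $U_2^\bullet$ is exhaustive and there are only finitely many generators, I can pick integers $\ell_i$ such that $m_i \in U_2^{\ell_i}\cM$ for each $i$.

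The compatibility condition ${}^L V^j \cD_T \cdot U_2^{\ell}\cM \subseteq U_2^{\ell+j}\cM$ then yields
\[ {}^L V^{p - k^{(i)}}\cD_T \cdot m_i \subseteq U_2^{p - k^{(i)} + \ell_i}\cM \quad \text{for every } p \in \Z.\]
Taking $k_0 = \max_i (k^{(i)} - \ell_i)$, the relations $p - k^{(i)} + \ell_i \geq p - k_0$ and the fact that $U_2^\bullet \cM$ is decreasing give $U_2^{p-k^{(i)}+\ell_i}\cM \subseteq U_2^{p - k_0}\cM$. Summing over $i$ produces $U_1^p \cM \subseteq U_2^{p - k_0}\cM$ for all $p$, which is equivalent to $U_1^{\bullet + k_0}\cM \subseteq U_2^\bullet \cM$.

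Swapping the roles of $U_1$ and $U_2$ and running the identical argument produces an integer $k_1$ with $U_2^{\bullet + k_1}\cM \subseteq U_1^\bullet \cM$, equivalently $U_2^\bullet \cM \subseteq U_1^{\bullet - k_1}\cM$. Setting $k = \max(k_0, k_1)$ (and noting both filtrations are decreasing, so enlarging $k$ preserves both inclusions) gives the desired bound. There is no real obstacle here; the only subtle point is keeping track of the sign conventions (the filtration is decreasing in the superscript, since the condition $L(\beta) \geq L(\gamma) + j$ defining ${}^L V^j \cD_T$ tightens as $j$ grows), so it is important to verify that the inequality $p - k^{(i)} + \ell_i \geq p - k_0$ translates into the correct containment $U_2^{p-k^{(i)}+\ell_i}\cM \subseteq U_2^{p-k_0}\cM$.
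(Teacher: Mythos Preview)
Your proof is correct and is precisely the standard argument the paper has in mind; indeed, the paper does not spell out a proof at all but merely signals with ``From this, we get comparability of good filtrations'' that the result follows directly from the preceding description of good filtrations via a strict surjection from a finite sum of shifted copies of $(\cD_T,{}^L V)$. Your write-up fills in exactly those details.
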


The following can be thought of as the analogue of the Artin-Rees lemma:
\begin{lem} \label{lem-ArtinRees} The ring $R_L(\cD_T)$ is Noetherian. Thus, if $(\cM,U)$ is a good filtered $(\cD_T,{}^L V)$-module and $\cN \subseteq \cM$ is a sub $\cD_T$-module, the induced filtration
\[ U^\bullet \cN = \cN \cap U^\bullet \cM \text{ is good}.\]
\end{lem}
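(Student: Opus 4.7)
The plan is to establish Noetherianity of $R_L(\cD_T)$ first, and then to deduce the induced-filtration (Artin--Rees) statement as a formal consequence.

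\textbf{Noetherianity.} I would equip $R_L(\cD_T)$ with a secondary filtration coming from the order filtration $F_\bullet \cD_T$ on differential operators. Set
\[ F_k R_L(\cD_T) \;=\; \bigoplus_{j\in\Z} \bigl(F_k\cD_T \cap {}^L V^j\cD_T\bigr)\, u^{-j}. \]
Since the order filtration and ${}^L V^\bullet \cD_T$ are each multiplicative and jointly compatible, this defines a filtration of $R_L(\cD_T)$ by subgroups whose associated graded is naturally identified with $R_L(\mathrm{gr}^F \cD_T)$. Here the ${}^L V$-filtration on $\mathrm{gr}^F \cD_T \cong \Sym\, T_T$ is the one for which $\sigma(t_i)$ has weight $a_i$, $\sigma(\de_{t_i})$ has weight $-a_i$, and the symbols of vector fields on $X$ have weight $0$.

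\textbf{Finite generation on the graded side.} Working locally in coordinates, introduce $T_i = t_i u^{-a_i}$ and $D_i = \sigma(\de_{t_i})\, u^{a_i}$. Any Rees monomial $t^\beta \sigma(\de_t)^\gamma u^{-j}$ with $L(\beta) - L(\gamma) \geq j$ factors as
\[ t^\beta \sigma(\de_t)^\gamma u^{-j} \;=\; \Bigl(\prod_{i=1}^r T_i^{\beta_i} D_i^{\gamma_i}\Bigr)\, u^{L(\beta) - L(\gamma) - j}, \]
where the exponent on $u$ is non-negative. Consequently $R_L(\mathrm{gr}^F \cD_T)$ is a commutative, finitely generated algebra over $\Sym\, T_X$ (generated by $T_1,\dots, T_r,\, D_1,\dots, D_r$, and $u$), and is therefore Noetherian. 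By the standard lifting principle for Noetherianity of filtered rings from their associated graded (see, e.g., \cite{HTT}*{App.~D}), $R_L(\cD_T)$ is itself left and right Noetherian.

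\textbf{Induced filtration.} Goodness of $(\cM, U)$ is precisely the statement that $R_U(\cM)$ is a finitely generated, hence Noetherian, $R_L(\cD_T)$-module. The graded subgroup $\bigoplus_k (\cN \cap U^k \cM)\, u^{-k}$ is stable under $R_L(\cD_T)$, because the $\cD_T$-action preserves $\cN$. It is therefore finitely generated over $R_L(\cD_T)$, which is exactly the assertion that $\cN \cap U^\bullet \cM$ is a good filtration. The only delicate step is the identification $\mathrm{gr}^F R_L(\cD_T) \cong R_L(\mathrm{gr}^F \cD_T)$ together with the explicit finite list of generators; once that is in place, the remainder is formal.
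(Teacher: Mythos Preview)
Your argument is correct. The paper's proof is closely related but packaged differently: rather than filtering $R_L(\cD_T)$ by order and identifying the associated graded as a polynomial ring, the paper lifts your symbol-level change of variables to the non-commutative level and shows directly that
\[
R_L(\cD_T)\;\cong\;\cD_{\widetilde{T}^L/\A^1_u},
\]
the ring of \emph{relative} differential operators on the deformation space $\widetilde{T}^L = X\times \A^r_{t/u^a}\times \A^1_u$ (this is \lemmaref{lem-LArtinRees}); Noetherianity then follows because relative differential operator rings are Noetherian. Your substitutions $T_i = t_i u^{-a_i}$ and $D_i = \sigma(\de_{t_i}) u^{a_i}$ are exactly the symbols of the paper's coordinates $z_i = t_i/u^{a_i}$ and $\de_{z_i} = u^{a_i}\de_{t_i}$, so at bottom the two proofs are the same computation. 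The payoff of the paper's formulation is that the filtered isomorphism $(\cD_{\widetilde{T}^L/\A^1_u},F)\cong (R_L(\cD_T),F)$ is reused later in the specialization constructions (e.g.\ \propositionref{prop-VFiltLSpec} and \lemmaref{lem-EventuallyFiltAcyclic}); your more elementary route is self-contained and avoids introducing the deformation space at this stage.
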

\begin{proof} The second claim is immediate from the first. The first follows from \lemmaref{lem-LArtinRees} below, which we postpone until we discuss specialization constructions.
\end{proof}

\begin{defi} Let $(\cM,U)$ be a good filtered $(\cD_T,{}^L V)$-module. The $b$-function for $U^\bullet \cM$ is the monic polynomial $p(w) \in \C[w]$ of least degree (if some such non-zero polynomial exists) such that
\begin{equation} \label{cond-Lspecializable} p(L(s) + k) U^k \cM \subseteq U^{k+1}\cM,\end{equation}
where $L(s) = \sum_{i=1}^r -a_i \de_{t_i} t_i$. 

We say $(\cM,U)$ is \emph{specializable} if it admits a $b$-function. For any subfield $A\subseteq \C$, we say $(\cM,U)$ is $A$-specializable if the $b$-function splits into linear factors over $A$.
\end{defi}

\lemmaref{lem-goodFiltComparable} can be used to show the following, which says that being $A$-specializable is a property of the module, not the filtration:
\begin{prop} If $(\cM,U)$ is a good filtered $(\cD_T,{}^L V)$-module which is $A$-specializable, then any other good filtration is also $A$-specializable.
\end{prop}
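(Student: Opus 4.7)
The plan is to directly exploit the comparability of good filtrations provided by \lemmaref{lem-goodFiltComparable} and to construct a $b$-function for any other good filtration as a product of integer shifts of the original $b$-function. Since any subfield $A\subseteq \C$ automatically contains $\Q$, integer shifts preserve the property of splitting over $A$, so such a construction will automatically stay in $A[w]$.

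Concretely, let $U'^\bullet \cM$ be another good filtration. By \lemmaref{lem-goodFiltComparable} there exists $c \in \Z_{\geq 0}$ with
\[ U^{\bullet+c}\cM \subseteq U'^\bullet\cM \subseteq U^{\bullet-c}\cM. \]
Let $p(w) \in \C[w]$ be the $b$-function of $(\cM,U)$, which by hypothesis factors into linear terms over $A$. I would then introduce the polynomial
\[ q(w) = \prod_{j=0}^{2c} p(w + j - c), \]
which again factors into linear terms over $A$, and show that $q(L(s)+k)\,U'^k\cM \subseteq U'^{k+1}\cM$ for every $k \in \Z$.

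The verification is a straightforward iteration: starting from the inclusion $U'^k\cM \subseteq U^{k-c}\cM$, each successive factor $p(L(s)+k-c+j)$ for $j = 0,1,\ldots, 2c$ advances the $U$-level by one by the defining property of the $b$-function $p$ for $(\cM,U)$, and we land in
\[ q(L(s)+k)\,U'^k\cM \subseteq U^{k+c+1}\cM \subseteq U'^{k+1}\cM. \]
Minimality of the $b$-function for $(\cM,U')$ then forces it to divide $q(w)$, hence it too splits into linear factors over $A$.

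The only bookkeeping point to be careful about is that $L(s) = -\sum_i a_i \de_{t_i} t_i$ lies in ${}^L V^0 \cD_T$, so it preserves every good filtration and the operators $p(L(s)+k-c+j)$ commute with one another, making the iterated application unambiguous. There is no real obstacle here; the proposition is essentially a formal consequence of the Artin--Rees type comparison provided by \lemmaref{lem-goodFiltComparable} together with the observation that $\Q \subseteq A$.
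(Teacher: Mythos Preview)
Your argument is correct and is precisely the standard fleshing-out of what the paper only sketches: the paper merely states that \lemmaref{lem-goodFiltComparable} ``can be used to show'' the proposition, and your product-of-shifts construction $q(w)=\prod_{j=0}^{2c}p(w+j-c)$ is exactly how one carries this out. The bookkeeping remarks about $L(s)\in {}^L V^0\cD_T$ and $\Q\subseteq A$ are accurate and complete the verification.
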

\begin{proof} Let $U_1^\bullet \cM, U_2^\bullet \cM$ be two good filtrations and assume $U_1^\bullet$ is $A$-specializable with $b$-function $b(w) \in A[w]$ (splitting completely over $A$). By \lemmaref{lem-goodFiltComparable}, there exists $k$ such that $U_1^{\bullet+k} \cM \subseteq U_2^\bullet \cM \subseteq U_1^{\bullet-k} \cM$.

Then
\[ b(L(s) + (j+k)) \dots b(L(s) + (j-k)) U_2^j \subseteq b(L(s) + (j+k)) \dots b(L(s) + (j-k)) U_1^{j-k}\]
but the right hand side, by definition of the $b$-function, is contained in $U_1^{j+1+k} \subseteq U_2^{j+1}$ hence $U_2^{\bullet}$ admits a $b$-function which divides the product $b(w + k) \dots b(w-k)$. Thus, $U_2^\bullet$ is $A$-specializable as that product splits completely over $A$.
\end{proof}

\lemmaref{lem-ArtinRees} shows that if $\cN\subseteq \cM$ is a submodule and $\cM$ is $A$-specializable, then $\cN$ is, too. This applies in particular to $\cD_T\cdot u \subseteq \cM$ for any element $u\in \cM$, which leads to the following definition.

\begin{defi} Let $\cM$ be $A$-specializable. For any $u \in \cM$, the $b$-function of $u$ is the monic polynomial of least degree $b(w) \in \C[w]$ such that
\[ b(L(s)) u \in {}^L V^1\cD_T \cdot u,\]
which we denote by $b_u(w)$. Such a polynomial exists (and splits over $A$) for any section $u\in \cM$ of an $A$-specializable module.
\end{defi}

For the remainder, we assume $\cM$ is $\Q$-specializable, though the same constructions can be made with $A = \R$. Given $u\in \cM$, let $b_u(w)$ be the $b$-function of $u$, which we factor as
\[ b_u(w) = (w+\gamma_1)\dots (w+\gamma_N),\]
with $\gamma_1 \leq \dots \leq \gamma_N$. Then define the \emph{$L$-order} of $u$ to be ${\rm ord}_L(u) = \gamma_1$.

This leads to a $\Q$-indexed filtration ${}^L V^\bullet \cM$ defined by
\[ {}^L V^\lambda \cM = \{u\in \cM \mid {\rm ord}_L(u)\geq \lambda\},\]
whose $\Z$-indexed part is characterized by the following proposition.

\begin{prop} \label{prop-orderFilt} Let $\cM$ be $\Q$-specializable. Then there exists a unique good filtration $U^\bullet \cM$ whose $b$-function satisfies the following: $b_U(-\gamma) = 0 \implies \gamma \in [0,1)$.

Moreover, for all integers $j\in \Z$, this filtration satisfies
\[ {}^L V^j \cM = U^j \cM.\]
\end{prop}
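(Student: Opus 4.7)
The plan is to prove existence of $U^\bullet$, uniqueness, and the identification $U^j\cM = {}^L V^j\cM$ (for $j\in\Z$) in that order, using the Artin--Rees lemma (\lemmaref{lem-ArtinRees}) and comparability of good filtrations (\lemmaref{lem-goodFiltComparable}) throughout.

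For \emph{existence}, I would start from any good filtration $U_0^\bullet\cM$ (one exists because $\cM$ is $\Q$-specializable), with $b$-function factored as $b_{U_0}(w) = \prod_i(w+\gamma_i)^{m_i}$, $\gamma_i\in\Q$. The plan is to induct on a suitable well-founded invariant, e.g.\ $d(U_0) = \sum_i m_i\bigl(\max(0,\gamma_i - 1) + \max(0,-\gamma_i)\bigr)$ together with the multiplicity of integer roots, which vanishes exactly when the roots $-\gamma_i$ lie in $(-1,0]$. The inductive step uses two elementary modifications: if $\gamma \geq 1$ is a root, replace $U_0^k\cM$ by $U_0^k\cM + (L(s)+k-1+\gamma)\,U_0^{k-1}\cM$; if $\gamma < 0$, replace $U_0^k\cM$ by $\{u\in U_0^{k-1}\cM : (L(s)+k-1+\gamma)u \in U_0^k\cM\}$. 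Goodness is preserved thanks to \lemmaref{lem-ArtinRees}, and a direct calculation shows the factor $(w+\gamma)$ is replaced by $(w+\gamma \mp 1)$, strictly reducing the invariant.

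For \emph{uniqueness}, suppose $U^\bullet$ and $\widetilde U^\bullet$ are two such good filtrations. By \lemmaref{lem-goodFiltComparable}, there is $N$ with $U^{k+N}\cM \subseteq \widetilde U^k\cM \subseteq U^{k-N}\cM$. The crux is the following: for any $u\in U^k \cap \widetilde U^{k-j}\cM$ with $j\geq 1$, one has $u \in U^{k+1}\cM + \widetilde U^{k-j+1}\cM$. Indeed, $b_U(L(s)+k)u \in U^{k+1}\cM$ and $b_{\widetilde U}(L(s)+k-j)u\in \widetilde U^{k-j+1}\cM$; since both $b$-functions have roots in $(-1,0]$, the polynomials $b_U(w+k)$ and $b_{\widetilde U}(w+k-j)$ in $\C[w]$ have disjoint root sets whenever $j\geq 1$ (their roots lie in translates of $(-1,0]$ separated by $j$). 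Bezout, combined with the key observation $L(s) \in {}^L V^0\cD_T$ (so polynomials in $L(s)$ preserve filtration levels), yields the claim. Applying it repeatedly to decompose $u$ and using the comparability bound $N$ to absorb the $U^{\bullet+M}$ pieces into $\widetilde U^{k-j+1}$ forces $U^k = \widetilde U^k$. For the matching $U^j = {}^L V^j$, apply \lemmaref{lem-ArtinRees} to restrict $U^\bullet$ to $\cD_T\cdot u$: the induced good filtration has $b$-function still rooted in $(-1,0]$. By the uniqueness just proved, applied to $\cD_T\cdot u$ as its own module, this filtration is determined by $b_u$; reading off the filtration level containing the generator $u$ translates $u \in U^j\cM$ into $\ord_L(u) \geq j$.

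The main obstacle I expect is the uniqueness step: the Bezout argument is conceptually clean but requires the terminating induction on $j$ to be carried out inside the comparability window, and one must verify carefully that $L(s)\in {}^L V^0\cD_T$ so that the Bezout coefficients $P(L(s))$, $Q(L(s))$ do not violate the filtration. The matching step is also subtle because $\ord_L(u)$ is defined elementwise via the individual polynomial $b_u$, while $U^\bullet$ is a global filtration; the reconciliation is forced by the Artin--Rees passage to $\cD_T\cdot u$ combined with the uniqueness already established.
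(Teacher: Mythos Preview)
The paper does not give a proof of this proposition; it is stated as a known fact, implicit in Sabbah's original construction of the ${}^L V$-filtration. Your approach is exactly the classical one (Kashiwara for the hypersurface $V$-filtration, Sabbah for general slopes): existence via the root-shifting modifications, uniqueness via the B\'ezout/coprimality argument. So there is nothing to compare against, and your overall strategy is correct.

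Your existence and uniqueness sketches are fine. The uniqueness argument runs cleanly as a descending induction on $j$: if $U^{k'}\subseteq\widetilde U^{k'-j}$ for all $k'$ and some $j\geq 1$, then for $u\in U^k$ the B\'ezout decomposition gives $u=u_1+v$ with $u_1\in U^{k+1}$, $v\in\widetilde U^{k-j+1}$, and the inductive hypothesis applied to $u_1$ already places $u_1\in\widetilde U^{k-j+1}$ as well; hence $U^k\subseteq\widetilde U^{k-j+1}$.

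The matching step is where your sketch is too thin. The phrase ``this filtration is determined by $b_u$'' is not right as stated: the canonical filtration on $\cD_T\cdot u$ is \emph{not} the filtration $G^\bullet={}^LV^\bullet\cD_T\cdot u$ (whose $b$-function is $b_u$), and passing from $G^\bullet$ to the canonical filtration via the root-shifting procedure does not transparently track the level of the generator $u$. More seriously, you only sketch the implication $u\in U^j\Rightarrow\ord_L(u)\geq j$; the converse needs its own argument. A clean way to do both directions: for the forward direction, if $u\in U^j$ then $\prod_{i=0}^{M-1}b_U(L(s)+j+i)\,u\in U^{j+M}$, and for $M$ large comparability on $\cD_T\cdot u$ forces this product into $G^1={}^LV^1\cD_T\cdot u$; all roots of this product have $\gamma\geq j$, so $b_u$ divides it and $\ord_L(u)\geq j$. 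For the converse, suppose $u\in U^{j_0}\setminus U^{j_0+1}$ with $j_0<j\leq\ord_L(u)$. Then $b_u(L(s))u\in{}^LV^1\cD_T\cdot u\subseteq U^{j_0+1}$ (using ${}^LV^1\cD_T\cdot U^{j_0}\subseteq U^{j_0+1}$), and the root sets of $b_u(w)$ (lying in $(-\infty,-j_0-1]$) and of $b_U(w+j_0)$ (lying in $(-j_0-1,-j_0]$) are disjoint; B\'ezout then gives $u\in U^{j_0+1}$, a contradiction. This is the same coprimality mechanism as in your uniqueness proof, so it is worth noting explicitly that it does double duty here.
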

\begin{proof} This is standard, for example, the same argument is given in \cite{MHMProj}*{Lem. 9.3.16}. See also \cite{Sabbah1}*{Pg. 307}.
\end{proof}

\begin{rmk} \label{rmk-refineQIndexed} The $\Z$-indexed filtration in the proposition statement can be refined in the following way to a $\Q$-indexed filtration, which agrees with the $\Q$-indexed filtration ${}^L V^\bullet \cM$. The main idea is to lift generalized eigenspaces of the operator $L(s)$ on the associated graded pieces ${\rm Gr}_U^j(\cM)$.

To be precise, let $U^\bullet \cM$ be a good $\Z$-indexed filtration whose $b$-function $b_U(w)$ satisfies $b_U(-\gamma) = 0$ implies $\gamma \in [0,1)$. Write $b_U(w) = (w+\gamma_1)^{m_1}\dots (w+\gamma_N)^{m_N}$ for $0 \leq \gamma_1\leq \gamma_2\leq \dots \leq \gamma_N < 1$. 

For any $j\in \Z$, define $U^{\gamma_N +j}\cM = \{u \in \cM \mid (L(s)+\gamma_N +j)^{m_N}u \in U^{j+1}\cM\}$. Inductively, we then define
\[ U^{\gamma_i +j}\cM = \{u\in \cM \mid (L(s)+\gamma_i +j)^{m_i} u \in U^{j+\gamma_{i+1}}\cM\}.\]

For any $\chi \in \Q$, set $j = \lfloor \chi \rfloor$ and $\varepsilon = \chi - j \in [0,1)$. First, if $\varepsilon \leq \gamma_N$, let $i$ be minimal such that $\varepsilon \leq \gamma_i$ and set $U^\chi \cM = U^{j+\gamma_i}\cM$. Otherwise, if $\varepsilon > \gamma_N$, set $U^\chi \cM = U^{j+1+\gamma_1}\cM$.

It is an easy exercise to see that, in this case, $U^\bullet \cM$ is a decreasing, discrete and left-continuous $\Q$-indexed filtration.
\end{rmk}

We call ${}^L V^\bullet \cM$ ``the (canonical) ${}^L V$-filtration" of $\cM$. For example, for any $I \subseteq \{1,\dots, r\}$, if $L = \sum_{i\in I} s_i$, this filtration is the Kashiwara-Malgrange $V$-filtration of $\cM$ along the subvariety $V(t_i\mid i\in I)$.

Define Koszul-like complexes
\[ A_L^\gamma(\cM) = \cx{{}^L V^\gamma \cM \xrightarrow[]{t_i} \bigoplus_{i=1}^r {}^L V^{\gamma+a_i}\cM \xrightarrow[]{t_i} \dots \xrightarrow[]{t_i} {}^L V^{\gamma+|L|}\cM}\]
\[ B_L^\gamma(\cM) = \cx{{\rm Gr}_L^\gamma (\cM) \xrightarrow[]{t_i} \bigoplus_{i=1}^r {\rm Gr}_L^{\gamma+a_i}(\cM) \xrightarrow[]{t_i} \dots \xrightarrow[]{t_i} {\rm Gr}_L^{\gamma+|L|}(\cM)}\]
placed in degree $0,1,\dots,r$. Define also
\[ C_L^\gamma(\cM) = \cx{{\rm Gr}_L^{\gamma+|L|} (\cM) \xrightarrow[]{\de_{t_i}} \bigoplus_{i=1}^r {\rm Gr}_L^{\gamma+|L|-a_i}(\cM) \xrightarrow[]{\de_{t_i}}  \dots \xrightarrow[]{\de_{t_i}}  {\rm Gr}_L^{\gamma}(\cM)}\]
placed in degree $-r,-r+1,\dots,0$.

The following can be shown in the same way as \propositionref{prop-KoszulVFilt} above, we give a sketch of the argument for the reader's convenience.

\begin{lem} \label{lem-DModuleRestriction} Let $\cM$ be $\Q$-specializable. Then the complexes $B^\chi_L(\cM)$ and $C^\chi_L(\cM)$ are acyclic for all $\chi \neq 0$. Moreover, $A^\chi_L(\cM)$ is acyclic for all $\chi > 0$.

Let $\cM$ admit an ${}^L V$-filtration. Then
\[ \sigma^!(\cM) \cong A^0_L(\cM) \cong B^0_L(\cM)\]
for any slope $L$.
\end{lem}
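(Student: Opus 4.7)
The plan is to mimic the proof of \propositionref{prop-KoszulVFilt} while carefully tracking the weights $a_i$ of the slope $L$. Let me write $|L|_I := \sum_{i \in I} a_i$ for $I \subseteq \{1,\ldots,r\}$, so that the summand of $B_L^\chi(\cM)$ in cohomological degree $p$ is $\bigoplus_{|I| = p} {\rm Gr}_L^{\chi + |L|_I}(\cM)$. The identity $L(s) = -L(t\de_t) - |L|$ converts nilpotence of $L(s) + \lambda$ on ${\rm Gr}_L^\lambda(\cM)$ into nilpotence of $L(t\de_t) + |L| - \lambda$. Consequently, on ${\rm Gr}_L^{\chi + |L|_I}(\cM)$ the operator $L(t\de_t) + |L| - |L|_I$ equals $\chi + (\text{nilpotent})$, hence is invertible for $\chi \ne 0$. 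Using $[L(t\de_t), t_i] = a_i t_i$, these piece-wise operators assemble into a chain endomorphism $\Phi$ of $B_L^\chi(\cM)$ which is an automorphism when $\chi \ne 0$. I would exhibit the null-homotopy as the \emph{weighted} Koszul contraction
\[ h(m\, e_I) \;=\; \sum_{i \in I} \epsilon(i, I) \, a_i \, \de_{t_i} m \, e_{I\setminus i}, \]
with $\epsilon(i, I)$ the standard Koszul sign. The usual off-diagonal cancellation in $dh + hd$ leaves
\[ (dh + hd)\big|_{I} \;=\; \sum_{i \in I} a_i t_i \de_{t_i} + \sum_{j \notin I} a_j (t_j \de_{t_j} + 1) \;=\; L(t\de_t) + (|L| - |L|_I) \;=\; \Phi\big|_I, \]
so $\Phi$ is null-homotopic, forcing $B_L^\chi(\cM)$ to be acyclic for every $\chi \ne 0$.

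For acyclicity of $A_L^\chi(\cM)$ when $\chi > 0$, I would first handle $\chi \gg 0$ for $\cM = \cD_T$ with its tautological filtration, where the statement is immediate from the multiplicative property of ${}^L V^\bullet \cD_T$, and then transfer it to general $\cM$ via a strict $R_L(\cD_T)$-surjection $\bigoplus_i (\cD_T, {}^L V[k^{(i)}]) \twoheadrightarrow (\cM, {}^L V)$ afforded by goodness of the filtration. Propagation to all $\chi > 0$ is a downward induction on jump values of ${}^L V^\bullet \cM$ using the short exact sequence of complexes
\[ 0 \to A_L^{>\chi}(\cM) \to A_L^\chi(\cM) \to B_L^\chi(\cM) \to 0, \]
combined with the vanishing of $B_L^\chi$ for $\chi > 0$ just proved, and the identification (by discreteness and left continuity of ${}^L V^\bullet \cM$) of $A_L^{>\chi}(\cM)$ with $A_L^{\chi^*}(\cM)$ for the next jump $\chi^* > \chi$.

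Finally, for the comparison, exhaustiveness and discreteness of ${}^L V^\bullet \cM$ imply that for $\chi$ sufficiently negative one has ${}^L V^{\chi + |L|_I}\cM = \cM$ for every $I$, so that $A_L^\chi(\cM)$ coincides with the Koszul complex ${\rm Kosz}(\cM, t) = \sigma^!(\cM)$. Iterating the short exact sequence above for all negative jump values and invoking acyclicity of $B_L^\mu(\cM)$ at each $\mu < 0$ yields a chain of quasi-isomorphisms ending in $\sigma^!(\cM) \cong A_L^0(\cM)$. The same SES at $\chi = 0$, combined with acyclicity of $A_L^{>0}(\cM) = A_L^{\chi^*}(\cM)$ for the smallest positive jump $\chi^*$ obtained in the previous paragraph, delivers $A_L^0(\cM) \cong B_L^0(\cM)$. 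The only step with genuine content is the weighted Koszul homotopy: the weights $a_i$ must be placed inside $h$ in precisely the right way so that the standard Koszul cancellations produce $L(t\de_t)$ (rather than $\sum t_i \de_{t_i}$) together with the correct constant shift to match $\Phi$; everything else is a direct transcription of the argument for \propositionref{prop-KoszulVFilt}.
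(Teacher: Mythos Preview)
Your approach is the same as the paper's (which simply points to the proof of \propositionref{prop-KoszulVFilt}), and your weighted null-homotopy computation for the acyclicity of $B_L^\chi(\cM)$ is correct and in fact more detailed than what the paper provides.

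There is, however, one genuine slip in the final comparison step. You write that ``exhaustiveness and discreteness of ${}^L V^\bullet \cM$ imply that for $\chi$ sufficiently negative one has ${}^L V^{\chi + |L|_I}\cM = \cM$ for every $I$.'' This is false in general: exhaustiveness only gives $\bigcup_\chi {}^L V^\chi \cM = \cM$, not stabilization. For instance, with $r=1$ and $\cM = \cO_X[t,t^{-1}]$ one has $V^\lambda\cM = t^{\lceil\lambda\rceil-1}\cO_X[t]$, which is a proper submodule for every finite $\lambda$. Consequently there can be infinitely many negative jump values, so your ``chain of quasi-isomorphisms'' need not terminate after finitely many steps.

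The fix is exactly what the paper does in \propositionref{prop-KoszulVFilt}: rather than claiming $A_L^\chi(\cM) = {\rm Kosz}(\cM,t)$ for some finite $\chi$, observe that the inclusions $A_L^0(\cM)\hookrightarrow A_L^\chi(\cM)$ are quasi-isomorphisms for every $\chi<0$ (by your short exact sequence and the acyclicity of $B_L^\mu$ for $\mu<0$), and then pass to the inductive limit $\varinjlim_{\chi\to-\infty}A_L^\chi(\cM)$, which \emph{is} the full Koszul complex by exhaustiveness. Filtered colimits are exact, so this gives the desired quasi-isomorphism $A_L^0(\cM)\cong\sigma^!(\cM)$. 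With this correction the argument goes through.

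A minor further remark: your ``transfer via a strict surjection'' for the $\chi\gg 0$ acyclicity of $A_L^\chi$ is stated as if a single surjection suffices, but one actually needs to iterate (take the kernel, resolve again, and repeat $r$ times) using Noetherianity of $R_L(\cD_T)$, as spelled out in \lemmaref{lem-EventuallyFiltAcyclicGood}. The paper's own sketch in \propositionref{prop-KoszulVFilt} is equally terse on this point, so this is more a matter of completeness than correctness.
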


\begin{proof}[Sketch of Proof] As $L(\theta) - \chi -j +|L|$ is nilpotent on ${\rm Gr}_L^\chi(\cM)$, we see that $L(\theta) -j + |L|$ is an automorphism of ${\rm Gr}_L^{\chi+j}(\cM)$ for all $\chi \neq 0$. This allows us to define an automorphism of the complex $B^\chi_L(\cM)$ (resp. $C^\chi_L(\cM)$). But using the $\de_{t_i}$ maps (resp. $t_i$), it is easy to see that this automorphism is null-homotopic, proving that $B^\chi_L(\cM)$ (resp. $C^\chi_L(\cM)$) is acyclic for $\chi \neq 0$. See \cite{CD}*{Thm. 3.1} for details on constructing the null-homotopy.

The claim that $A^\chi(\cM)$ is acyclic is easy to show for $\chi \gg 0$ using a strict surjection (on the $\Z$-indexed part)
\[ \bigoplus_I (\cD_{X\times \A^r},V[\beta_i]) \to (\cM,V),\]
following the argument of \lemmaref{lem-EventuallyFiltAcyclicGood} below. As the argument is exactly analogous and the later one is more detailed, we omit the proof here.

Finally, the definition of $\sigma^!(\cM)$ is as the derived $\cO$-module pull-back of $\cM$ along $\sigma \colon X \times \{0\} \to X\times \A^r_t$. Using the Koszul resolution of $\cO_{X\times \{0\}}$, we see that $\sigma^!(\cM) = {\rm Kosz}(\cM,t)$.

Using what we have already shown, it is obvious that $A^0(\cM) \to B^0(\cM)$ is a quasi-isomorphism. By discreteness of the filtration $V^\bullet \cM$, we can check that for all $\chi < 0$, the inclusion $A^0(\cM) \to A^\chi(\cM)$ is a quasi-isomorphism. Taking the inductive limit as $\chi \to -\infty$ (which is an exact functor) proves the claim, as $\bigcup V^\lambda \cM = \cM$.
\end{proof}

The following proposition gives the characterizing properties of the canonical ${}^L V$-filtration, as well as a useful test for containment. As above, instead of $L(s)+\chi$ being nilpotent on ${\rm Gr}_L^{\chi}(\cM)$, we could ask for $\theta_L - \chi + |L|$ to be nilpotent, where $\theta_L = \sum_{i=1}^r a_i t_i\partial_{t_i}$.

\begin{prop} \label{prop-characterizeLV} Let $\cM$ be a $\cD_T$-module. Assume $U^\bullet \cM$ is an exhaustive discrete, left-continuous $\Q$-indexed filtration such that the following conditions hold:
\begin{enumerate} \item ${}^L V^k \cD_T \cdot U^\chi \cM \subseteq U^{k+\chi}\cM$,
\item for $\chi \gg 0$ we have equality $U^{\chi}\cM = \sum_{i=1}^r t_i U^{\chi-a_i}\cM,$
\item for any $\chi \in \Q$, the operator $L(s)+\chi$ is nilpotent on ${\rm Gr}_U^\chi(\cM)$.
\end{enumerate}

Then ${}^L V^{\chi}\cM \subseteq U^\chi \cM$ for all $\chi \in \Q$. If, moreover, we assume that $U^\chi \cM$ is coherent over ${}^L V^0\cD_T$ for all $\chi \in \Q$, then equality holds.
\end{prop}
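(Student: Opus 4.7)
The plan is two-fold: first prove the inclusion ${}^L V^\chi \cM \subseteq U^\chi \cM$ by a $b$-function argument using only conditions (1) and (3), and then, under the coherence hypothesis, upgrade to equality by reducing to \propositionref{prop-orderFilt} at integer levels and then exploiting the uniqueness of the $\Q$-refinement.

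For the inclusion, I would fix a nonzero $u \in {}^L V^\chi \cM$ and set $\chi_0 = \sup\{\chi' \in \Q : u \in U^{\chi'}\cM\}$. By discreteness and left-continuity, either $\chi_0 = +\infty$ (and we are done) or $u \in U^{\chi_0}\cM \setminus U^{>\chi_0}\cM$. Assuming $\chi_0 < \chi$, factor the $b$-function of $u$ as $b_u(w) = \prod_{i=1}^N (w+\gamma_i)$ with all $\gamma_i \geq \chi > \chi_0$, and write $b_u(L(s))u = Pu$ with $P \in {}^L V^1 \cD_T$. Condition (1) then gives $b_u(L(s))u = Pu \in {}^L V^1 \cD_T \cdot U^{\chi_0}\cM \subseteq U^{\chi_0+1}\cM$, so the class $\bar u \in {\rm Gr}_U^{\chi_0}\cM$ satisfies $b_u(L(s))\bar u = 0$. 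By (3), $L(s)+\chi_0$ is nilpotent on ${\rm Gr}_U^{\chi_0}\cM$, so each factor $L(s)+\gamma_i = (L(s)+\chi_0) + (\gamma_i-\chi_0)$ is a nilpotent operator plus the nonzero scalar $\gamma_i-\chi_0 > 0$, hence invertible; this forces $\bar u = 0$, contradicting the choice of $\chi_0$.

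For the equality under coherence, I would first use coherence of each $U^k \cM$ over ${}^L V^0 \cD_T$ together with the upward generation coming from iterating (2) to conclude that the $\Z$-indexed part of $U$ is a good filtration, and hence admits a $b$-function $b_U$. I would then show $b_U$ has all roots of the form $-\gamma$ with $\gamma \in [0,1)$: between consecutive integers the jumps of $U$ form a finite chain $\chi_0 < \chi_1 < \dots < \chi_{M-1}$ by discreteness, and composing the nilpotency relations $(L(s)+\chi_i)^{N_i} U^{\chi_i}\cM \subseteq U^{\chi_{i+1}}\cM$ from (3) produces a polynomial in $L(s)$ annihilating $U^k \cM / U^{k+1}\cM$ whose roots lie in the correct range. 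Then \propositionref{prop-orderFilt} gives $U^j \cM = {}^L V^j \cM$ for every $j \in \Z$.

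Finally, to extend this agreement to all $\chi \in \Q$, I would argue that both $U^\bullet \cM$ and ${}^L V^\bullet \cM$ are discrete, left-continuous $\Q$-refinements of the common $\Z$-indexed filtration, each satisfying (3). On the intrinsic quotient $Q_k = U^k\cM / U^{k+1}\cM$ the operator $L(s)$ decomposes into generalized eigenspaces $Q_k = \bigoplus_\chi E_\chi$ at the eigenvalues $-\chi$, and the same chaining argument used in the first inclusion, applied to each filtration in turn, identifies $U^\chi\cM / U^{k+1}\cM = \bigoplus_{\chi' \geq \chi} E_{\chi'}$ and likewise for ${}^L V^\chi \cM$. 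Since the right-hand side depends only on $Q_k$ and $L(s)$, the two refinements agree. I expect this last step --- pinning down the $\Q$-refinement uniquely from discreteness, left-continuity, and (3) alone --- to be the main technical obstacle, since (3) on its own only controls each graded piece, not the position of the intermediate subspaces inside $Q_k$.
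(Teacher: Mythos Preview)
Your inclusion argument is correct and is precisely the comparison lemma the paper invokes (citing \cite{CDM3}*{Prop.~3.14}) written out in full; the paper leaves those details to the reader.

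For the equality under coherence you take a longer route than the paper. The paper formulates one symmetric observation --- if $U_1^\bullet$ satisfies (1)--(3) together with coherence and $U_2^\bullet$ satisfies only (1)--(3), then $U_1^\bullet \subseteq U_2^\bullet$ --- verifies that ${}^L V^\bullet$ itself satisfies all four conditions (condition (2) coming from the Koszul acyclicity of \lemmaref{lem-DModuleRestriction}), and then applies the observation in both directions. Concretely this amounts to rerunning your part-one argument with the roles reversed: for $u \in U^\chi$ lying in ${}^L V^{\chi_0}\setminus {}^L V^{>\chi_0}$ with $\chi_0 < \chi$, iterate (3) and discreteness on the $U$ side to produce $p(L(s))u \in U^K$ with all roots of $p$ at most $-\chi$, then use (2) and coherence of a single $U^{\ell_0}$ to force $U^K \subseteq {}^L V^{>\chi_0}$ for $K\gg 0$, and conclude by invertibility on ${\rm Gr}_{{}^L V}^{\chi_0}$ as before. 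Your detour through \propositionref{prop-orderFilt} also works, but your diagnosis of the difficulty is inverted: the $\Q$-refinement step you flag as the main obstacle is exactly the content of Remark~\ref{rmk-refineQIndexed} and is routine, whereas the goodness of the $\Z$-indexed $U$, which you pass over in one line, is where care is needed --- coherence of each $U^k$ together with (2) controls large $k$ but says nothing about $k \to -\infty$, and one must first feed in the inclusion ${}^L V^k \subseteq U^k$ from part one (hence $U^k = \cM$ for $k \ll 0$) to close that direction.
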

\begin{proof} The claim follows from the observation that if $U_1^\bullet \cM$ is a filtration satisfying all conditions in the proposition statement and the coherence over ${}^L V^0\cD_T$ assumption and $U_2^\bullet \cM$ is another filtration satisfying just the conditions in the proposition statement, then $U_1^\bullet \cM \subseteq U_2^\bullet \cM$. This can be shown similarly to \cite{CDMO}*{Thm. 3.3} and we leave the checking of details to the reader.

To see why this observation implies the desired result, it suffices to note that ${}^L V^\bullet \cM$ satisfies all the conditions in the proposition statement. Indeed, by construction, $L(s)+\chi$ is nilpotent on ${\rm Gr}_L^\chi(\cM)$. By definition of ${}^L V^\chi \cM$ in Remark \ref{rmk-refineQIndexed}, it is easy to see that the $\Q$-indexed filtration ${}^L V^\chi \cM$ is compatible, using the fact that $P t^\beta \de_t^\gamma L(s) = L(s+\beta-\gamma) P t^\beta \de_t^\gamma$ for $P\in \cD_X$. By the goodness of the $\Z$-indexed filtration ${}^L V^\bullet \cM$ and Noetherianity of the ring ${}^L V^0\cD_T$ (which holds because it is the $0$th graded piece of the Noetherian $\Z$-graded ring $R_L(\cD_T)$), we see that each ${}^L V^\chi \cM$ is ${}^L V^0\cD_T$-coherent.

From this, using the acyclicity of \lemmaref{lem-DModuleRestriction}, we see that the Koszul-like complex
\[ A_L^\gamma(\cM) = \cx{{}^L V^\gamma \cM \xrightarrow[]{t_i} \bigoplus_{i=1}^r {}^L V^{\gamma+a_i}\cM \xrightarrow[]{t_i} \dots \xrightarrow[]{t_i} {}^L V^{\gamma+|L|}\cM}\]
is acyclic for all $\gamma > 0$, where $|L| = \sum_{i=1}^r a_i$. In particular, by the vanishing of the rightmost cohomology, we see that for all $\chi >|L|$, we have the equality
\[ {}^L V^\chi \cM = \sum_{i=1}^r t_i {}^L V^{\chi-a_i}\cM.\]
\end{proof}

\begin{eg} Let $\cM$ be supported on $V(t_1,\dots, t_r) \subseteq T$. Then we can write 
\[ \cM = \bigoplus_{\alpha \in \N^r} \cM_0 \de_t^\alpha \delta_0 \] 
where $t_i(\eta \delta_0) = 0$ for all $\eta \in \cM_0$. In particular, $$L(s) \eta \delta_0 = \sum_{i=1}^r -a_i \de_{t_i} t_i (\eta \delta_0) = 0.$$
It is not hard to check that $\cM$ is $\Q$-specializable. We have the equality 
\[L(s) \de_t^\alpha\eta\delta_0 = \de_t^\alpha L(s+\alpha)\eta\delta_0 = L(\alpha) \de_t^\alpha\eta\delta_0,\] where the last equality is due to the vanishing $L(s)\eta\delta_0 = 0$.

Hence, we see in fact that
\[ {}^L V^{\lambda}\cM = {}^L V^{\lceil \lambda \rceil}\cM = \bigoplus_{L(\alpha)\leq -\lceil \lambda \rceil} \cM_0 \de_t^\alpha \delta_0,\]
which has $b$-function equal to $b(w) = w$.
\end{eg}

\begin{eg} \label{eg-SupportedHypersurface} Let $\cM$ be a coherent $\cD_T$-module supported on $V(t_i) \subseteq T$. Then we have $\cM = \bigoplus_{j \in \N} \cN \de_{t_i}^j \delta_0$ where $t_i(\eta \delta_0) = 0$ for all $\eta \in \cN$. In particular,
we have $L(s)(\eta \delta_0) = (\ell(s) \eta)\delta_0$, where if $L = \sum_{i=1}^r a_i s_i$ then $\ell = \sum_{j\neq i} a_j s_j$. More generally,
\[ L(s)(\eta \de_{t_i}^j \delta_0) = (\ell(s) + a_i j)(\eta) \de_{t_i}^j \delta_0.\]

From this it is easy to see that $\cM$ is $\Q$-specializable if $\cN$ is, and that we have equality
\[ {}^L V^\lambda \cM = \bigoplus_{j\geq 0} {}^\ell V^{\lambda + a_i j}\cN \de_{t_i}^j \delta_0.\]
\end{eg}

The following lemma applies in particular to the case when $\cM$ is $\cO$-coherent, or when $\cM = \cN \boxtimes \O_{\A^r_t}$ for $\cN$ a $\cD_X$-module.

\begin{lem} \label{lem-LVtAdic} Let $\cM$ be a coherent $\cD_T$-module. Then the ${}^L V$-filtration is ``$t$-adic", in the sense that
\[ {}^L V^{\lambda}\cM = \begin{cases} \cM & \lambda \leq |L| \\ \sum_{i=1}^r t_i {}^L V^{\lambda-a_i}\cM & \lambda > |L|,\end{cases}\]
if and only if $\cM$ is coherent over ${}^L V^0\cD_T$.
\end{lem}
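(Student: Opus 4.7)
My plan is to apply \propositionref{prop-characterizeLV} after first reducing the $t$-adic condition to a single equality. The second clause of the $t$-adic formula---that ${}^L V^\lambda \cM = \sum_i t_i\, {}^L V^{\lambda-a_i}\cM$ for $\lambda > |L|$---holds automatically for any $\Q$-specializable $\cM$, as it is the vanishing of the top cohomology of the Koszul-like complex $A_L^\chi(\cM)$ for $\chi > 0$ already established in \lemmaref{lem-DModuleRestriction}. Consequently, the entire $t$-adic condition collapses to the single equality $\cM = {}^L V^{|L|}\cM$.

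For the forward direction (``$t$-adic implies ${}^L V^0\cD_T$-coherent''), I would invoke the observation embedded in the proof of \propositionref{prop-characterizeLV}: combining goodness of the $\Z$-indexed ${}^L V$-filtration on $\cM$ with Noetherianity of ${}^L V^0\cD_T$ (which follows from Noetherianity of $R_L(\cD_T)$ in \lemmaref{lem-ArtinRees}), every piece ${}^L V^\chi \cM$ is coherent over ${}^L V^0 \cD_T$. Applying this at $\chi = |L|$ and using the reduction above gives $\cM = {}^L V^{|L|}\cM$ as a coherent ${}^L V^0\cD_T$-module.

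For the converse (``${}^L V^0 \cD_T$-coherent implies $t$-adic''), I would construct the candidate filtration
\[ U^\chi \cM = \begin{cases} \cM & \chi \leq |L|, \\ \sum_{i=1}^r t_i\, U^{\chi - a_i}\cM & \chi > |L|, \end{cases} \]
extended to a $\Q$-indexed filtration by left continuity, and check the four hypotheses of \propositionref{prop-characterizeLV}, which would then force $U = {}^L V$ and hence the $t$-adic form. The compatibility ${}^L V^k \cD_T \cdot U^\chi \cM \subseteq U^{k+\chi}\cM$ follows by induction on $\chi$ using ${}^L V^k \cD_T \cdot t_i \subseteq {}^L V^{k+a_i}\cD_T$; the stabilization $U^\chi = \sum_i t_i\, U^{\chi - a_i}\cM$ for $\chi \gg 0$ is built into the definition; and ${}^L V^0\cD_T$-coherence of each $U^\chi$ follows inductively from coherence of $\cM = U^{|L|}$, absorbing commutators $[P, t_i]$ for $P \in {}^L V^0\cD_T$ into the ambient ring action.

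The main obstacle I expect is condition (3), the nilpotence of $L(s)+\chi = -(\theta_L - (\chi - |L|))$ on each graded piece ${\rm Gr}^\chi_U\cM$. At the base case $\chi = |L|$, we have ${\rm Gr}^{|L|}_U\cM = \cM/(t_1,\dots,t_r)\cM$, and $\theta_L = \sum_i a_i t_i \de_{t_i}$ visibly acts as zero. For larger $\chi$, the induction is driven by the commutation identity
\[ (\theta_L - (\chi - |L|))(t_i m) = t_i\bigl(\theta_L - ((\chi - a_i) - |L|)\bigr) m, \]
which propagates nilpotence from ${\rm Gr}^{\chi - a_i}_U\cM$ to ${\rm Gr}^\chi_U\cM$, with the trivial vanishing ${\rm Gr}^{\chi'}_U\cM = 0$ for $\chi' < |L|$ serving as the base. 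The delicate point is tracking jumps in the $\Q$-indexed refinement and ensuring $t_i U^{>\chi-a_i}\cM \subseteq U^{>\chi}\cM$, which reduces to left continuity plus the compatibility condition ${}^L V^{a_i}\cD_T\cdot U^{\chi-a_i}\cM\subseteq U^\chi\cM$.
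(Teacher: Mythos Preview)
Your proposal is correct and follows essentially the same strategy as the paper: define the $t$-adic candidate filtration and verify the hypotheses of \propositionref{prop-characterizeLV}. The only difference is organizational---the paper writes the candidate explicitly as $U^\lambda\cM = (t^\beta \mid L(\beta+\underline{1}) \geq \lambda)\cdot\cM$ and checks each condition directly on the generators $t^\beta m$ (in particular showing that $L(s)+\lambda$ annihilates ${\rm Gr}^\lambda_U\cM$, not merely acts nilpotently), whereas you arrange the same verifications inductively in $\chi$.
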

\begin{proof} If the filtration is $t$-adic, then ${}^L V^{|L|}\cM = \cM$ is, by definition, coherent over ${}^L V^{0}\cD_T$. Note that another way to write the $t$-adic filtration is
\[ {}^L V^{\lambda}\cM = \big(t^\beta \mid L(\beta + \underline{1}) \geq \lambda\big)\cdot \cM.\]

For the converse, note that if $\cM$ is coherent over ${}^L V^0\cD_T$ and we define
\[ U^\lambda \cM = \big(t^\beta \mid L(\beta+\underline{1})\geq \lambda\big)\cdot \cM,\]
then each $U^\lambda \cM$ is coherent over ${}^L V^{0}\cD_T$. Then we need to check the remaining properties of the ${}^L V$-filtration to conclude. 

Assume $L(\beta+\underline{1})\geq \lambda$. Then $L(\beta+e_i +\underline{1})\geq \lambda +a_i$, so we see that $t_i U^\lambda \cM \subseteq U^{\lambda+a_i}\cM$. 

Let $L(\beta+\underline{1}) \geq \lambda$. Then for $t^\beta m \in U^\lambda \cM$, when we apply $\de_{t_i}$, there are two options. Either $\beta_i = 0$, in which case
\[ \de_{t_i}(t^\beta m) = t^\beta(\de_{t_i}m) \in U^\lambda\cM \subseteq U^{\lambda-a_i}\cM,\]
or $\beta_i > 0$, in which case
\[ \de_{t_i}(t^\beta m) = t^\beta(\de_{t_i} m) + \beta_i t^{\beta-e_i}m,\]
and so since $L(\beta - e_i + \underline{1}) \geq \lambda -a_i$, this shows that $\de_{t_i} U^{\lambda}\cM \subseteq U^{\lambda-a_i}\cM$.

Each $U^\lambda \cM$ is clearly stable by $\cD_X$, so we only need to prove the nilpotency of $L(s)+\lambda$ on ${\rm Gr}^\lambda_U \cM$. Note that ${\rm Gr}^\lambda_U \cM \neq 0$ if and only if there exists $\beta \in \N^r$ with $L(\beta+\underline{1}) = \lambda$. Take such a $\beta$ and consider an element $t^\beta m$. Then
\[ (L(s)+\lambda)(t^\beta m) = L(s+\beta+\underline{1})(t^\beta m) = t^\beta L(s+\underline{1})m,\]
and $s+\underline{1} = (s_1+1,\dots, s_r+1) = (-t_1 \de_{t_1},\dots, -t_r \de_{t_r})$. Hence,
\[ L(s+\underline{1}) m = -\sum_{i=1}^r a_i t_i \de_{t_i}(m),\]
and we have that $a_i t^{\beta+e_i}(\de_{t_i}m) \in U^{>\lambda}\cM$, proving the claim.
\end{proof}

\subsection{Specialization Constructions} \label{subsect-Specializations} In this section, we use deformation to the normal bundle to allow us to use established tools to study ${}^L V$-filtrations. The idea goes back at least to Verdier. The papers \cite{BMS} and \cite{CD} use these ideas to study the $V$-filtration when $L = \sum_{i=1}^r s_i$, and Wu's article \cite{Wu} discusses the case of arbitrary slopes $L$.

As above, consider $T = X \times \A^r_t$. Let $L = \sum_{i=1}^r a_i s_i$ be a non-degenerate slope. For $T = X\times \A^r_t$, define
\[ \widetilde{T}^L = X\times \A^r_{z} \times \A^1_u,\]
where the coordinates on the $\A^r$ term are $z_1,\dots, z_r$. We should think of $z_ i = t_i/u^{a_i}$, in the sense that we consider the natural morphism
\[ \widetilde{T}^L \to X\times \A^r_t, \quad (x,z,u) \mapsto (x, z_1u^{a_1},\dots, z_r u^{a_r}).\]

Over $\{u\neq 0\}$, this morphism is isomorphic to the projection $T \times \mathbf G_m \to T$. Let 
\[j_L \colon \{u\neq 0\} \to \widetilde{T}^L \] 
be the open embedding.
Then $\widetilde{T}^L$ is naturally a deformation to the normal bundle of $X\times \{0\} \subseteq T$.
The projection $\widetilde{T}^L \to \A^1_u$ is clearly smooth, so we can consider the ring of relative differential operators $\cD_{\widetilde{T}^L/\A^1_u}$.

We have the following identification of the relative differential operators with the Rees ring of $\cD_T$ for the ${}^L V$-filtration. As relative differential operator rings are Noetherian, this proves the first claim of \lemmaref{lem-ArtinRees}.

\begin{lem} \label{lem-LArtinRees} We have a filtered isomorphism of rings
\[ (\cD_{\widetilde{T}^L/\A^1_u},F) \cong (R_{L}(\cD_T),F).\]

Hence, we also have an isomorphism
\[ R_F(\cD_{\widetilde{T}^L/\A^1_u}) \cong R_{F,L}(\cD_T) = \bigoplus_{k,j} F_k {}^L V^j\cD_T z^k u^{-j}.\]

In particular, the rings $R_L(\cD_T)$ and $R_{F,L}(\cD_T)$ are Noetherian.
\end{lem}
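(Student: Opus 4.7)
The strategy is to write down the two obvious generators-and-relations descriptions and check that they match. On the relative side, $\cD_{\widetilde{T}^L/\A^1_u}$ is generated as a sheaf of rings by $\cD_X$, by the coordinate functions $\tau_i := t_i/u^{a_i}$ and $u$, and by the fiberwise vector fields $\de_{\tau_i}$; these satisfy $[\de_{\tau_i},\tau_j]=\delta_{ij}$, $[\de_{\tau_i},u]=[\tau_i,u]=0$, and commute appropriately with $\cD_X$. On the Rees side, $R_L(\cD_T)\subseteq \cD_T[u,u^{-1}]$ is generated as a ring by $\cD_X$, by $u\in {}^L V^{-1}\cD_T\cdot u$, by $t_i u^{-a_i}\in {}^L V^{a_i}\cD_T\cdot u^{-a_i}$ (note $L(e_i)=a_i$), and by $\de_{t_i}u^{a_i}\in {}^L V^{-a_i}\cD_T\cdot u^{a_i}$.

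The plan is to define a morphism
\[
\Phi\colon \cD_{\widetilde{T}^L/\A^1_u}\longrightarrow R_L(\cD_T)
\]
by sending $\tau_i\mapsto t_iu^{-a_i}$, $\de_{\tau_i}\mapsto \de_{t_i}u^{a_i}$, $u\mapsto u$, and extending by the identity on $\cD_X$. A quick calculation shows $[\de_{t_i}u^{a_i},t_ju^{-a_j}]=\delta_{ij}$ and the remaining commutators all vanish, so the relations among the generators are preserved. To see that $\Phi$ is an isomorphism, I would construct its inverse $\Psi$ by the formula
\[
\Psi\bigl(Q t^\beta \de_t^\gamma u^{-k}\bigr)=Q\,\tau^\beta\,\de_\tau^\gamma\,u^{L(\beta)-L(\gamma)-k},
\]
valid because $Q t^\beta\de_t^\gamma\in {}^L V^k\cD_T$ forces $L(\beta)-L(\gamma)\geq k$, so the exponent of $u$ is non-negative. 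The two composites $\Phi\circ\Psi$ and $\Psi\circ\Phi$ can be checked to be the identity directly from the substitutions $t_i=\tau_iu^{a_i}$, $\de_{t_i}=u^{-a_i}\de_{\tau_i}$ (which make sense inside $R_L(\cD_T)$ after multiplying by the appropriate power of $u$).

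For the filtered statement, note that the order filtration $F_\bullet$ of a relative differential operator counts only the $\de_{\tau_i}$'s (and $\de_x$'s), while on the Rees side $F_k R_L(\cD_T)=\bigoplus_j F_k{}^L V^j\cD_T\cdot u^{-j}$ counts only the $\de_{t_i}$'s (and $\de_x$'s). Under $\Phi$, each $\de_{\tau_i}$ is sent to $\de_{t_i}u^{a_i}$, which contributes exactly one to the order in $\de_t$; powers of $u$ and of $\tau_i$ (or $t_i$) never contribute to $F_\bullet$ on either side. Hence $\Phi$ restricts to an isomorphism $F_k \cD_{\widetilde{T}^L/\A^1_u}\cong F_k R_L(\cD_T)$ for each $k$, proving the first isomorphism.

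The second isomorphism is formal: applying the Rees construction for $F$ to both sides of the first, and unwinding the bigrading, yields
\[
R_F\bigl(\cD_{\widetilde{T}^L/\A^1_u}\bigr)\cong R_F\bigl(R_L(\cD_T)\bigr)=\bigoplus_{k,j}F_k{}^L V^j\cD_T\,z^k u^{-j}=R_{F,L}(\cD_T).
\]
There is no real obstacle here beyond bookkeeping; the only thing to be careful about is keeping track of signs/exponents of $u$ so that the image of $\Psi$ indeed lives in $\cD_{\widetilde{T}^L/\A^1_u}$ (i.e.\ has non-negative $u$-exponent), which is exactly what the ${}^L V$-filtration condition $L(\beta)\geq L(\gamma)+k$ guarantees.
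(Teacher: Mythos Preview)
Your proposal is correct and follows essentially the same approach as the paper: define the map on generators by $\tau_i\mapsto t_iu^{-a_i}$, $\de_{\tau_i}\mapsto \de_{t_i}u^{a_i}$, construct the inverse using that $L(\beta)-L(\gamma)\geq k$ forces the $u$-exponent to be non-negative, and observe that the order filtration is preserved since each $\de_{\tau_i}$ corresponds to exactly one $\de_{t_i}$. The paper simplifies notation by assuming $X$ is a point, but otherwise the arguments are the same.
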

\begin{proof} We can reduce to the case that $X$ is a point. Indeed, it is easy to see that if the result is established in that setting, then using that $X = X\times {\rm Spec}(\C)$, we have isomorphisms
\[ (\cD_{\widetilde{T}^L/\A^1_u},F) \cong (\cD_X,F) \boxtimes (\cD_{\widetilde{T}^L_{\rm pt}/\A^1_u},F) \cong (\cD_X,F) \boxtimes (R_L(\cD_{T_{\rm pt}},F)) \cong (R_L(\cD_T),F),\]
where $T_{\rm pt} = {\rm Spec}(\C) \times \A^r_t$ and $\widetilde{T}^L_{\rm pt} = {\rm Spec}(\C) \times \A^r_z \times \A^1_u$, we conclude the general result.

In this case, we have $\cD_{\widetilde{T}^L_{\rm pt}/\A^1_u} = \C[u,z_1,\dots, z_r]\langle \de_{z_1},\dots, \de_{z_r}\rangle$. Define a $\C[u]$-linear morphism
\[ \cD_{\widetilde{T}^L/\A^1_u} \to R_L(\cD_T), \quad z_i \mapsto \frac{t_i}{u^{a_i}},\quad \de_{z_i} \mapsto \de_{t_i} u^{a_i}.\]
Define the inverse by
\[ R_L(\cD_T) \to \cD_{\widetilde{T}^L/\A^1_u}, \quad t^\beta \de_t^\gamma u^{L(\gamma-\beta)} \mapsto z^\beta \de_z^\gamma,\]
and extend it $\C[u]$-linearly. These maps are easily checked to be mutually inverse and it is clear that they preserve the order filtration.

For the final claim, note that $R_F(-)$ applied to any ring of relative differential operators (for a smooth map between smooth varieties) is Noetherian. One way to see this is that, by taking the quotient by the central element $z$, one gets the associated graded ring, which is naturally identified with the structure sheaf of the relative cotangent bundle as in the case of usual differential operators.
\end{proof}

Let $\cM$ be a $\cD_T$-module with $\Z$-indexed filtrations $F_\bullet \cM, U^\bullet \cM$ such that $F_\bullet \cM$ is compatible with $F_\bullet \cD_T$ and $U^\bullet \cM$ is compatible with ${}^L V^\bullet \cD_T$. We can define
\[ F_p A^\chi(\cM,U) = \cx{F_{p+r} U^\chi \cM \xrightarrow[]{t_i} \bigoplus_{i=1}^r F_{p+r} U^{\chi+a_i} \cM \xrightarrow[]{t_i} \dots \xrightarrow[]{t_i} F_{p+r} U^{\chi + |L|}\cM}.\]

The Noetherianity of the ring $R_{F,L}(\cD_T)$ allows us to prove $F$-filtered acyclity of $A^\chi(\cM,U,F)$ for $\chi \gg 0$.

\begin{lem} \label{lem-EventuallyFiltAcyclicGood} Let $(\cM,U,F)$ be a bi-filtered $\cD_T$-module as above such that $R_{F,U}(\cM) = \bigoplus_{k,j\in \Z} F_k U^j \cM z^k u^{-j}$ is coherent over $R_{F,L}(\cD_T)$. Then there exists $k_0\in \Z$ such that $k \geq k_0$ implies $A^k(\cM,U,F)$ is $F$-filtered acyclic.
\end{lem}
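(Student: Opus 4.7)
The plan is to exploit Noetherianity of the bi-Rees ring $R_{F,L}(\cD_T) = \bigoplus_{p,j} F_p {}^L V^j \cD_T \, z^p u^{-j}$ (which, by Lemma \ref{lem-LArtinRees}, is identified with $R_F(\cD_{\widetilde{T}^L/\A^1_u})$, the $F$-Rees ring of a Noetherian filtered ring) and to reduce the claim, via iterated syzygies, to the case of $\cM = \cD_T$ with its canonical bi-filtration. First I would rephrase filtered acyclicity in Rees terms: viewing $R_{F,U}(\cM)$ as a bi-graded module over $R_{F,L}(\cD_T)$, the filtered piece $F_p A^\chi(\cM,U)$ is exactly the $z$-degree-$(p+r)$ slice of the global Koszul complex $K_\bullet(\tau;\, R_{F,U}(\cM))$ on the elements $\tau_i = t_i u^{-a_i} \in R_{F,L}(\cD_T)$, each of bi-degree $(0,a_i)$. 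So the required filtered acyclicity amounts to vanishing of $H^\bullet(K_\bullet(\tau;\, R_{F,U}(\cM)))$ in every bi-degree with $u$-degree $\geq k_0$.

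For the base case $\cM = \cD_T$: under Lemma \ref{lem-LArtinRees}, $R_{F,L}(\cD_T) \cong R_F(\cD_{\widetilde{T}^L/\A^1_u})$ and $\tau_i$ corresponds to the relative coordinate $z_i$. Since $z_1,\ldots,z_r$ form a regular sequence in $\cD_{\widetilde{T}^L/\A^1_u}$, their Koszul complex is a free resolution of the quotient, and it is strict for the Hodge filtration because on the associated graded (the symbol ring $\cO_{\widetilde{T}^L}[\xi]$) the $z_i$ still form a regular sequence. A direct computation in normal form shows that $R_{F,L}(\cD_T)/(\tau)$ is concentrated in $u$-degrees $\leq 0$ uniformly in $p$: any $t^\beta \de_t^\gamma \in F_p {}^L V^j \cD_T$ with $j>0$ must satisfy $L(\beta) > L(\gamma)\geq 0$, forcing $\beta\neq 0$, so some $t_i$ factors out without raising the $F$-order. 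Hence $H^\bullet(K_\bullet(\tau;\, R_{F,L}(\cD_T)))$ vanishes strictly in $F$ in all positive $u$-degrees.

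For the general case, I would use iterated syzygies. Choose a strict bi-filtered surjection $P_0 \twoheadrightarrow R_{F,U}(\cM)$ with $P_0$ a finite direct sum of bi-shifted copies of $R_{F,L}(\cD_T)$; by Artin–Rees (Lemma \ref{lem-ArtinRees}) the kernel $\cK_0$ inherits a good bi-filtration, so is again coherent. Iterate to produce $P_0,\cK_0,P_1,\cK_1,\ldots,P_r,\cK_r$, each $P_i$ a finite direct sum of bi-shifts of $R_{F,L}(\cD_T)$. By Step 1 applied termwise, there is a uniform bound $k_0$ (the maximum of the finitely many shifts involved) above which each $K_\bullet(\tau;\, P_i)$ is Koszul-acyclic. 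The short exact sequences $0\to\cK_i\to P_i\to\cK_{i-1}\to 0$ yield long exact sequences of Koszul cohomology in which, in $u$-degrees $\geq k_0$, the $P_i$-terms vanish and the connecting maps become isomorphisms; after $r+1$ steps one obtains $H^i K_\bullet(\tau;\, R_{F,U}(\cM)) \cong H^{i+r+1} K_\bullet(\tau;\, \cK_r) = 0$, the last vanishing because the Koszul complex has length $r+1$. The main obstacle is maintaining strictness of $F$ throughout this construction, which is precisely what Lemma \ref{lem-ArtinRees} delivers: every syzygy module inherits a good bi-filtration, so each surjection $P_i \twoheadrightarrow \cK_{i-1}$ can be chosen strict in both filtrations. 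Once one commits to working entirely in the bi-Rees category, every ``filtered'' condition becomes a plain ``graded'' one, after which the argument is a standard Noetherian dévissage.
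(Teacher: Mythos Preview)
Your proposal is correct and follows essentially the same approach as the paper: establish the base case for $\cD_T$ via the regular sequence argument on the symbol ring, then dévissage by taking iterated syzygies using Noetherianity of the bi-Rees ring. The only cosmetic differences are that you work explicitly in the bi-Rees category throughout and set up all $r+1$ syzygy steps in advance (taking $k_0$ as the maximum of all shifts at once), whereas the paper proceeds one kernel at a time, possibly enlarging $k_0$ at each step; note also that the Noetherianity you need for the kernels is that of $R_{F,L}(\cD_T)$ (via \lemmaref{lem-LArtinRees}), not \lemmaref{lem-ArtinRees} as you cite.
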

\begin{proof} It is a simple computation to see that $A^k(\cD_T,{}^L V,F)$ is filtered acyclic for all $k \geq 0$. Indeed, by taking ${\rm Gr}^F_\bullet$, this boils down to the claim that variables in a polynomial ring form a regular sequence, hence the corresponding Koszul complex is acyclic (except at the right). For the vanishing of the right-most cohomology, use that $k \geq 0$.

We will prove that, for any $(\cM,U,F)$ as in the lemma statement, there exists $k_0(\cM,U,F,j)$ (depending also on $0 \leq j \leq r$) with the property that, for $k \geq k_0(\cM,U,F,j)$, we have
\[ \cH^j A^k(\cM,U,F) = 0. \]

This will be done by descending induction on $j$.

Now, by coherence of $R_{F,U}(\cM)$ over $R_{F,L}(\cD_T)$, we (locally) have a finite indexing set $I$ and a strict surjection
\[ \bigoplus_{i\in I} (\cD_T,{}^L V[b_i],F[c_i]) \to (\cM,U,F) \to 0,\]
where $b_i,c_i \in \Z$ are integer shifts of the $\Z$-indexed filtrations ${}^L V^\bullet \cD_T, F_\bullet \cD_T$. This strict surjection is equivalent to the existence of $\{m_i\}_{i\in I}$ satisfying
\[ F_p U^j \cM = \sum_{i\in I} F_{p-c_i} {}^L V^{j+b_i} \cD_T \cdot m_i\]
for all $p,j \in \Z$.

By Noetherianity of $R_{F,L}(\cD_T)$, the kernel $\cK$ with its induced filtrations $F_\bullet\cK$ and $U^\bullet\cK$ also satisfies $R_{F,U}(\cK)$ is coherent over $R_{F,L}(\cD_T)$. We have the $F$-strict short exact sequence of complexes
\[ 0 \to A^k(\cK,U,F) \to \bigoplus_{i\in I} A^{k-b_i}(\cD_T,{}^L V, F[c_i]) \to A^k(\cM,U,F) \to 0,\]
and recall that for $k \geq k_0 = \max\{b_i\}$, the middle term is filtered acyclic by the first sentence of the proof. Hence, by looking at the long exact sequence in cohomology, we get the vanishing of $\cH^r F_p A^k(\cM,U)$ for all $p$ and filtered isomorphisms
\[ \cH^jF_pA^k(\cM,U)\cong \cH^{j+1}F_p A^k(\cK,U)\]
for $0 \leq j < r$.

This proves the existence of $k_0(\cM,U,F,r)$ for any $(\cM,U,F)$. Moreover, we see by descending induction on $j$ that we can define $k_0(\cM,U,F,j) = \max\{ k_0(\cM,U,F,r), k_0(\cK,U,F,j+1)\}$, and so the claim has been shown by descending induction on $j$.
\end{proof}

Given a system of coordinates $x_1,\dots, x_n$ on $X$, the variety $\widetilde{T}^L$ has local coordinates $x_1,\dots, x_n, z_1 = \frac{t_1}{u^{a_1}},\dots, z_r = \frac{t_r}{u^{a_r}}, u$. The open subset $T \times \mathbf G_m$ has the simpler system of coordinates $x_1,\dots, x_n, t_1,\dots, t_r, u$, and the change of variables formula yields (using $\widetilde{(-)}$ to denote the functions $x_1,\dots, x_n, u$ viewed in the second system of coordinates):
\[ \de_{\widetilde{x_i}} = \de_{x_i}, \quad \de_{t_i} = \frac{1}{u^{a_i}} \de_{z_i},\]
\[ \de_{\widetilde{u}} = \de_u + \sum_{j=1}^r \de_{\widetilde{u}}(z_j) \de_{z_j} = \de_u - \sum_{j=1}^r a_j \frac{t_j}{u^{a_j+1}} \de_{z_j} = \de_{v_i} - \frac{1}{u} L(t\de_t).\]

For $M$ a mixed Hodge module on $T$, consider $\widetilde{M}_L = j_{L*}(M \boxtimes \Q^H_{\mathbf G_m}[1])$ a mixed Hodge module on $\widetilde{T}_L$. The underlying $\cD$-module $\tcM_L$ is the $\cO$-module
\[ \bigoplus_{k\in \Z} \cM u^k,\]
on which, thanks to the computation of the coordinate change above, the action is given by
\[ z_i(mu^k) = (t_i m)u^{k-a_i},\]
\[ \de_{z_i}(m u^k) = \de_{t_i}(m) u^{k+a_i},\]
\[ \de_{u}(m u^k) = (k + L(t\de_t))(m)u^{k-1}.\]

We have the following:
\begin{prop} \label{prop-VFiltLSpec} Let $V^\bullet \tcM_L$ be the $V$-filtration along $u$. Then
\[ V^\lambda \tcM_L = \bigoplus_{k\in \Z} {}^L V^{\lambda +|L| - k -1} \cM u^k.\]

For any $\lambda \geq 0$ and $p\in \Z$, we have
\[ F_p V^\lambda \tcM_L = \bigoplus_{k\in \Z} F_p {}^L V^{\lambda+|L|-k-1}\cM u^k.\]
\end{prop}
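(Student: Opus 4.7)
The plan is to produce a candidate filtration, verify it satisfies the defining axioms of the $V$-filtration of $\tcM_L$ along $u$, and invoke uniqueness. Define
\[ U^\lambda \tcM_L := \bigoplus_{k\in \Z} {}^L V^{\lambda+|L|-k-1}\cM \cdot u^k.\]
Discreteness and left-continuity of $U^\bullet$ transfer directly from the same properties of ${}^L V^\bullet\cM$, so the task reduces to checking: (a) $U^\lambda$ is $V^0\cD_{\widetilde{T}^L}$-stable with $u \colon U^\lambda \to U^{\lambda+1}$ a bijection and $\de_u U^\lambda \subseteq U^{\lambda-1}$; (b) $u\de_u - \lambda + 1$ is nilpotent on $\mathrm{Gr}_U^\lambda$; and (c) coherence over $V^0\cD_{\widetilde{T}^L}$.

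For (a), I read off from the formulas $z_i(mu^k) = (t_im)u^{k-a_i}$, $\de_{z_i}(mu^k) = (\de_{t_i}m)u^{k+a_i}$, $u(mu^k) = mu^{k+1}$, and $\de_u(mu^k) = (k+L(t\de_t))(m)u^{k-1}$ that every one of these operators preserves the index $\lambda + |L| - k - 1$ up to the prescribed shift in $\lambda$, using that $t_i$ shifts ${}^L V$ by $+a_i$ and $\de_{t_i}$ by $-a_i$, while $L(t\de_t) \in {}^LV^0\cD_T$ preserves ${}^L V^\chi\cM$. The $u$-shift is an obvious bijection, which gives the equality $u U^\lambda = U^{\lambda+1}$ in particular for $\lambda \gg 0$.

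For (b), use $L(s) = -L(t\de_t) - |L|$ to rewrite the MHM nilpotency of $L(s)+\chi$ on $\mathrm{Gr}_L^\chi(\cM)$ as the nilpotency of $N := L(t\de_t) - (\chi - |L|)$. For $m \in {}^LV^\chi\cM$ representing a class on $\mathrm{Gr}_U^\lambda$, we have $\chi = \lambda + |L| - k - 1$, and I compute
\[ (u\de_u - \lambda + 1)(mu^k) = \bigl(k + L(t\de_t) - \lambda + 1\bigr)(m)\, u^k \equiv N(m)\cdot u^k \pmod{U^{>\lambda}\tcM_L},\]
where the cancellation $k + 1 - \lambda + (|L|-\chi) = 0$ is exactly the bookkeeping that identifies the two nilpotents. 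For (c), I reindex by $j = \lambda + |L| - 1 - k$ to obtain
\[ U^\lambda \tcM_L = u^{\lambda + |L| - 1}\bigoplus_{j\in\Z} {}^L V^j \cM \cdot u^{-j} = u^{\lambda + |L| - 1} R_L(\cM),\]
and invoke \lemmaref{lem-LArtinRees}: the Rees module $R_L(\cM)$ is coherent over $R_L(\cD_T) \cong \cD_{\widetilde{T}^L/\A^1_u}$ by goodness of the ${}^L V$-filtration, hence coherent over the bigger Noetherian ring $V^0\cD_{\widetilde{T}^L}$. By uniqueness of the $V$-filtration, $V^\lambda\tcM_L = U^\lambda\tcM_L$.

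For the filtered statement (where $\lambda \geq 0$), I apply $F_pV^\lambda \tcM_L = V^\lambda \tcM_L \cap j_{L*}(F_p j_L^* \tcM_L)$. On $\{u \neq 0\}$, we have $\tcM_L \cong \cM \boxtimes \cO_{\mathbf G_m}$ as a MHM (up to the shift in $\Q^H_{\mathbf G_m}[1]$, which shifts the underlying perverse sheaf but not the Hodge filtration formula), so the K\"unneth formula gives $F_p j_L^*\tcM_L = \bigoplus_k F_p\cM \cdot u^k$ (only the $j=0$ term of the external product contributes, since $F_{\leq -1}\cO_{\mathbf G_m} = 0$). Pushing forward by $j_{L*}$ just realizes this as a subsheaf of $\tcM_L$, and intersecting with $V^\lambda \tcM_L = U^\lambda\tcM_L$ yields
\[ F_pV^\lambda \tcM_L = \bigoplus_k \bigl(F_p\cM \cap {}^L V^{\lambda+|L|-k-1}\cM\bigr)\cdot u^k = \bigoplus_k F_p {}^L V^{\lambda+|L|-k-1}\cM\cdot u^k,\]
using the convention that the Hodge filtration on a ${}^L V$-piece is the one induced from $F_\bullet\cM$.

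The main obstacle will be verifying (c): one must recognize that the candidate $U^\lambda$ is nothing but a power-of-$u$ shift of the Rees module for the ${}^L V$-filtration, and translate coherence over the Rees ring into coherence over $V^0\cD_{\widetilde{T}^L}$. The nilpotency step in (b) is also where all index shifts must align exactly; once that numerology is pinned down, the remainder of the proof is routine since each axiom reduces to a property of ${}^LV^\bullet\cM$ already available.
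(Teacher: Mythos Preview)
Your proof is correct, and for the Hodge-filtered claim you argue exactly as the paper does. For the identification $V^\bullet\tcM_L = U^\bullet$, however, you take a different route: you verify all the axioms of the hypersurface $V$-filtration directly, handling coherence via the Rees-module identification $U^\lambda \cong u^{\lambda+|L|-1}R_L(\cM)$. The paper instead applies \propositionref{prop-characterizeLV} twice. First on $\tcM_L$ (checking only your (a), (b), and the eventual-equality condition, \emph{not} coherence) to get $V^\bullet\tcM_L \subseteq U^\bullet$. Then, for the reverse inclusion, the paper fixes $k$, defines an auxiliary filtration $\cU^\lambda\cM = \{m \mid mu^k \in V^{\lambda+k+1-|L|}\tcM_L\}$ on $\cM$, and checks the hypotheses of \propositionref{prop-characterizeLV} for it (coherence now coming for free from that of the already-existing $V^\bullet\tcM_L$), obtaining ${}^LV^\bullet\cM \subseteq \cU^\bullet$ and hence $U^\bullet \subseteq V^\bullet\tcM_L$. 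Your approach is more direct and ties the result cleanly to goodness of ${}^LV^\bullet\cM$; the paper's sidesteps the coherence check on $U^\lambda$ entirely by bootstrapping from the known $V$-filtration of the mixed Hodge module $\tcM_L$.

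One small gap in your write-up: the expression $u^{\lambda+|L|-1}R_L(\cM)$ only makes literal sense for $\lambda\in\Z$, since $R_L(\cM)$ is the Rees module of the $\Z$-indexed ${}^LV$-filtration and $u$ is an honest coordinate. For non-integer $\lambda$ you can either observe that $U^\lambda$ is a $V^0\cD_{\widetilde{T}^L}$-submodule of the coherent module $U^{\lfloor\lambda\rfloor}$ over a Noetherian ring, or argue that once $U^k = V^k\tcM_L$ holds for $k\in\Z$, the $\Q$-indexed agreement is automatic since both filtrations arise from the same $\Z$-indexed one by the eigenvalue-lifting procedure of Remark~\ref{rmk-refineQIndexed}.
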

\begin{proof} The second claim follows from the first using the fact that for all $p \in \Z$ and $\lambda \geq 0$, we have by \lemmaref{lem-strictSupportHodge} equality
\[ F_p V^\lambda \tcM_L = V^\lambda \tcM_L \cap j_{L*}(F_p(\cM \boxtimes \cO_{\mathbf G_m})).\]

For the first claim, define 
\[ U^\lambda \tcM_L = \bigoplus_{k\in \Z} {}^L V^{\lambda + |L| -k -1} \cM u^k.\]

We show that $U^\lambda \tcM_L$ satisfies the properties of the $V$-filtration along $u$. A simple computation shows 
\[ u U^\lambda = U^{\lambda+1}, \quad \de_u U^\lambda \subseteq U^{\lambda-1}.\]

As
\begin{equation} \label{eq-EulerRelated} u\de_u(mu^k) = (k + L(t\de_t))(m)u^k\end{equation}
it is easy to see that $u\de_u - \lambda +1$ is nilpotent on ${\rm Gr}_U^\lambda(\tcM_L)$. By \propositionref{prop-characterizeLV}, this proves the containment $V^\lambda \tcM_L \subseteq U^\lambda \tcM_L$.

For the other containment, for any fixed $k$, define a filtration
\[ \cU^\lambda \cM = \{m\in \cM \mid m u^k \in V^{\lambda + k +1 - |L|}\tcM_L\}.\]
Note that it is enough to verify the conditions of \propositionref{prop-characterizeLV}, as this shows
\[ {}^L V^\bullet \cM \subseteq \cU^\bullet \cM,\]
which implies $\cU^{\lambda}\tcM_L \subseteq V^\lambda \tcM_L $ since these are the graded pieces.

Let $mu^k \in V^{\lambda + k + 1 - |L|}\tcM_L$. Then by applying $u^{a_i} z_i$, we see that \[(t_i m)u^k \in V^{\lambda + a_i +k+1-|L|}\tcM_L,\] so that $t_i \cU^\lambda \subseteq \cU^{\lambda+a_i}$. Applying $\de_{z_i}$, we get that $\de_{t_i}(m) u^{k+a_i} \in V^{\lambda+a_i + k + 1 - |L|}\tcM_L$. 

As $u$ acts invertibly on $\tcM_L$, we know that $u \colon V^\chi \tcM_L \to V^{\chi+1}\tcM_L$ is an isomorphism. Thus, having $\de_{t_i}(m) u^{k+a_i} \in V^{\lambda+k+1-|L|}\tcM_L$ implies that $\de_{t_i}(m) u^{k} \in V^{\lambda-a_i + k + 1 - |L|}\tcM_L$.

By definition of the $V$-filtration, we know that $V^{\lambda+ k+1-|L|}\tcM_L$ is coherent over $V^0\cD_{\widetilde{T}^L} = \cD_X[z,u]\langle \de_z,u\de_u\rangle$. For some fixed $\lambda$, choose generators $m_1u^{\ell_1},\dots, m_a u^{\ell_a}$ for $V^{\lambda+ k+1-|L|}\tcM_L$ over $V^0$. As $u \colon V^\chi \tcM_L \to V^{\chi+1}\tcM_L$ is an isomorphism for all $\chi \in \Q$, we see then that
\[m_1 u^{\ell_1 + j},\dots, m_a u^{\ell_a +j}\]
are generators of $V^{(\lambda+j)+k + 1 - |L|}\tcM_L$ for all $j\in \Z$.

Let $\ell = \min\{\ell_1,\dots, \ell_a\}$. Note that the only operators in $V^0$ which decrease the power of $u$ are $z_1,\dots, z_r$. Thus, we see that for any $b < \ell + j$, we have
\[ m u^b \in V^{(\lambda+j)+ k +1 - |L|}\tcM_L \implies m u^b \in (z_1,\dots, z_r) V^{(\lambda+j)+k+1-|L|}\tcM_L,\]
and so for all $j$ with $j > k - \ell$, we have
\[ m \in \cU^{\lambda +j}\cM \implies m \in \sum_{i=1}^r t_i \cU^{\lambda+j-a_i}\cM.\]

Finally, it is clear that $L(t\de_t) - \lambda + |L|$ is nilpotent on ${\rm Gr}_{\cU}^\lambda(\cM)$, using the relation \eqref{eq-EulerRelated}. 
\end{proof}

Define the bi-filtered Rees module $R_{F,L}(\cM) = \bigoplus_{k,j} F_k {}^L V^j \cM z^k u^{-j}$, which is a module over $R_{F,L}(\cD_T)$.

\begin{lem} \label{lem-EventuallyFiltAcyclic} Consider the $\Z$-indexed ${}^L V$-filtration ${}^L V^\bullet \cM$ on $\cM$. Then $R_{F,L}(\cM)$ is coherent over $R_{F,L}(\cD_T)$.

In particular, for integer $k \gg 0$, the complex $A^k_L(\cM,F)$ is filtered acyclic.
\end{lem}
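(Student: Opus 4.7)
The plan is to use the deformation to the normal bundle developed in this subsection to reduce the claim to the already-established hypersurface case. Concretely, the module $\tcM_L = j_{L*}(M \boxtimes \Q^H_{\mathbf G_m}[1])$ on $\widetilde{T}^L$ underlies a mixed Hodge module (being the pushforward along an open embedding of an exterior product of mixed Hodge modules), so by Remark~\ref{rmk-goodnessOfFOnV0} applied to the smooth projection $\widetilde{T}^L \to \A^1_u$ (whose zero fiber is the hypersurface $V(u)$), the Rees module
\[ R_F(V^0 \tcM_L) \;=\; \bigoplus_{p \in \Z} F_p V^0 \tcM_L \cdot z^p \]
is coherent over the Rees ring $R_F(\cD_{\widetilde{T}^L/\A^1_u})$ of relative differential operators along $u$.

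Next I would invoke Proposition~\ref{prop-VFiltLSpec} to unpack the left-hand side. Since $\lambda = 0 \geq 0$, the filtered statement there applies and yields
\[ R_F(V^0 \tcM_L) \;=\; \bigoplus_{p,k \in \Z} F_p {}^L V^{|L|-k-1}\cM \cdot u^k z^p. \]
Under the ring isomorphism $R_F(\cD_{\widetilde{T}^L/\A^1_u}) \cong R_{F,L}(\cD_T)$ of Lemma~\ref{lem-LArtinRees} (which matches the action of $u$ on both sides), this is identified, via the reindexing $j = |L|-k-1$, with a grading shift of $R_{F,L}(\cM) = \bigoplus_{p,j} F_p {}^L V^j \cM \cdot z^p u^{-j}$ by $u^{|L|-1}$. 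Since coherence of a bigraded module over a bigraded ring is insensitive to such shifts, this transfers the coherence of $R_F(V^0 \tcM_L)$ over $R_F(\cD_{\widetilde{T}^L/\A^1_u})$ to the desired coherence of $R_{F,L}(\cM)$ over $R_{F,L}(\cD_T)$, proving the first assertion.

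For the second assertion, I would simply plug this coherence into Lemma~\ref{lem-EventuallyFiltAcyclicGood} with $U^\bullet = {}^L V^\bullet$, which then produces an integer $k_0$ such that $A^k_L(\cM,F)$ is $F$-filtered acyclic for all $k \geq k_0$.

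The main obstacle, to the extent there is one, is purely bookkeeping: one must carefully track both the Hodge/Rees grading in $z$ and the normal-bundle grading in $u$ so that the ring isomorphism of Lemma~\ref{lem-LArtinRees} transports the correct module structure between $R_F(V^0 \tcM_L)$ and $R_{F,L}(\cM)$, and one must observe that the shift by $u^{|L|-1}$---which is forced on us because the filtered form of Proposition~\ref{prop-VFiltLSpec} requires $\lambda \geq 0$ and so precludes using the naturally matching value $\lambda = 1-|L|$---does not obstruct coherence.
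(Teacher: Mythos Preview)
Your proposal is correct and follows essentially the same approach as the paper: invoke Remark~\ref{rmk-goodnessOfFOnV0} to obtain coherence of $R_F(V^0\tcM_L)$ over $R_F(\cD_{\widetilde{T}^L/\A^1_u})$, then use Proposition~\ref{prop-VFiltLSpec} and the ring isomorphism of Lemma~\ref{lem-LArtinRees} to identify this (up to a grading shift) with $R_{F,L}(\cM)$ over $R_{F,L}(\cD_T)$, and finally apply Lemma~\ref{lem-EventuallyFiltAcyclicGood}. Your explicit tracking of the shift by $u^{|L|-1}$ is more detailed than the paper's terse ``up to a shift of grading,'' but the argument is the same.
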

\begin{proof} The second claim follows immediately from the first using \lemmaref{lem-EventuallyFiltAcyclicGood}.

The first claim follows from the observation that, up to a shift of grading, we have $R_{F,L}(\cM) = R_F(V^0 \widetilde{\cM}_L)$ and the isomorphism $R_F(\cD_{\widetilde{T}^L/\A^1_u}) \cong R_{F,L}(\cD_T)$ from \lemmaref{lem-LArtinRees}. We know by Remark \ref{rmk-goodnessOfFOnV0} that $R_F(V^0\widetilde{\cM}_L)$ is coherent over $R_F(\cD_{\widetilde{T}^L/\A^1_u})$, proving the claim.
\end{proof}

Define ${\rm Sp}_L(M) = \psi_u(\widetilde{M}_L)$, which is a mixed Hodge module on $X\times \A^r_z$, where we use $z_i = \frac{t_i}{u^{a_i}}$ as above. Its underlying filtered $\cD$-module is given by
\[ F_p {\rm Sp}_L(\cM) = \bigoplus_{\chi \in \Q} F_p {\rm Gr}_L^\chi(\cM),\]
which is seen by taking $\bigoplus_{\lambda \in (0,1]} {\rm Gr}_V^{\lambda}(\widetilde{\cM}_L)$ and using the formulas of \propositionref{prop-VFiltLSpec}.

This is an example of an \emph{$L$-monodromic} mixed Hodge module, i.e., one whose underlying $\cD$-module is $L$-monodromic. Recall that this means that every local section $m$ is annihilated by some polynomial in $L(z\de_z) = \sum_{i=1}^r a_i z_i \de_{z_i}$. Such modules decompose into generalized eigenspaces for the operator $L(z\de_z)$: we write 
\[ \cN = \bigoplus_{\chi \in \Q} \cN^\chi,\]
where $\cN^{\chi} = \bigcup_{j\geq 1} \ker((L(z\de_z) - \chi + |L|)^j)$. Any $L$-monodromic module $\cN$ carries a nilpotent $\cD$-linear endomorphism $N$ which acts on $\cN^\chi$ by $L(z\de_z) - \chi + |L|$. 

Note that any sub-object or quotient object of an $L$-monodromic $\cD$-module is also $L$-monodromic. Given a short exact sequence
\[ 0 \to \cN_1 \to \cN_2 \to \cN_3 \to 0\]
of $L$-monodromic $\cD$-modules, for any $\chi \in \Q$, the sequence
\[ 0 \to \cN_1^\chi \to \cN_2^\chi \to \cN_3^\chi \to 0\]
is an exact sequence of ${\rm Gr}_L^0 \cD$-modules.

\begin{eg} Consider $\cO_T = \cO_X[t_1,\dots, t_r]$. This is $L$-monodromic for any slope $L$. Indeed, for any $h\in \cO_X$, we have
\[ L(t\de_t)(h t^\alpha) = L(\alpha) ht^\alpha.\]

Hence, for this fixed $L$, we have
\[ \cO_T^\chi = \bigoplus_{L(\alpha) = \chi - |L|} \cO_X t^\alpha.\]
\end{eg}

We use the fact that the complex $B^0_L(\cM)$ computes the $\cD$-module theoretic restriction. This shows that ${\rm Sp}_L(\cM)$ can be used to compute the restriction. The proposition below is shown following the proof for the usual $V$-filtration \cite{SaitoMHM}*{Pg. 269}:
\begin{prop} \label{prop-SpComputesRestr} Let $M$ be a mixed Hodge module on $T$. There are canonical quasi-isomorphisms in $D^b({\rm MHM}(X))$:
\[ \sigma^!(M) \cong \sigma^! {\rm Sp}_L(M)\]
\[ \sigma^*(M) \cong \sigma^* {\rm Sp}_L(M).\]
\end{prop}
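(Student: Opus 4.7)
The plan is to compute $\sigma^!\,{\rm Sp}_L(M)$ directly from the explicit description of the underlying filtered $\cD_{X \times \A^r_z}$-module of ${\rm Sp}_L(M)$ given just above, and then to invoke the identifications already established in \lemmaref{lem-DModuleRestriction} and \theoremref{thm-main}. Concretely, since the underlying $\cO$-module of ${\rm Sp}_L(\cM)$ is $\bigoplus_{\chi \in \Q} {\rm Gr}_L^\chi(\cM)$, with $z_i$ acting as $t_i$ (shifting the $\Q$-grading by $+a_i$) and $\de_{z_i}$ as $\de_{t_i}$, the Koszul complex
\[ \sigma^!\,{\rm Sp}_L(\cM) \;=\; {\rm Kosz}\bigl({\rm Sp}_L(\cM);\;z_1,\dots,z_r\bigr) \]
splits canonically along the $\Q$-grading as the direct sum $\bigoplus_{\chi} B^\chi_L(\cM)$, and the induced Hodge filtration on the $\chi$-summand is precisely the one defined in the introduction (using $F_p {\rm Sp}_L(\cM)=\bigoplus_\chi F_p {\rm Gr}_L^\chi(\cM)$ together with the normalizing shift by $r$ that comes from the Koszul description of $\sigma^!$).

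Given this identification, \lemmaref{lem-DModuleRestriction} shows that $B^\chi_L(\cM)$ is acyclic for every $\chi \neq 0$, so that the inclusion of the $\chi=0$ summand is a quasi-isomorphism $B^0_L(\cM) \hookrightarrow \sigma^!\,{\rm Sp}_L(\cM)$. Composing this with the filtered quasi-isomorphism $\sigma^!(\cM) \cong B^0_L(\cM)$ of \theoremref{thm-main} produces the desired canonical quasi-isomorphism at the level of filtered $\cD$-modules; naturality in $M$ is immediate from the functoriality of both $B^\chi_L(-)$ and ${\rm Sp}_L$.

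The main subtlety—and the step I expect to require the most care—is to promote this to a quasi-isomorphism in $D^b{\rm MHM}(X)$, i.e.\ to check compatibility with the weight filtration and the underlying $\Q$-perverse sheaf structure. The weight filtration on ${\rm Sp}_L(M) = \psi_u(\widetilde M_L)$ is the one induced from the relative monodromy filtration of the nilpotent operator $N$ coming from $u\de_u$, which on each summand ${\rm Gr}_L^\chi(\cM)$ coincides with the relative monodromy filtration appearing in \theoremref{thm-main}; the weight statement in the last part of that theorem then supplies the required identification of the associated graded pieces as polarizable Hodge modules. An alternative, more conceptual route—closer to Saito's argument for the usual $V$-filtration in \cite{SaitoMHM}*{p.~269}—is to apply $\widetilde\sigma^!$ to $\widetilde M_L$ for $\widetilde\sigma \colon X\times \A^1_u \hookrightarrow \widetilde T^L$, $(x,u)\mapsto (x,0,u)$, use the transverse nature of $\widetilde\sigma$ with respect to $\{u=0\}$ to identify $\widetilde\sigma^!(\widetilde M_L)$ over $X\times \A^1_u$ with $j_{*}\bigl(\sigma^!(M)\boxtimes \Q^H_{\mathbf G_m}[1]\bigr)$, and then compare $\psi_u$ of both sides via the base-change compatibility of $\psi_u$ with $\widetilde\sigma^!$; this automatically produces the isomorphism with its full mixed Hodge module structure.
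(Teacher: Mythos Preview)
Your primary approach is circular: you invoke \theoremref{thm-main} (for both the Hodge and weight filtrations) to prove this proposition, but in the paper \propositionref{prop-SpComputesRestr} is an input to the proof of \theoremref{thm-main}. Indeed, the very first reduction in the proof of \theoremref{thm-main} is ``by \propositionref{prop-SpComputesRestr} we can replace $M$ by ${\rm Sp}_L(M)$ to assume that $M$ is $L$-monodromic.'' So you cannot use the filtered quasi-isomorphism $\sigma^!(\cM,F)\cong B^0_L(\cM,F)$ here, nor the weight statement; only the $\cD$-module level identification from \lemmaref{lem-DModuleRestriction} is available at this point.

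The paper's argument avoids this by working entirely at the level of $D^b{\rm MHM}$ via the open--closed triangle $\sigma_*\sigma^! M \to M \to j_*j^* M \xrightarrow{+1}$. One checks that ${\rm Sp}_L\circ\sigma_*=\sigma_*$ (Kashiwara) and that ${\rm Sp}_L$ commutes with $j_*j^*$ up to canonical isomorphism; applying ${\rm Sp}_L$ to the triangle and comparing with the same triangle for ${\rm Sp}_L(M)$ then forces $\sigma_*\sigma^! M \cong \sigma_*\sigma^!{\rm Sp}_L(M)$. The $\cD$-module computation from \lemmaref{lem-DModuleRestriction} is used only to see that the cones of the auxiliary comparison maps vanish. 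Your ``alternative route'' via $\widetilde\sigma^!$ and base change for $\psi_u$ is in the right spirit and could be made to work, but it is not the paper's argument and you have only sketched it; if you pursue it, you must justify the commutation of $\psi_u$ with $\widetilde\sigma^!$ carefully (this requires some non-characteristic or transversality condition, and $\widetilde\sigma^!(\widetilde M_L)$ lives in the derived category, so the compatibility is not automatic).
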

\begin{proof} The first claim follows for underlying $\cD$-modules by \lemmaref{lem-DModuleRestriction}. Indeed, 
\[
\sigma^!(\cM) \cong B_L^0(\cM) = B_L^0({\rm Sp}_L(\cM)) \cong \sigma^! {\rm Sp}_L(\cM),\]
where the complexes are identified because 
\[ {\rm Sp}_L(\cM)^{\chi} = {\rm Gr}_V^{\chi}(\cM),\]
compatibly with the action of $t_1,\dots, t_r$.

Kashiwara's equivalence shows that ${\rm Sp}_L \circ \sigma_* = \sigma_*$, i.e., that specialization is the identity on modules supported on $X\times \{0\}$.

Let $j \colon T \setminus (X\times \{0\}) \to T$ be the inclusion of the complement of the zero section. We have morphisms
\[ j_*j^* {\rm Sp}_L(M) \to j_* j^* {\rm Sp}_L(j_* j^*(M))\]
\[ {\rm Sp}_L(j_* j^*(M)) \to j_* j^* {\rm Sp}_L(j_* j^*(M)).\]

The cones of these morphisms vanish because their underlying complexes of $\cD$-modules do. Thus, these morphisms are quasi-isomorphisms.

Thus, starting with
\[ \sigma_* \sigma^!(M) \to M \to j_* j^*(M) \xrightarrow[]{+1},\]
when we apply ${\rm Sp}_L(-)$, we get
\[\sigma_* \sigma^!(M) \to {\rm Sp}_L(M) \to j_*j^*{\rm Sp}_L(M) \xrightarrow[]{+1},\]
which gives a canonical isomorphism
\[ \sigma_* \sigma^!(M) \cong \sigma_*\sigma^! {\rm Sp}_L(M),\]
by, for example, \cite{DirksRadon}*{Lem. 4.4}. The point is that, although cones are not canonical in triangulated categories, they are for morphisms of the form $M' \to j_* j^*(M')$.

The second claim follows from the first by duality, using that ${\rm Sp}_L(-)$ commutes with duality.
\end{proof}

\begin{rmk} The proof above shows that there are canonical quasi-isomorpisms
\[ j_* j^* {\rm Sp}_L(M) \cong {\rm Sp}_L(j_* j^*(\cM)),\]
\[ j_! j^* {\rm Sp}_L(M) \cong {\rm Sp}_L(j_! j^*(\cM)).\]
    
\end{rmk}

We record the following useful fact about $L$-monodromic mixed Hodge modules, with an important observation in the pure case.

\begin{lem} \label{lem-pureLMonodromic} Assume $M$ is $L$-monodromic on $X\times \A^r_z$. Then there is a canonical isomorphism $M \cong {\rm Sp}_L(M)$.

Therefore, if $M$ is pure, we have $N = 0$ on $M$, i.e., $L(z\de_z)$ acts semi-simply on $\cM$.
\end{lem}
\begin{proof} The second claim follows from the first because the weight filtration on ${\rm Sp}_L(M)$ is the monodromy filtration for $N$ (using the definition in terms of nearby cycles). Hence, purity is equivalent to $N = 0$.

Let $\sigma \colon X \times \{0\} \to X\times \A^r_z$ and $j\colon X \times (\A^r_z \setminus \{0\}) \to X\times \A^r_z$ be the natural morphisms.

For the first claim, we have the exact triangles in $D^b({\rm MHM}(X\times \A^r_z))$, here we identify $X\times \A^r_z$ with the normal bundle of $X\times \A^r_z$ along the zero section:
\[ \sigma_* \sigma^! M \to M \to j_* j^*(M) \xrightarrow[]{+1},\]
\[ \sigma_* \sigma^! {\rm Sp}_L(M) \to {\rm Sp}_L(M) \to j_*j^*({\rm Sp}_L(M)) \xrightarrow[]{+1}.\]

By \propositionref{prop-SpComputesRestr} and its proof, we have a morphism of triangles
\[ \begin{tikzcd} j_*j^*(M)[-1] \ar[r]\ar[d] & \sigma_* \sigma^!(M) \ar[r]\ar[d] & M \ar[r,"+1"] \ar[d] & {}\\ j_*j^*({\rm Sp}_L(M))[-1] \ar[r] & \sigma_* \sigma^!({\rm Sp}_L(M)) \ar[r] & {\rm Sp}_L(M) \ar[r,"+1"] & {}\end{tikzcd},\]
where the first two vertical maps are isomorphisms. By the same argument as in \cite{DirksRadon}*{Lem. 4.4}, the third vertical map is unique and is an isomorphism.
\end{proof}

We end this section by mentioning the existence of the relative monodromy filtration on ${\rm Gr}_L^\lambda(\cM)$ for the nilpotent operator $L(t\de_t) - \lambda + |L|$ and for the induced filtration 
\[
M_\bullet {\rm Gr}_L^\lambda(\cM) = {\rm Gr}_L^{\lambda}(W_\bullet \cM).
\]
We denote this filtration $W_\bullet {\rm Gr}_L^\lambda(\cM)$. 

As above, this allows us to define $W_\bullet B^0_L(\cM)$ and $W_\bullet C^0_L(\cM)$, which will be the weight filtration on the complex $B^0_L(\cM)$ and $C^0_L(\cM)$, respectively.

The following can be shown exactly as in the proof of \cite{CD}*{Lem. 6.2}. It says that there exists a splitting of $M_\bullet$ on ${\rm Gr}^W_k {\rm Gr}_L^{\lambda}(\cM)$ which is functorial in a certain sense.

\begin{lem} \label{lem-canonicalSplitBComplex} There exists a splitting of $M_\bullet {\rm Gr}^W_k {\rm Gr}_L^{\lambda}(\cM)$ which is functorial with respect to each of the morphisms:
\[
\begin{aligned}
    t_i \colon {\rm Gr}^W_k {\rm Gr}_L^{\lambda}(\cM) \to {\rm Gr}^W_k {\rm Gr}_L^{\lambda+a_i}(\cM), \\
\de_{t_i} \colon {\rm Gr}^W_k {\rm Gr}_L^{\lambda}(\cM) \to {\rm Gr}^W_k {\rm Gr}_L^{\lambda-a_i}(\cM),     
\end{aligned} 
\]

In particular, the complexes ${\rm Gr}^W_k B^0_L(\cM)$ and ${\rm Gr}^W_k C^0_L(\cM)$ split into their associated graded pieces for the filtration $M_\bullet$
\end{lem}
\begin{proof}[Proof Sketch] The proof uses an idea of Deligne and we refer to~\cite{CD}*{6.4} for the definition of \emph{Deligne-system}. For every $\lambda\in \Q$, Consider the tuple 
\[
(V^\lambda,W_\bullet,N)=({\rm Gr}^M_\bullet{\rm Gr}_L^\lambda \cM,W_\bullet{\rm Gr}^M_\bullet{\rm Gr}_L^\lambda\cM, L(t\de_t) - \lambda + \lvert L \rvert),
\] 
where $V^\lambda$ is taken in the category of filtered $\cD$-modules. 
Let $Y$ be the operator on ${\rm Gr}^M_\bullet{\rm Gr}_L^\lambda \cM$ such that $Y=i$ on  ${\rm Gr}^M_i{\rm Gr}_L^\lambda \cM$ for every $i$. By definition,
\[
    [Y, N] = -2N, \quad \text{and} \quad YW_i \subset W_i \quad \text{for all }i.
\]
These are called \emph{admissibility conditions}. Moreover, by~\cite{SaitoMHM}*{1.5}, there is a canonical splitting
\[
   {\rm Gr}^M_\bullet{\rm Gr}_L^\lambda \cM \cong \bigoplus_{i\in \mathbf{Z}} {\rm Gr}^M_\bullet{\rm Gr}^W_i{\rm Gr}_L^\lambda \cM
\] 
and hence, the set of the splitting operator for $W_\bullet{\rm Gr}^M_\bullet{\rm Gr}_L^\lambda\cM$ commuting with $Y$ is nonempty. Recall that a splitting operator $H$ for $W_\bullet$ is an operator such that $H$ is semisimple and that $W_i= \bigoplus_{k\leq i} E_k(H)$. Then by~\cite{CD}*{Theorem 6.12}, there is a unique splitting operator $Y'$ commuting with $Y$ of $W_\bullet{\rm Gr}^M_\bullet{\rm Gr}_L^\lambda\cM$ such that $(V^\lambda,W_\bullet,N,Y,Y')$ forms a Deligne-system\footnote{To avoid confusion, we point out that we use here different notation from~\cite{CD}*{Theorem 6.12}, the relative monodromy filtration $M$ is denoted by $W$ there, the induced weight filtration $W$ is denoted by $L$ there, and finally ${\rm Gr}_L$ here means the associated graded of ${}^LV$-filtration}. Then the morphism 
\[
t_i\colon {\rm Gr}^M_\bullet{\rm Gr}_L^\lambda \cM \to {\rm Gr}^M_\bullet{\rm Gr}_L^{\lambda+a_i} \cM
\]
induces a morphism of Deligne-systems between
\[
(V^\lambda,W_\bullet,N,Y,Y') \quad \text{and} \quad (V^{\lambda+a_i},W_\bullet,N,Y,Y')
\] 
It follows from~\cite{CD}*{Corollary 6.13} that $[t_i, Y']=0$. 

Similarly, the map $\de_{t_i} \colon {\rm Gr}^M_\bullet {\rm Gr}_L^{\lambda}(\cM) \to {\rm Gr}^M_\bullet {\rm Gr}_L^{\lambda-a_i}(\cM)(-1)$ also induces a morphism between Deligne-systems 
\[
    (V^\lambda,W_\bullet,N,Y,Y') \quad \text{and} \quad (V^{\lambda-a_i}(-1),W_\bullet,N,Y,Y')
\]
and the same argument as above concludes the proof.
\end{proof}

This lemma will be the key to allow us to reduce to the pure case. Indeed, we have that ${\rm Gr}^M_i {\rm Gr}^W_k B^0_L(\cM) = {\rm Gr}^W_k B^0_L({\rm Gr}^W_i \cM)$ and ${\rm Gr}^M_i {\rm Gr}^W_k C^0_L(\cM) = {\rm Gr}^W_k C^0_L({\rm Gr}^W_i \cM)$.

\section{Cyclic Coverings} \label{sect-CyclicCovers} Let $(a_1,\dots, a_r) \in \Z^r_{\geq 1}$. We consider the map $\pi \colon X \times \A^r_w \to X\times \A^r_t$ which sends $(x,w)$ to $(x,w^a)$. For any $(b_1,\dots, b_r) \in \Z_{\geq 0}^r$, we define the slope $\ell = \sum_{i=1}^r b_i s_i$ and consider $L = \ell * a = \sum_{i=1}^r (a_ib_i) s_i$.

Given $\cM$ a regular holonomic $\cD_{X\times \A^r_t}$-module, the pull-back $\pi^! \cM$ agrees with the $\cO$-module pull-back \cite{HTT}*{Thm. 7.1.1}, and is given by
\[ \pi^! \cM = \bigoplus_{0\leq \beta \leq \underline{a-1}} \cM w^\beta,\]
where $\beta \leq \underline{a-1}$ means $\beta_i \leq a_i -1$ for all $1\leq i\leq r$. 

The $\cD$-module action is given by \cite{HTT}*{Sect. 1.3}
\[ P(mw^\beta) = P(m)w^{\beta} \text{ for all } P \in \cD_X,\]
\[ w_i(m w^\beta) = \begin{cases} m w^{\beta + e_i} & \beta_i < a_i -1\\ t_i m w^{\beta - (a_i-1)e_i} & \beta_i = a_i -1\end{cases},\]
\[ \de_{w_i}(m w^\beta) = \begin{cases} (\beta_i + a_i t_i\de_{t_i})(m) w^{\beta-e_i} & \beta_i > 0 \\ a_i \de_{t_i}(m) w^{\beta +(a_i-1)e_i} & \beta_i = 0\end{cases}.\]

Thus, $w_i \de_{w_i}(mw^\beta) = (\beta_i + a_i t_i \de_{t_i})(m) w^\beta$ and so
\[ \ell(w \de_w)(mw^\beta) = (\ell(\beta) + L(t\de_t))(m) w^\beta.\]
 
The first main result of this section is the following:
\begin{thm} \label{thm-cyclicVFilt} Let $\cM$ be a regular holonomic $\cD_{X\times \A^r_t}$-module which is $\Q$-specializable. For any slope $\ell = \sum_{i=1}^r b_i s_i$, write $L = \ell*a$. Then, for all $\lambda \in \Q$, we have
\[ {}^\ell V^\lambda \pi^! \cM = \bigoplus_{0\leq \beta \leq \underline{a-1}} {}^L V^{\lambda + |L| - \ell(\beta) -|\ell|}\cM w^\beta.\]
\end{thm}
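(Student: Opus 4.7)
The plan is to apply the uniqueness characterization of the ${}^\ell V$-filtration given in \propositionref{prop-characterizeLV}. I would define
\[ U^\lambda \pi^!\cM := \bigoplus_{0 \le \beta \le \underline{a-1}} {}^L V^{\lambda + |L| - \ell(\beta) - |\ell|}\cM \, w^\beta \]
and check the four hypotheses of that proposition: (i) compatibility with ${}^\ell V^\bullet \cD_{X\times \A^r_w}$, (ii) the eventual $t$-adic equality $U^\lambda = \sum_i w_i U^{\lambda - b_i}$ for $\lambda \gg 0$, (iii) nilpotency of $\ell(s) + \lambda$ on ${\rm Gr}^\lambda_U(\pi^!\cM)$, and (iv) ${}^\ell V^0 \cD_{X\times \A^r_w}$-coherence of each $U^\lambda \pi^!\cM$. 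Discreteness and left-continuity of $U^\bullet$ are inherited from ${}^L V^\bullet \cM$.

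First I would dispatch the nilpotency, since this is what motivates the shift $\lambda + |L| - \ell(\beta) - |\ell|$. Using the identity $\ell(w\de_w)(mw^\beta) = (\ell(\beta) + L(t\de_t))(m)\, w^\beta$ recorded just above the statement, the operator $\ell(w\de_w) - \lambda + |\ell|$ acts on the $\beta$-summand of ${\rm Gr}^\lambda_U$ as $L(t\de_t) - \bigl(\lambda + |L| - \ell(\beta) - |\ell|\bigr) + |L|$, which is nilpotent on ${\rm Gr}_L^{\lambda + |L| - \ell(\beta) - |\ell|}(\cM)$ by the defining property of ${}^L V^\bullet \cM$. Next I would verify compatibility by a direct calculation with the explicit action formulas for $w_i, \de_{w_i}$ given above: splitting into the cases $\beta_i < a_i - 1$ (which just shifts $\beta_i$ by $1$, matching the $-b_i$ shift in the filtration index coming from $w_i \in {}^\ell V^{b_i}$) and $\beta_i = a_i - 1$ (which cycles $\beta_i$ back to $0$ and produces a factor of $t_i$), and using $t_i \colon {}^L V^\chi \cM \to {}^L V^{\chi + a_i b_i}\cM$ and $\de_{t_i} \colon {}^L V^\chi \cM \to {}^L V^{\chi - a_i b_i}\cM$ together with the preservation of ${}^L V^\chi$ under $t_i\de_{t_i}$.

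For the eventual $t$-adic property, the components with $\beta' \neq 0$ are handled trivially: if $\beta'_i > 0$, then $w_i U^{\lambda - b_i}$ alone fills the entire $\beta'$-summand of $U^\lambda$. The interesting case is $\beta' = 0$, where only the terms $w_i \cdot (m w^{(a_i-1)e_i})$ contribute, giving $\sum_i t_i \,{}^L V^{\lambda + |L| - a_i b_i - |\ell|}\cM\, w^0$; by the eventual $t$-adic property of ${}^L V^\bullet \cM$ itself (a consequence of \lemmaref{lem-DModuleRestriction} applied at large $\chi$, using that $a_ib_i$ is the coefficient of $s_i$ in $L$), this equals ${}^L V^{\lambda + |L| - |\ell|}\cM \, w^0$ for $\lambda \gg 0$. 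Finally, for coherence I would use that the operators $w_i^{a_i}$ and $w_i\de_{w_i}$, which lie in ${}^\ell V^0\cD_{X\times \A^r_w}$, realize respectively the actions of $t_i$ and $(\beta_i + a_i t_i\de_{t_i})$ on the $w^\beta$-summand; together with $\cD_X$ this covers a set of generators for the action of ${}^L V^0\cD_{X\times \A^r_t}$ on each $\cM w^\beta$, and the known coherence of ${}^L V^\chi \cM$ over ${}^L V^0\cD_{X\times \A^r_t}$ therefore transfers to ${}^\ell V^0\cD_{X\times \A^r_w}$-coherence of each summand, hence of the finite direct sum.

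The main obstacle I anticipate is clean index bookkeeping: the shifts involving $|L|$, $|\ell|$, $\ell(\beta)$, and $a_ib_i$ must line up consistently across all four checks, and the coherence verification has a minor subtlety in that one must actually produce operators in ${}^\ell V^0 \cD_{X \times \A^r_w}$ realizing a generating set for ${}^L V^0\cD_{X \times \A^r_t}$ on each summand, rather than just asserting this abstractly.
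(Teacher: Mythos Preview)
Your overall strategy—verify the axioms of \propositionref{prop-characterizeLV} for the candidate filtration $U^\bullet$—is exactly the paper's strategy, and your treatment of compatibility, the eventual $t$-adic equality, and nilpotency is correct and matches what the paper dismisses as ``trivial to see.''

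The gap is in the coherence step, and it is not a minor subtlety but the entire content of the proof. Your claim that $\cD_X$ together with $t_i$ and $t_i\de_{t_i}$ (equivalently, the operators realized by $w_i^{a_i}$ and $w_i\de_{w_i}$) ``covers a set of generators for the action of ${}^L V^0\cD_{X\times \A^r_t}$'' is false as soon as $r\ge 2$: the ring ${}^L V^0\cD_T$ contains cross-terms such as $t_i\de_{t_j}$ whenever $a_ib_i\ge a_jb_j$, and these do not lie in the subalgebra generated by $\cD_X$, the $t_i$, and the $t_i\de_{t_i}$. One can try to realize such cross-terms via $w_i^{a_i}\de_{w_j}^{a_j}\in {}^\ell V^0\cD_W$, but a direct computation shows that $\de_{w_j}^{a_j}$ acts on the $w^\beta$-summand as a nontrivial polynomial in $\de_{t_j}$ and $t_j\de_{t_j}$, not as a scalar multiple of $\de_{t_j}$; so this does not recover $t_i\de_{t_j}$ either. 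Consequently, the transfer-of-coherence argument does not go through as written, and it is not clear how to repair it by elementary means (one would need to know, for instance, that ${}^L V^\chi\cM$ is already coherent over the smaller ring $\cD_X[t]\langle t_i\de_{t_i}\rangle$, which is not established anywhere for general regular holonomic $\cM$).

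The paper handles coherence by a completely different route. It interprets the $\Z$-indexed Rees module $R_U(\pi^!\cM)$ as (a shift of) the relative pull-back $\Pi^! R_{{}^L V}(\cM)$ along the induced finite map $\Pi\colon \widetilde{W}^\ell\to\widetilde{T}^L$ of deformation spaces. By \lemmaref{lem-ReesRegRelHolo}, $R_{{}^L V}(\cM)$ is a \emph{regular} relative holonomic $\cD_{\widetilde{T}^L/\A^1_u}$-module, and the paper then invokes \cite{RelRH}*{Thm.~2}, which says that pull-back preserves regular relative holonomicity (whereas it need not preserve coherence for arbitrary relative holonomic modules). This yields coherence of $R_U(\pi^!\cM)$ over $R_{{}^\ell V}(\cD_W)\cong \cD_{\widetilde{W}^\ell/\A^1_u}$, i.e., goodness of the $\Z$-indexed filtration $U^\bullet$, which is exactly what is needed.
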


\begin{rmk} We will only apply this in the case where $\cM$ is $L$-monodromic, where it becomes essentially trivial. 
\end{rmk}

Before proving this, we study cyclic coverings of the corresponding deformations to the normal bundle. Let $T = X\times \A^r_t$ and $W = X\times \A^r_w$. We consider
\[ \widetilde{T}^L = X \times \A^r_{z} \times \A^1_u\]
\[ \widetilde{W}^\ell = X\times \A^r_{\zeta} \times \A^1_u,\]
where now we should think of $z_i = \frac{t_i}{u^{a_ib_i}}$ and $\zeta_i = \frac{w_i}{u^{b_i}}$. In particular, the morphism $W \to T$ lifts to a finite morphism
\[ \Pi \colon \widetilde{W}^\ell \to \widetilde{T}^L, \quad (x,\zeta,u) \mapsto (x,\zeta^{a_i},u).\]

By \lemmaref{lem-LArtinRees} above, we have isomorphisms
\[ \cD_{\widetilde{T}^L/\A^1_u} \cong R_L(\cD_T),\]
\[ \cD_{\widetilde{W}^\ell/\A^1_u} \cong R_{\ell}(\cD_W).\]

The functor $\Pi^!$ for relative $\cD$-modules agrees with the $\cO$-module pull-back (by flatness of $\Pi$). The functor $\Pi^!$ need not preserve coherence of relative $\cD$-modules. Indeed, even imposing relative holonomicity, coherence is not preserved, see \cite{RelRH}*{Example 2.4}. However, \cite{RelRH}*{Thm. 2} shows that the pull-back of a \emph{regular} relative holonomic $\cD$-module is regular relative holonomic, in particular, coherent.

Recall that a regular relative holonomic $\cD$-module is defined \cite{RelRHTwistor}*{Def. 2.1} to be any relative $\cD$-module $\cN$ with the property that, for all $\lambda \in \C$ (corresponding to a closed point of $\A^1_u$), with fiber $i_\lambda \colon \widetilde{T}^L_\lambda \hookrightarrow \widetilde{T}^L$, all cohomology modules of $L i_\lambda^*(\cN)$ (the derived $\cO$-module restriction) are regular holonomic $\cD_{\widetilde{T}^L_\lambda}$-modules.

\begin{lem} \label{lem-ReesRegRelHolo} Let $\cM$ be a regular holonomic $\cD_T$-module which is $\Q$-specializable. Then for any $\mu \in [0,1)$, the module $R_{L,\mu}(\cM) = \bigoplus_{k\in \Z} {}^L V^{\mu -k}(\cM) u^k$ is a relative regular holonomic $\cD_{\widetilde{T}^L/\A^1_u}$-module.
\end{lem}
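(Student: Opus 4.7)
The idea is to realize $R_{{}^LV}(\cM)$ as a piece of the Kashiwara--Malgrange $V$-filtration along $u$ on the regular holonomic $\cD_{\widetilde{T}^L}$-module $\tcM_L$, and then conclude relative regular holonomicity from the coherence of this piece together with the regular holonomicity of the ambient module.

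First I would form $\tcM_L = j_{L*}(\cM\boxtimes\cO_{\mathbf G_m})$ on $\widetilde{T}^L$. Since $\cM$ is regular holonomic and $j_L$ is an open embedding, $\tcM_L$ is regular holonomic on $\widetilde{T}^L$. The $\cD$-module computation in \propositionref{prop-VFiltLSpec}---which goes through for any regular holonomic $\cM$ via the characterization of ${}^LV$ in \propositionref{prop-characterizeLV}---yields
\[
V^{1-|L|}\tcM_L \;=\; \bigoplus_{k\in \Z} {}^LV^{-k}\cM\cdot u^k,
\]
and the substitution $k \mapsto -k$, combined with the ring isomorphism $\cD_{\widetilde{T}^L/\A^1_u}\cong R_L(\cD_T)$ of \lemmaref{lem-LArtinRees}, identifies the right-hand side with $R_{{}^LV}(\cM)$ as an $R_L(\cD_T)$-module. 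Thus $R_{{}^LV}(\cM)$ sits inside $\tcM_L$ as a $\cD_{\widetilde{T}^L/\A^1_u}$-submodule, and it is coherent over this subring: the goodness of the $\Z$-indexed ${}^LV$-filtration gives coherence over $R_L(\cD_T)\cong\cD_{\widetilde{T}^L/\A^1_u}$, along the lines of the argument used in \lemmaref{lem-EventuallyFiltAcyclic}, using Noetherianity of $R_L(\cD_T)$ from \lemmaref{lem-ArtinRees}.

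It remains to upgrade this coherent embedding to the statement of relative regular holonomicity in the sense of \cite{RelRH}. I expect this matching of definitions to be the main obstacle of the argument: one must verify that a $\cD_{\widetilde{T}^L/\A^1_u}$-coherent submodule of the absolutely regular holonomic $\cD_{\widetilde{T}^L}$-module $\tcM_L$ is automatically relative regular holonomic. Concretely, this reduces to two claims: the relative characteristic variety of $R_{{}^LV}(\cM)$ is contained in $\Lambda\times T^*\A^1_u$ for a conic Lagrangian $\Lambda\subseteq T^*(X\times\A^r_z)$ cut out from the characteristic variety of $\tcM_L$, and the regularity condition along strata descends from the absolute regularity of $\tcM_L$. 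Both should follow formally from the coherent embedding $R_{{}^LV}(\cM)\hookrightarrow \tcM_L$ together with the regular holonomicity of $\tcM_L$, and I would unwind the definitions in \cite{RelRH} directly to verify them.
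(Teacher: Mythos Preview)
Your approach differs from the paper's and leaves the crucial step as an acknowledged gap. The paper does not embed $R_{{}^LV}(\cM)$ into $\tcM_L$ and then try to descend absolute regularity; instead it uses the fiberwise characterization of relative regular holonomicity directly. For each $\lambda\in\C$ one computes the derived restriction $i_\lambda^!\big(R_{{}^LV}(\cM)\big)$ along $\{\lambda\}\hookrightarrow\A^1_u$: for $\lambda\neq 0$ inverting $u-\lambda$ collapses the Rees module to $\cM$, while for $\lambda=0$ one gets $\bigoplus_k {}^LV^k\cM/{}^LV^{k+1}\cM \cong {\rm Sp}_L(\cM)$. Both are regular holonomic, and by the criterion in \cite{RelRH} this is exactly what is needed.

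Your proposed route, by contrast, asks for the implication ``$\cD_{\widetilde{T}^L/\A^1_u}$-coherent submodule of an absolutely regular holonomic $\cD_{\widetilde{T}^L}$-module $\Rightarrow$ relative regular holonomic,'' and you concede this is the main obstacle. It is not a formality: relative holonomicity is a condition on the \emph{relative} characteristic variety, and an arbitrary coherent relative submodule of a holonomic absolute module need not have its relative characteristic variety of the form $\Lambda\times\A^1_u$ (one must rule out components that project badly to the base). Regularity is even less transparent to transfer this way. You would end up re-proving the fiberwise criterion anyway. So replace the last paragraph of your plan with the two-line fiber computation above; everything before that (the identification $V^{1-|L|}\tcM_L\cong R_{{}^LV}(\cM)$ and coherence) is correct but, for this lemma, unnecessary.
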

\begin{proof} For $\lambda \in \C$, let $i_\lambda \colon \widetilde{T}^L_\lambda \to \widetilde{T}^L$ be the inclusion of the fiber over $\lambda$. We have
\[ L i_\lambda^*(R_L(\cM)) = [ R_L(\cM) \xrightarrow[]{u-\lambda} R_L(\cM)] \cong \begin{cases} \cM & \lambda \neq 0 \\ {\rm Sp}_L(\cM) & \lambda = 0\end{cases},\]
where in the last isomorphism, we use that ${}^L V^{\mu+k} \cM/ {}^L V^{\mu+k+1}\cM = \bigoplus_{\chi \in [\mu+k,\mu+k+1)} {\rm Gr}_L^\chi(\cM)$ for all $k\in \Z$. 

As $\cM$ and ${\rm Sp}_L(\cM)$ are regular holonomic $\cD_T$-modules, this proves that $R_{L}(\cM)$ is regular relative holonomic.
\end{proof}

\begin{proof}[Proof of \theoremref{thm-cyclicVFilt}] We will show that the $\Q$-indexed filtration
\[ U^\bullet \pi^! \cM = \bigoplus_{0\leq \beta \leq \underline{a-1}} {}^L V^{\bullet+|L|-\ell(\beta) - |\ell|} \cM w^\beta\]
satisfies the defining properties of the ${}^\ell V$-filtration. It is certainly discretely, left-continuously indexed and exhaustive.

It is trivial to see that $w_i U^\bullet \subseteq U^{\bullet+b_i}$,  $\de_{w_i} U^\bullet \subseteq U^{\bullet-b_i}$, and that $(\ell(w\de_w)- \lambda + |\ell|)$ is nilpotent on ${\rm Gr}_U^\lambda(\pi^! \cM)$ using the equality
\[ (\ell(w\de_w) - \lambda +|\ell|)(mw^\beta) = (L(t\de_t) - \lambda + \ell(\beta) + |\ell|)(mw^\beta).\]

The last remaining property is coherence of $U^\chi \pi^! \cM$ over ${}^\ell V^0\cD_{X\times \A^r_w}$ for all $\chi \in \Z$. For this claim,  we can focus on the $\Z$-indexed filtration $U^{\mu+\bullet}\pi^! \cM$ for some $\mu \in [0,1)$ fixed.

By \lemmaref{lem-ReesRegRelHolo}, the module $R_{L,\mu}(\cM)$ is regular relative holonomic on $\widetilde{T}^L$. Hence, $\Pi^!$ applied to it remains regular relative holonomic, in particular, coherent. We conclude by showing that this pull-back is isomorphic to $R_{U,\mu}'(\pi^! \cM) = \bigoplus_{k\in \Z} U^{\mu-k-|L|+|\ell|} \pi^! \cM u^k$, proving the claim. Indeed, by coherence of the graded module over $R_{\ell}(\cD_{\widetilde{W}/\A^1_u})$, we conclude that each graded piece is coherent over the $0$th graded piece of that ring, which is ${}^\ell V^0 \cD_{X\times \A^r_w}$.

Recall that the pull-back is simply the $\cO$-module pull-back. For the cyclic cover $\Pi \colon \widetilde{W}^\ell \to \widetilde{T}^L$, we see that
\[ \Pi^!(R_{L,\mu}(\cM)) = \bigoplus_{ 0 \leq \beta \leq \underline{a-1}} R_{L,\mu}(\cM) \zeta^\beta = \bigoplus_{k\in \Z, 0\leq \beta \leq \underline{a-1}} {}^L V^{\mu-k} \cM u^k \zeta^\beta.\]

On the other hand, we have
\[ R_{U,\mu}'(\pi^!(\cM)) = \bigoplus_{k \in \Z} U^{\mu-k-|L|+|\ell|} \pi^!(\cM) u^k = \bigoplus_{k\in \Z, 0 \leq \beta \leq \underline{a-1}} {}^L V^{\mu-k-\ell(\beta)}\cM w^\beta u^k.\]

We define an isomorphism
\[ \Pi^!(R_{L,\mu}(\cM)) \to R_{U,\mu}'(\pi^!(\cM)), \quad m u^k \zeta^\beta \mapsto m u^{k - \ell(\beta)} w^\beta,\]
and it is easy to check that this is a $\cD_{\widetilde{W}^\ell/\A^1_u}$-linear isomorphism. Note that the morphism is motivated by the equality $u^k \zeta^\beta = u^k (\frac{w_1}{u^{b_1}})^{\beta_1} \dots (\frac{w_r}{u^{b_r}})^{\beta_r} = u^{k-\ell(\beta)} w^\beta$.
\end{proof}

\begin{rmk} \label{rmk-cyclicHypersurfaceVFilt} Let $\ell = e_i$, so that ${}^\ell V^\bullet \pi^!\cM$ is the $V$-filtration along $w_i$.

Then $L = a_i e_i$, and it is not hard to check that
\[ {}^L V^\lambda \cM = V_i^{\frac{\lambda}{a_i}} \cM,\]
where the right hand side is the canonical $V$-filtration along $t_i$. Thus, \theoremref{thm-cyclicVFilt} gives the formula
\[ V_i^\lambda \pi^!(\cM) = \bigoplus_{0\leq \beta \leq \underline{a-1}} V_i^{1 + \frac{\lambda - \beta_i - 1}{a_i}}\cM w^\beta.\]
\end{rmk}

For now, we let $r = 1$. We will apply this case later with $X\times \A^{r-1}_t$ in place of $X$. So we consider $\pi \colon X \times \A^1_w \to X\times \A^1_t$ defined by $(x,w) \mapsto (x,w^a)$ for some $a\in \Z_{\geq 1}$. Let $j_w \colon \{w\neq 0\} \to W = X\times \A^1_w$ and $j_t \colon \{t\neq 0\} \to T = X\times \A^1_t$ be the open embeddings. Define the restriction $\pi\vert_{\{w\neq 0\}}$ to be $\rho \colon \{w\neq0\} \to \{t\neq0\}$, which is finite \'{e}tale.

Let $M$ be a pure Hodge module of weight $d$ on $X\times \A^1_t$ with strict support not contained in $\{t = 0\}$. In general, $\pi^! M$ need not remain pure, but we know that $W_{d-1} \pi^! M = 0$.

\begin{lem} \label{lem-structureWd} We can identify $W_d \pi^!(\cM)$ with the minimal extension $j_{!*}(\rho^!(M\vert_{t\neq 0}))$. In particular, $W_d \pi^!(\cM)$ has no quotients supported on $\{w = 0\}$.
\end{lem}
\begin{proof} We have the inclusion $W_d \pi^!(\cM) \subseteq \pi^!(\cM)$, and $\pi^!(\cM)$ has no sub-modules supported in $\{w=0\}$ by adjunction, using the fact that $\cM$ has no sub-modules supported on $\{t=0\}$.

Thus, $W_d \pi^!(\cM)$ has no sub-modules supported in $\{w=0\}$. By polarizability (hence, self-duality), it also admits no quotient objects supported in $\{w=0\}$, proving the claim.
\end{proof}

Thus, we have $W_d \pi^!(\cM) = \cD_{W} \cdot V^{>0}W_d \pi^!(\cM)$ by \lemmaref{lem-strictSuppDMod}, where $V^\bullet$ is the $V$-filtration along $w$. Note that both $W_d \pi^!(\cM)$ and $\pi^!(\cM)$ restrict to $\rho^!(\cM\vert_{\{t\neq0\}})$ on $\{w\neq 0\}$, and so we have equality (by \lemmaref{lem-PositivePieces})
\[V^{>0} W_d \pi^!(\cM) = V^{>0} \pi^!(\cM) = \bigoplus_{0 \leq b \leq a-1} V^{>1- \frac{b+1}{a}}\cM w^b,\]
where the second equality comes from Remark \ref{rmk-cyclicHypersurfaceVFilt}.

We see from the $\cD_W$-module action that $W_d \pi^!(\cM)$ is a graded sub-module of $\pi^!(\cM)$. Throughout, we let
\[ W_d \pi^!(\cM)^b \subseteq \pi^!(\cM)^b\]
denote the $b$th graded piece, i.e., the coefficient of $w^b$.

\begin{lem} We have
\[ V^0 W_d \pi^!(\cM) = \bigoplus_{0\leq b <  a-1} \left(N^{(b)} \cdot V^{1 - \frac{b+1}{a}}\cM + V^{>1-\frac{b+1}{a}}\cM \right)w^b \oplus V^0 \cM w^{a-1},\]
where $N^{(b)}= (t\de_t - (1-\frac{b+1}{a}) + 1)$ is the nilpotent endomorphism of ${\rm Gr}_V^{1-\frac{b+1}{a}}(\cM)$.
\end{lem}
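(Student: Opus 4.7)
The plan is to identify $W_d\pi^!(\cM)$ via its purity (it is the intermediate extension of $\pi^!(\cM)|_{\{w\neq 0\}}$ from $\{w\neq 0\}$ to $W$) and then to read off $V^0 W_d\pi^!(\cM)$ by computing the image of $\de_w$ on ${\rm Gr}_V^1(\pi^!(\cM))$. By \lemmaref{lem-structureWd}, the quotient $Q = \pi^!(\cM)/W_d \pi^!(\cM)$ is supported on $\{w=0\}$, so \exampleref{eg-Kashiwara} gives $V^{>0} Q = 0$. Applying \lemmaref{lem-VFiltStrict} to $0 \to W_d\pi^!(\cM) \to \pi^!(\cM) \to Q \to 0$ then yields $V^{>0}\pi^!(\cM) \subseteq W_d\pi^!(\cM)$, hence ${\rm Gr}_V^1(W_d\pi^!(\cM)) = {\rm Gr}_V^1(\pi^!(\cM))$, and also ${\rm Gr}_V^0(W_d\pi^!(\cM)) \hookrightarrow {\rm Gr}_V^0(\pi^!(\cM))$. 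Since $W_d\pi^!(\cM)$ has no quotient supported on $\{w=0\}$ (again by \lemmaref{lem-structureWd}), \lemmaref{lem-strictSuppDMod} says that $\de_w$ surjects ${\rm Gr}_V^1(W_d\pi^!(\cM))$ onto ${\rm Gr}_V^0(W_d\pi^!(\cM))$. Combining these,
\[
{\rm Gr}_V^0(W_d \pi^!(\cM)) \;=\; \operatorname{image}\bigl(\de_w \colon {\rm Gr}_V^1(\pi^!(\cM)) \to {\rm Gr}_V^0(\pi^!(\cM))\bigr) \;\subseteq\; {\rm Gr}_V^0(\pi^!(\cM)).
\]

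Next I would compute this image component by component. By Remark \ref{rmk-cyclicHypersurfaceVFilt},
\[
{\rm Gr}_V^1(\pi^!(\cM))^b = {\rm Gr}_V^{1-b/a}(\cM)\, w^b, \qquad {\rm Gr}_V^0(\pi^!(\cM))^b = {\rm Gr}_V^{1-(b+1)/a}(\cM)\, w^b
\]
for $0 \le b \le a-1$. For $b \geq 1$ the formula $\de_w(m w^b) = (b + a t\de_t) m \cdot w^{b-1}$ shows $\de_w$ sends ${\rm Gr}_V^1(\pi^!(\cM))^b$ into ${\rm Gr}_V^0(\pi^!(\cM))^{b-1}$ via the operator $a(t\de_t + b/a) = a\, N^{(b-1)}$ on the common underlying piece ${\rm Gr}_V^{1-b/a}(\cM)$, so the image is exactly $N^{(b-1)} {\rm Gr}_V^{1-b/a}(\cM)$. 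For $b=0$, the formula $\de_w m = a \de_t m \cdot w^{a-1}$ places the image in the $w^{a-1}$-slot and equals $a\de_t\, {\rm Gr}_V^1(\cM)$, which by \lemmaref{lem-strictSuppDMod} applied to $\cM$ (using that its strict support is not $\{t=0\}$) is all of ${\rm Gr}_V^0(\cM)$.

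Finally, by strictness $V^0 W_d\pi^!(\cM)$ is the preimage in $V^0 \pi^!(\cM)$ of ${\rm Gr}_V^0(W_d\pi^!(\cM))$ under reduction modulo $V^{>0}\pi^!(\cM) = \bigoplus_b V^{>1-(b+1)/a}\cM \cdot w^b$. Lifting the image computation above component-wise gives, on $w^b$ with $0 \leq b \leq a-2$, the submodule $N^{(b)} V^{1-(b+1)/a}\cM + V^{>1-(b+1)/a}\cM$, and on $w^{a-1}$ all of $V^0 \cM$, which is the asserted formula. The only conceptual step is the identification of ${\rm Gr}_V^0(W_d\pi^!(\cM))$ with $\operatorname{image}(\de_w)$, which depends crucially on the purity plus ``no sub/quotient at $\{w=0\}$'' input from \lemmaref{lem-structureWd}; the rest is direct bookkeeping with the $\cD_W$-action on $\pi^!(\cM)$ recalled at the start of Section \ref{sect-CyclicCovers}.
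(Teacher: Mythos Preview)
Your argument is correct and follows essentially the same route as the paper: both identify $V^0 W_d\pi^!(\cM)$ as $\de_w V^1\pi^!(\cM) + V^{>0}\pi^!(\cM)$ using the surjectivity of $\de_w$ on ${\rm Gr}_V$ (from \lemmaref{lem-strictSuppDMod} combined with the last claim of \lemmaref{lem-structureWd}), and then compute $\de_w$ component by component, invoking the analogous surjectivity for $\cM$ to identify the $w^{a-1}$-coefficient. Your write-up makes the intermediate identification ${\rm Gr}_V^1(W_d\pi^!(\cM)) = {\rm Gr}_V^1(\pi^!(\cM))$ more explicit than the paper does, but the logic is the same.
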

\begin{proof} This follows from $V^0 W_d \pi^!(\cM) = \de_w V^1 \pi^!(\cM) + V^{>0}\pi^!(\cM)$, which holds by the last claim of \lemmaref{lem-structureWd} and the surjectivity
\[ \de_w \colon {\rm Gr}_V^1(W_d \pi^!(\cM)) \to {\rm Gr}_V^0(W_d \pi^!(\cM))\]
which holds by \lemmaref{lem-strictSuppDMod}. To identify the coefficient of $w^{a-1}$, we use the similar formula $V^0\cM = \de_t V^1\cM + V^{>0}\cM$ which also follows by \lemmaref{lem-strictSuppDMod}.
\end{proof}

\begin{lem} \label{lem-HodgeFiltWd} The inclusion
\[ W_d \pi^!(\cM) \to \pi^!(\cM)\]
induces an equality for all $p\in \Z$ and $\lambda > 0$,
\[ F_p V^\lambda W_d \pi^!(\cM) = F_p V^{\lambda} \pi^!(\cM) = \bigoplus_{0\leq b \leq a-1} F_p V^{1+ \frac{\lambda-b-1}{a}}(\cM) w^b,\]
and for $\lambda = 0$, we get equality for all $p\in \Z$:
\[ F_p V^0 W_d \pi^!(\cM)^{a-1} = F_p V^0 \pi^!(\cM)^{a-1} = F_p V^0(\cM) w^{a-1}.\]
\end{lem}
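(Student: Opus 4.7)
The plan is to compare $V^\lambda W_d\pi^!(\cM)$ with $V^\lambda \pi^!(\cM)$ via the short exact sequence $0 \to W_d\pi^!(\cM) \to \pi^!(\cM) \to Q \to 0$ from \lemmaref{lem-structureWd}, whose cokernel $Q$ is supported on $\{w=0\}$. By Kashiwara's equivalence (\exampleref{eg-Kashiwara}), $V^\lambda Q = 0$ whenever $\lambda > 0$, so strictness of the $V$-filtration (\lemmaref{lem-VFiltStrict}) yields an isomorphism $V^\lambda W_d\pi^!(\cM) \cong V^\lambda \pi^!(\cM)$ for $\lambda > 0$. The inclusion $W_d\pi^!(\cM) \hookrightarrow \pi^!(\cM)$ is a sub-mixed Hodge module and hence strict for the Hodge filtration, so the same equality persists at the filtered level: $F_p V^\lambda W_d\pi^!(\cM) = F_p \pi^!(\cM) \cap V^\lambda W_d\pi^!(\cM) = F_p V^\lambda \pi^!(\cM)$ for every $p$ and every $\lambda > 0$.

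To make the right-hand side explicit, I would combine \remarkref{rmk-cyclicHypersurfaceVFilt}, which gives $V^\lambda \pi^!(\cM) = \bigoplus_{b=0}^{a-1} V^{1+(\lambda-b-1)/a}\cM \cdot w^b$, with the description $F_p \pi^!(\cM) = \bigoplus_{b=0}^{a-1} F_p\cM \cdot w^b$ coming from the fact that $\pi$ is finite flat of relative dimension $0$. Both filtrations respect the $\cO_W$-module grading by powers of $w$, so their intersection splits summand by summand and delivers the claimed decomposition.

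The case $\lambda = 0$ restricted to the $w^{a-1}$-component is an immediate matching of two computations. From the previous lemma, $V^0 W_d\pi^!(\cM)^{a-1} = V^0\cM \cdot w^{a-1}$, while \remarkref{rmk-cyclicHypersurfaceVFilt} (noting $1 + (-a)/a = 0$) gives the same for $V^0\pi^!(\cM)^{a-1}$. Intersecting with the Hodge filtration on the $w^{a-1}$-piece (using $F$-strictness of $W_d\pi^!(\cM)\hookrightarrow \pi^!(\cM)$ to reduce to intersecting $F_p\cM \cdot w^{a-1}$ with $V^0\cM \cdot w^{a-1}$) identifies both with $F_pV^0\cM \cdot w^{a-1}$.

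The main subtlety I expect is justifying the Hodge filtration formula $F_p\pi^!(\cM) = \bigoplus_b F_p\cM \cdot w^b$: since the cyclic cover $\pi$ is ramified along $\{w=0\}$, one must check that Saito's Hodge filtration on $\pi^!(\cM)$ is indeed the naive $\cO$-module pullback of $F_p\cM$, with no correction. This is ultimately a consequence of the flatness of $\pi$ and the fact that it has relative dimension zero, so that no filtration shift enters, but it is the step where one has to be careful about conventions.
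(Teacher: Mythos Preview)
Your argument for the equality $F_p V^\lambda W_d\pi^!(\cM) = F_p V^\lambda \pi^!(\cM)$ when $\lambda > 0$ is fine and essentially matches the paper: both modules restrict to $\rho^!(\cM\vert_{\{t\neq 0\}})$ on $\{w\neq 0\}$, and $V^\lambda$ together with the Hodge filtration is determined by this restriction.

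The genuine gap is exactly where you flagged it: the formula $F_p\pi^!(\cM) = \bigoplus_b F_p\cM\, w^b$. Your justification (``flatness and relative dimension zero, so no filtration shift enters'') is not sufficient. Saito's Hodge filtration on $\pi^!(\cM)$ is defined implicitly through the theory, and for a ramified cover there is no general principle identifying it with the naive $\cO$-module pullback of $F_p\cM$. Whether or not this formula happens to be true here, it would itself require a proof at least as involved as the lemma you are trying to establish.

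The paper avoids this problem entirely. It never computes $F_p\pi^!(\cM)$; instead it uses \lemmaref{lem-strictSupportHodge} (and the unnamed lemma preceding it): since $\pi^!(\cM)$ has no $w$-torsion, one has
\[
F_p V^\lambda \pi^!(\cM) \;=\; V^\lambda \pi^!(\cM) \,\cap\, j_*\bigl(F_p \rho^!(\cM\vert_{\{t\neq 0\}})\bigr)
\quad \text{for all } \lambda \geq 0.
\]
On the \'etale locus $\{w\neq 0\}$ the Hodge filtration \emph{is} the naive pullback, so $j_*(F_p\rho^!(\cM\vert_{\{t\neq 0\}})) = \bigoplus_b j_{t,*}(F_p\cM\vert_{\{t\neq 0\}})\,w^b$, and intersecting with the known $V^\lambda\pi^!(\cM)$ (Remark~\ref{rmk-cyclicHypersurfaceVFilt}) gives the stated decomposition directly. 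For the $\lambda = 0$, $b=a-1$ piece, the paper applies \lemmaref{lem-strictSupportHodge} to $W_d\pi^!(\cM)$ (which, by \lemmaref{lem-structureWd}, has no quotient supported on $\{w=0\}$) to write $F_pV^0 W_d\pi^!(\cM) = \de_w(F_{p-1}V^1\pi^!(\cM)) + F_pV^{>0}\pi^!(\cM)$, and then reads off the $w^{a-1}$ coefficient using the analogous identity for $\cM$. This again stays on the \'etale side and never touches $F_p\pi^!(\cM)$ globally.
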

\begin{proof} We already saw the equality
\begin{equation} \label{eq-HodgeVWd} F_p V^\lambda W_d \pi^!(\cM) = F_p V^{\lambda} \pi^!(\cM) \text{ for all } \lambda > 0\end{equation}
above, using the fact that both restrict to $\rho^!(\cM\vert_{\{t\neq0\}})$ on $\{w\neq 0\}$.

For $\lambda = 0$, we have by Lemmas \ref{lem-structureWd} and \ref{lem-strictSupportHodge} the equality
\[ F_p V^0 W_d \pi^!(\cM) = \de_{w_r}(F_{p-1} V^1 W_d \pi^!(\cM)) + F_p V^{>0}W_d\pi^!(\cM)\]
and by the equality \eqref{eq-HodgeVWd}, the right hand side is equal to
\[ \de_{w_r}(F_{p-1} V^1 \pi^!(\cM)) + F_p V^{>0}\pi^!(\cM).\]

Using the fact that $\pi^!(\cM)$ has no $w_r$-torsion, we know by \lemmaref{lem-strictSupportHodge} that for all $\lambda \geq 0$ that we have
\begin{equation}\label{eq-HodgePositiveV} F_p V^\lambda \pi^!(\cM) = V^\lambda \pi^!(\cM) \cap j_*(F_p \rho^!(\cM\vert_{t_r\neq 0\}})) = \bigoplus_{0\leq b \leq a-1} F_p V^{1+\frac{\lambda-b-1}{a}}(\cM) w^b.\end{equation}

Finally, using that $\cM$ has strict support not contained in $\{t_r =0\}$, we get by \lemmaref{lem-strictSupportHodge} $F_p V^0 \cM = \de_t(F_{p-1} V^1 \cM) + F_p V^{>0}\cM$, which is the coefficient of $w^{a-1}$ in $F_p V^0 W_d \pi^!(\cM)$.
\end{proof}

We define $Q = \pi^!(M)/ W_d \pi^!(M)$, and denote the underlying $\cD$-module by $\cQ$. By Kashiwara's equivalence and \lemmaref{lem-structureWd}, we have $Q = i_* Q_0$ where $i\colon \{w=0\} \to W$ is the closed embedding and the underlying $\cD$-module of $Q_0$ is $\cQ_0 = \ker(w) \subseteq \cQ$. Note that by \exampleref{eg-Kashiwara}, we have $V^0 \cQ = \cQ_0$.

\begin{cor} In the above notation, we have equality
\[\cQ_0 = V^0\cQ = \bigoplus_{0\leq b < a-1} {\rm coker}\left(N^{(b)} \colon {\rm Gr}_V^{1-\frac{b+1}{a}}(\cM) \to {\rm Gr}_V^{1-\frac{b+1}{a}}(\cM)\right).\]

In particular, $\pi^!(\cM)$ is pure if and only if the $V$-filtration of $\cM$ along $t$ has no jumping numbers in $\frac{1}{a} \Z \setminus \Z$, i.e., ${\rm Gr}_V^{\lambda}(\cM) \neq 0$ implies $\lambda \notin \frac{1}{a} \Z \setminus \Z$.
\end{cor}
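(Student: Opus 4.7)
The plan is to combine the explicit descriptions of $V^0 \pi^!(\cM)$ and $V^0 W_d \pi^!(\cM)$ from the two preceding lemmas, compute the quotient in each graded piece $w^b$, and then translate the vanishing condition for this cokernel into a statement about jumping numbers of the $V$-filtration of $\cM$.

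The first step is to identify $\cQ_0$ with $V^0 \cQ$. By \lemmaref{lem-structureWd} the quotient $\cQ$ is supported on $\{w=0\}$, so Kashiwara's equivalence (\exampleref{eg-Kashiwara}) applies to give $\cQ_0 = \ker(w : \cQ \to \cQ) = V^0 \cQ$. By the strictness of $V$-filtrations under short exact sequences (\lemmaref{lem-VFiltStrict}), we have
\[
V^0 \cQ \;=\; V^0 \pi^!(\cM) \big/ V^0 W_d \pi^!(\cM).
\]
Remark~\ref{rmk-cyclicHypersurfaceVFilt} computes the numerator, while the lemma immediately preceding the corollary computes the denominator, and both decompositions are compatible with the $w^b$-grading. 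In each graded piece $0 \le b \le a-1$, the coefficient of $w^b$ in $V^0 \pi^!(\cM)$ is $V^{1-\frac{b+1}{a}}\cM$, while the corresponding coefficient of $V^0 W_d \pi^!(\cM)$ is $N^{(b)}\cdot V^{1-\frac{b+1}{a}}\cM + V^{> 1-\frac{b+1}{a}}\cM$ for $b < a-1$, and is $V^0 \cM$ for $b = a-1$. The piece at $b=a-1$ therefore contributes zero to the cokernel, and the piece at $0 \le b < a-1$ contributes
\[
V^{1-\frac{b+1}{a}}\cM \big/ \bigl(N^{(b)}\cdot V^{1-\frac{b+1}{a}}\cM + V^{> 1-\frac{b+1}{a}}\cM\bigr) \;=\; \operatorname{coker}\bigl(N^{(b)} : {\rm Gr}_V^{1-\frac{b+1}{a}}(\cM) \to {\rm Gr}_V^{1-\frac{b+1}{a}}(\cM)\bigr),
\]
which uses that $N^{(b)}$ preserves $V^{1-\frac{b+1}{a}}\cM$ and descends to the nilpotent operator on the graded piece. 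Taking the direct sum over $b$ yields the desired formula.

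For the "in particular" statement, observe first that $\pi^!(\cM)$ is pure of weight $d$ if and only if $Q = 0$, which (again by Kashiwara's equivalence for $Q$) is equivalent to $\cQ_0 = 0$. Since each $N^{(b)}$ is nilpotent on ${\rm Gr}_V^{1-\frac{b+1}{a}}(\cM)$, its cokernel is zero if and only if ${\rm Gr}_V^{1-\frac{b+1}{a}}(\cM) = 0$ itself. Thus $\pi^!(\cM)$ is pure iff ${\rm Gr}_V^{\lambda}(\cM) = 0$ for every $\lambda \in \{\tfrac{a-1}{a}, \tfrac{a-2}{a}, \dots, \tfrac{1}{a}\}$, i.e., for every $\lambda \in \tfrac{1}{a}\Z \setminus \Z$ lying in $(0,1)$. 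Finally, since the maps $t:{\rm Gr}_V^{\lambda}(\cM)\to{\rm Gr}_V^{\lambda+1}(\cM)$ and $\de_t : {\rm Gr}_V^{\lambda+1}(\cM) \to {\rm Gr}_V^{\lambda}(\cM)$ are isomorphisms for all $\lambda \ne 0$, and every element of $\tfrac{1}{a}\Z\setminus\Z$ is a $\Z$-translate of some element of $\tfrac{1}{a}\Z\cap(0,1)$ and never equals $0$, the vanishing in $(0,1)$ is equivalent to ${\rm Gr}_V^{\lambda}(\cM) = 0$ for all $\lambda \in \tfrac{1}{a}\Z\setminus\Z$, completing the proof.

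The only genuinely subtle step is the identification of the $b$-th cokernel with $\operatorname{coker}(N^{(b)})$ on the graded piece; all the structural input is already packaged in the two preceding lemmas, so I expect no real obstacle beyond bookkeeping.
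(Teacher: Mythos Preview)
Your proof is correct and follows essentially the same approach as the paper: use \lemmaref{lem-VFiltStrict} to write $V^0\cQ = V^0\pi^!(\cM)/V^0 W_d\pi^!(\cM)$, plug in the explicit formulas from the two preceding lemmas, and then use nilpotency of $N^{(b)}$ for the purity criterion. Your treatment is in fact slightly more complete, since you justify why vanishing of ${\rm Gr}_V^\lambda(\cM)$ for $\lambda \in \tfrac{1}{a}\Z \cap (0,1)$ is equivalent to vanishing for all $\lambda \in \tfrac{1}{a}\Z \setminus \Z$ via the isomorphisms $t,\de_t$ on ${\rm Gr}_V^\lambda$ for $\lambda \neq 0$; the paper leaves this step implicit.
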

\begin{proof} We have $V^0 \cQ = V^0\pi^!(\cM)/V^0 W_d \pi^!(\cM)$, so the formula is obvious by the previous lemma and \lemmaref{lem-VFiltStrict}.

The last claim follows because $\pi^!(\cM)$ is pure if and only if $\cQ  =0$ if and only if $N^{(b)}$ is surjective on ${\rm Gr}_V^{1-\frac{b+1}{a}}(\cM)$ for all $0\leq b < a-1$, but since $N^{(b)}$ is nilpotent, this is equivalent to the vanishing of ${\rm Gr}_V^{1-\frac{b+1}{a}}(\cM)$.
\end{proof}

As our goal is to study $\sigma^! M$, it is natural to consider $\pi^! M$ because we have the functoriality: $\sigma^! \circ \pi^! = \sigma^!$. However, to study $\sigma^*$ using the cyclic cover, we would need to understand $\pi^* M$. The way to do this is to use duality.

However, it is not easy to be explicit in this description. In light of \lemmaref{lem-CDSGeneral} below, as far as the Koszul-like complexes are concerned, we need only understand ${\rm Gr}_V^0(\pi^*(M))$ and ${\rm Gr}_V^1(\pi^*(M))$, which is far easier.

By definition, $\pi^*(M) = \mathbf D \pi^!(\mathbf D(M)) = \mathbf D \pi^!(M(d))$, using the fact that $M$ is polarizable of weight $d$. Thus, we have isomorphisms of $\cD_X$-modules (ignoring Tate twists)
\[ {\rm Gr}_V^0(\pi^*(\cM)) \cong \mathbf D({\rm Gr}_V^0(\pi^!(\cM)))\]
\[{\rm Gr}_V^1(\pi^*(\cM)) \cong \mathbf D({\rm Gr}_V^1(\pi^!(\cM))),\]
where we use the compatibility of nearby and vanishing cycles with the dual functor \cite{SaitoDual}*{Thm. 1.6}. 

By Formula \eqref{eq-HodgePositiveV}, we have
\[{\rm Gr}_V^0(\pi^!(\cM),F) = \bigoplus_{0\leq b\leq a-1} {\rm Gr}_V^{1-\frac{b+1}{a}}(\cM,F)w^b,\]
\[{\rm Gr}_V^1(\pi^!(\cM),F) = \bigoplus_{0\leq b\leq a-1} {\rm Gr}_V^{1 - \frac{b}{a}}(\cM,F) w^b.\]

The map ${\rm Var} \colon {\rm Gr}_V^0(\pi^!(\cM),F) \to {\rm Gr}_V^1(\pi^!(\cM),F)$ decomposes along the grading: for $b < a-1$, we have
\begin{equation} \label{eq-VarId}{\rm Var} \colon {\rm Gr}_V^0(\pi^!(\cM),F)^b \to {\rm Gr}_V^1(\pi^!(\cM),F)^{b+1} = \left[{\rm Gr}_V^{1-\frac{b+1}{a}}(\cM,F) \xrightarrow[]{\rm Id} {\rm Gr}_V^{1-\frac{b+1}{a}}(\cM,F)\right]\end{equation}
and for $b = a-1$, we have
\begin{equation} \label{eq-VarVar}{\rm Var} \colon {\rm Gr}_V^0(\pi^!(\cM,F))^{a-1} \to {\rm Gr}_V^1(\pi^!(\cM,F))^{0} = \left[{\rm Gr}_V^{0}(\cM,F) \xrightarrow[]{\rm Var} {\rm Gr}_V^{1}(\cM,F)\right]\end{equation}

Moreover, as ${\rm can}\colon {\rm Gr}_V^1(\pi^*(\cM),F) \to {\rm Gr}_V^0(\pi^*(\cM),F[-1])$ is the dual of the map 
\[ {\rm Var} \colon {\rm Gr}_V^0(\pi^!(\cM)) \to {\rm Gr}_V^1(\pi^!(\cM)), \] the same applies to the can and Var maps for $\cM$. Thus, we have for $b>0$
\begin{equation} \label{eq-canId} {\rm can} \colon {\rm Gr}_V^1(\pi^*(\cM),F)^b \to {\rm Gr}_V^0(\pi^*(\cM),F[-1])^{b-1} = \left[{\rm Gr}_V^{\frac{b}{a}}(\cM,F) \xrightarrow[]{\rm Id} {\rm Gr}_V^{\frac{b}{a}}(\cM,F)\right]\end{equation}
and for $b = 0$, we have
\begin{equation} \label{eq-cancan} {\rm can} \colon {\rm Gr}_V^1(\pi^*(\cM),F)^{0} \to {\rm Gr}_V^0(\pi^*(\cM),F[-1])^{a-1} = \left[{\rm Gr}_V^{1}(\cM,F) \xrightarrow[]{\rm can} {\rm Gr}_V^{0}(\cM,F[-1])\right]\end{equation}

\begin{lem} \label{lem-compareWd} We have natural filtered isomorphisms
\[ {\rm Gr}_V^0(W_d \pi^!(\cM))^{a-1} \to {\rm Gr}_V^0(\pi^!(\cM),F)^{a-1}\]
\[ {\rm Gr}_V^0(\pi^*(\cM),F)^{a-1} \to {\rm Gr}_V^0(\pi^*(\cM)/W_{d-1}\pi^*(\cM),F)^{a-1},\]
and for any $0 \leq b \leq a-1$, we have filtered isomorphisms
\[ {\rm Gr}_V^1(W_d \pi^!(\cM),F)^{b} \to {\rm Gr}_V^1(\pi^!(\cM),F)^{b}\]
\[ {\rm Gr}_V^1(\pi^*(\cM),F)^{b} \to {\rm Gr}_V^1(\pi^*(\cM)/W_{d-1}\pi^*(\cM),F)^{b}.\]
\end{lem}
\begin{proof} All morphisms are induced by taking ${\rm Gr}_V^0$ or ${\rm Gr}_V^1$ of the natural morphisms of filtered $\cD_{X\times \A^1_w}$-modules. The first isomorphism follows from \lemmaref{lem-HodgeFiltWd} and the second follows from the first by duality.

The latter two isomorphisms come from the fact that all three modules restrict on $\{w\neq 0\}$ to $\rho^!(\cM\vert_{t\neq 0})$.
\end{proof}

We can now prove the main result of the paper.

\subsection{Proof of \theoremref{thm-main}}
We use induction on $r$, the case $r=1$ being known. For our fixed $L$, we assume without loss of generality that $a_r > 1$.

We give an outline of the proof for the reader's convenience. First of all, we reduce (by replacing $\cM$ with its $L$-specialization) to the case that $\cM$ is a strict support, pure Hodge module which is $L$-monodromic and whose support is not contained in $\{t_r =0\}$.

Following the strategy of \cite{CDS}, we consider the filtration induced by $V_r^\bullet \cM$ (the $V$-filtration along $t_r$) on the filtered complexes $B^\chi_L(\cM,F)$ and $C^\chi_L(\cM,F)$. In certain ranges, we show that taking $V_r^\beta$ (resp. ${\rm Gr}_{V_r}^\beta(-)$) gives quasi-isomorphic complexes. This leads to a quick proof of the filtered acyclicity statement of \theoremref{thm-main}.

For the statement about restriction functors, we use induction on number of coefficients of $L$ which are strictly larger than $1$ (the base case being that they are all equal to $1$, which is known by \cites{CD,CDS}). We let $\ell = L + (1-a_r)s_r$ be the linear form which agrees with $L$ except has $1$ for its $r$th coefficient. Inductively, we assume that restriction functors can be computed using the ${}^\ell V^\bullet$-filtration.

Then we use the cyclic cover $\pi \colon X \times \A^r_w \to X \times \A^r_t$ defined by 
\[
(x,w_1,\dots, w_r) \mapsto (x,w_1,\dots, w_r^{a_r}),
\]
which relates the ${}^L V^\bullet$ filtration on $\cM$ with the ${}^\ell V^\bullet$-filtration on $\pi^! \cM$. Although we do not have a good understanding of the Hodge filtration on $\pi^!(\cM)$, we do have a good understanding for the Hodge filtration on $V_r^\bullet \pi^!(\cM)$, where $V_r^\bullet$ is the $V$-filtration along $w_r$. In terms of the complexes $B^\chi_\ell(\pi^!(\cM),F)$ and $C^\chi_{\ell}(\pi^!(\cM),F)$, in the range of interest, we can consider the $V_r^\bullet$-version of those complexes. Putting all of this together allows us to prove the claim.

We now begin with the proof.

Note that by \lemmaref{lem-EventuallyFiltAcyclic}, in order to prove filtered acyclity of $A^\chi_L(\cM,F)$ for all $\chi > 0$, it suffices to prove it for $B^\chi_L(\cM,F)$ for all $\chi > 0$. Thus, we only need to consider the complexes $B_L^\bullet(\cM,F)$ and $C_L^\bullet(\cM,F)$ below. 

For any $M$ a mixed Hodge module on $X\times \A^r_t$, by \propositionref{prop-SpComputesRestr} we can replace $M$ by ${\rm Sp}_L(M)$ to assume that $M$ is $L$-monodromic. Indeed, by definition we have $B^\chi_L(\cM,F) = B^\chi_L({\rm Sp}_L(\cM),F)$ and $C^\chi_L(\cM,F) = C^\chi_L({\rm Sp}_L(\cM),F)$ for all $\chi \geq 0$.

Moreover, by the functorial splitting Lemma~\ref{lem-canonicalSplitBComplex} of the relative monodromy filtration, we can assume $M$ is $L$-monodromic and pure of weight $d$. By the strict support decomposition, we can assume $M$ has strict support not contained in $\{t_r = 0\}$. Indeed, if $M$ is supported in $\{t_r =0\}$, the result follows by induction on $r$ using \exampleref{eg-SupportedHypersurface}.

The most important technical input to the proof of \theoremref{thm-main} is the $L$-monodromic analogue of \cite{CDS}*{(2.1.4)}. For this, we let $M$ be an $L$-monodromic module on $T$. Let $V_r^\bullet \cM$ be the $V$-filtration along $t_r$. For any $\beta \in \Q$, the module ${\rm Gr}_{V_r}^\beta(\cM)$ is $L'$-monodromic, where $L ' = \sum_{i=1}^{r-1} a_i s_i$. Indeed, if we write $\cM = \bigoplus_{\chi \in \Q} \cM^\chi$, then because $t_r \de_{t_r}$ and $L(t\de_t)$ commute, we have equality $V_r^\beta \cM= \bigoplus_{\chi \in \Q} V_r^\beta \cM^\chi$, where $V_r^\beta \cM^\chi = \cM^\chi \cap V_r^\beta \cM$. If we write the monodromic decomposition as
\[ {\rm Gr}_{V_r}^\beta(\cM) = \bigoplus_{\gamma \in \Q} {\rm Gr}_{V_r}^\beta(\cM)^\gamma,\]
where by definition ${\rm Gr}_{V_r}^\beta(\cM)^{\gamma} = \bigcup_{j\geq 1} \ker(L'(t\de_t) - \gamma + |L'|)$, then using that $t_r \de_{t_r} - \beta +1$ is nilpotent on ${\rm Gr}_{V_r}^\beta(\cM)$ and that $L(t\de_t) - \chi + |L|$ is nilpotent on $\cM^\chi$, we can identify
\[ {\rm Gr}_{V_r}^\beta(\cM)^{\gamma} = {\rm Gr}_{V_r}^\beta(\cM^{\gamma+a_r \beta}) = \frac{V_r^\beta \cM^{\gamma+a_r \beta}}{V_r^{>\beta} \cM^{\gamma+a_r\beta}}.\]

Now, the filtration $V_r^\bullet$ defines filtrations on $B_L^\lambda(\cM,F)$ and $C_L^\lambda(\cM,F)$. We use the Kronecker delta notation: $\delta_{ir} = \begin{cases} 0 & i \neq r \\ 1 & i = r \end{cases}$. Then the filtration is given by
\[
{\tiny
V_r^\beta B^\lambda_L(\cM) = \left[V_r^\beta (\cM^\lambda,F[-r]) \xrightarrow[]{t_i} \bigoplus_{i=1}^r V_r^{\beta+\delta_{ir}}(\cM^{\lambda+a_i},F[-r]) \xrightarrow[]{t_i} \dots \xrightarrow[]{t_i} V_r^{\beta+1}(\cM^{\lambda+|L|},F[-r])\right]}
\]

\[
\begin{aligned}
&V_r^\beta(C^\lambda_L(\cM),F)= \\
&\left[ V_r^{\beta+1}(\cM^{|L|+\lambda},F) \xrightarrow[]{\de_{t_i}} \bigoplus_{i=1}^r V_r^{\beta+1-\delta_{ir}}(\cM^{|L|+\lambda-a_i},F[-1]) \xrightarrow[]{\de_{t_i}} \dots \xrightarrow[]{\de_{t_i}} V_r^\beta(\cM^{\lambda},F[-r])\right].
\end{aligned}
\]

\normalsize

With this definition and the standard construction showing how Koszul complexes are obtained by mapping cones of multiplication morphisms, we see that ${\rm Gr}_{V_r}^\beta B_L^\lambda(\cM,F)$ is filtered quasi-isomorphic to the mapping cone of the natural morphism
\[ B_{L'}^{\lambda-a_r \beta}({\rm Gr}_{V_r}^\beta(\cM),F) \xrightarrow[]{t_r} B_{L'}^{\lambda - a_r \beta}({\rm Gr}_{V_r}^{\beta+1}(\cM),F)\]
and similarly ${\rm Gr}_{V_r}^\beta C_L^\lambda(\cM,F)$ is filtered quasi-isomorphic to the mapping cone of the natural morphism
\[ C_{L'}^{\lambda - a_r \beta}({\rm Gr}_{V_r}^{\beta+1}(\cM),F) \xrightarrow[]{\de_{t_r}} C_{L'}^{\lambda-a_r\beta}({\rm Gr}_{V_r}^\beta(\cM),F[-1]).\]

By induction on $r$, we see then that
\[ {\rm Gr}_{V_r}^\beta B_L^\lambda(\cM,F)\text{ is filtered acyclic for all } \lambda - a_r \beta > 0,\]
because $B_{L'}^{\lambda-a_r\beta}(-)$ is filtered acyclic in this range. Similarly, 
\[ {\rm Gr}_{V_r}^\beta C_L^\lambda(\cM,F)\text{ is filtered acyclic for all } \lambda - a_r \beta < 0.\]

Moreover, using the filtered isomorphisms \eqref{cond-tFiltIso} (which we recall says $t \colon (V_r^\lambda \cN,F) \to (V_r^{\lambda+1} \cN,F)$ is a filtered isomorphism for any $\lambda > 0$ and any $(\cN,F)$ underlying a mixed Hodge module) and \eqref{cond-dFiltIso} (which says $\de_t \colon ({\rm Gr}_V^{\lambda+1}(\cN,F) \to {\rm Gr}_V^{\lambda}(\cN,F[-1])$ is a filtered isomorphism for all $\lambda < 0$ and any $(\cN,F)$ underlying a mixed Hodge module), we see that $V_r^\beta B_L^\lambda(\cM,F)$ is filtered acyclic for all $\beta > 0$ and ${\rm Gr}_{V_r}^{\beta} C_L^\lambda(\cM,F)$ is filtered acyclic for all $\beta < 0$.

\begin{lem}[\cite{CDS}*{Lem. 2.1}] \label{lem-CDS21} If for some $\lambda \in \Q$ fixed the complex ${\rm Gr}_{V_r}^\beta C^\lambda_L(\cM,F)$ is filtered acyclic for all $\beta \geq \beta_0$, then $V_r^{\beta_0} C_L^\lambda(\cM,F)$ is filtered acyclic.
\end{lem}
\begin{proof} The proof is the same as in \cite{CDS}*{Rmk. 2.1g}, though the algebraic preliminary must be slightly adjusted.

Let $R = V_r^0 {\rm Gr}_\bullet^F {\rm Gr}_L^0(\cD_{X\times \A^r})$ and let $p\in X\times \{0\}$ be an arbitrary point. Note that $V_r^\beta {\rm Gr}_\bullet^F \cM_p$ is finite graded over $V_r^0 {\rm Gr}^F_\bullet(\cD_{X \times \A^r,p})$ by Remark \ref{rmk-goodnessOfFOnV0}. Thus, as the negative degree operators in the latter ring are generated in degree $1$ by $\de_{t_1},\dots, \de_{t_{r-1}}$, we have the existence of some $\alpha_0 \ll 0$ such that
\begin{equation} \label{eq-DifficultAlgebra} V_r^\beta {\rm Gr}_\bullet^F \cM^\alpha_{p}= \sum_{j=1}^{r-1} \de_{t_j} V_r^\beta {\rm Gr}^F_{\bullet-1} \cM^{\alpha+a_j}_{p}\end{equation}
for all $\beta > 0, \alpha - a_r\beta < \alpha_0$. Indeed, first consider the finitely many $0 < \beta_0 <\beta_1 < \dots < \beta_m = 1$ corresponding to the jumping coefficients of the $V_r$-filtration in the interval $(0,1]$. If $m_1^i,\dots, m_{n_i}^i$ are graded generators for $V_r^{\beta_i} {\rm Gr}_{\bullet}\cM_p$ of degrees $\gamma_1^i,\dots, \gamma_{n_i}^i \in \Q$, then for any $\alpha < \gamma^i =\min \{\gamma_1^i,\dots, \gamma_{n_i}^i\}$, we have
\[ V_r^{\beta_i} {\rm Gr}_\bullet^F \cM^\alpha_{p}= \sum_{j=1}^{r-1} \de_{t_j} V_r^{\beta_i} {\rm Gr}^F_{\bullet-1} \cM^{\alpha+a_j}_{p}\]
and so by multiplying by $t_r^k$, using the isomorphism \ref{cond-tFiltIso}, we get
\[ V_r^{\beta_i+k} {\rm Gr}_\bullet^F \cM_p^{\alpha+a_r k} = \sum_{j=1}^{r-1} \de_{t_j} V_r^{\beta_i +k} {\rm Gr}^F_{\bullet-1} \cM_p^{\alpha+a_rk + a_j}\]
for $\alpha - a_r \beta_i < \gamma^i - a_r \beta_i$. Note that this is equivalent to the inequlaity
\[ (\alpha+a_r k) - a_r(\beta_i+k) < \gamma^i -a_r,\]
and so is of the desired form. Finally, if we take $\alpha_0 = \min\{\gamma^1 - a_r \beta_1,\dots, \gamma^m - a_r\beta_m\}$. we get the equality \eqref{eq-DifficultAlgebra}.

Iterating this, for $\beta$ fixed and any $\alpha - a_r \beta < \alpha_0$, we can write for any $N \geq 1$ the equality
\[ V_r^\beta {\rm Gr}_\bullet^F \cM_p^\alpha = \sum_{|\gamma| = N, \gamma \in \N^{r-1}} \de_t^\gamma V_r^\beta {\rm Gr}^F_{\bullet - N} \cM^{\alpha+L(\gamma)}_p.\]

There exists $N \gg 0$ and $m \gg 0$ such that if $|\gamma| \geq N$, then $\de_t^\gamma =\de_t^{\gamma'} \de_t^{\varepsilon}$ where $L(\gamma') = ma_r$. Indeed, we can take $m = a_1 \dots a_{r-1}$ and then we want to choose $N$ so that there must exist some $\gamma_i \geq \frac{m a_r}{a_i}$. Then we take $\gamma' = \frac{m a_r}{a_i} e_i$, which satisfies $L(\gamma') = ma_r$. Of course, for $N = \sum_{i=1}^{r-1} \frac{ma_r}{a_i}$, the claim holds.

Let $N, m$ be as above. Then
 \[ V_r^\beta {\rm Gr}_\bullet^F \cM^\alpha_p = \sum_{L(\gamma) = m a_r} \de_t^\gamma V_r^\beta {\rm Gr}^F_{\bullet - |\gamma|} \cM^{\alpha+m a_r}_{p},\]
 and so we get equality for any $\beta > m$ and $\alpha - \beta \leq \alpha_0$.
 \[ V_r^\beta {\rm Gr}_\bullet^F \cM^\alpha_{p} = \sum_{L(\gamma) = m a_r} (t_r^m \de_t^\gamma) V_r^{\beta - m}{\rm Gr}^F_{\bullet - |\gamma|} \cM^\alpha_p.\]
 
 Note that as $\gamma \in \N^{r-1}$, there are a finite number of $\gamma$ such that $L(\gamma) = m a_r$. Thus, the elements $\eta_\gamma = {\rm Gr}^F_{|\gamma|} (t_r^m \de_t^\gamma)$ define a finitely generated graded ideal $I$ inside $R$. Thus, the $V_r$-filtration induced on $V_r^{\beta} {\rm Gr}^F_\bullet \cM_p^\alpha$ is cofinal with the $I$-adic filtration, for any $\beta$.
 
 The proof is then identical to that of \cite{CDS}*{Lem. 2.1}, we reproduce the proof here for convenience. First, it is important to note that because $V_r^\beta {\rm Gr}^F_\bullet \cM_p$ is coherent over the $\Z$-graded Noetherian commutative ring $V_r^0 {\rm Gr}^F_\bullet \cD_{T,p}$ (using the grading by $L$, so that ${\rm Gr}^F_ 0 t_i$ has degree $a_i$ and ${\rm Gr}^F_{1}\de_{t_i}$ has degree $-a_i$), it is a standard fact that the individual graded pieces $V_r^\beta{\rm Gr}^F_\bullet \cM_p^\chi$ are coherent over the degree $0$ part of the ring $V_r^0 {\rm Gr}^F_\bullet \cD_{T.p}$. This degree zero part is clearly $R = V_r^0 {\rm Gr}^F_\bullet {\rm Gr}^0_L \cD_{T,p}$. Thus, the terms of the complex $V_r^\beta {\rm Gr}^F_\bullet C_L^\lambda(\cM)$ are finite over the ring $R$.

Our goal is to show that $V_r^{\beta_0} {\rm Gr}^F_\bullet C_L^\lambda(\cM)$ is acyclic assuming that ${\rm Gr}_{V_r}^{\beta} {\rm Gr}^F_\bullet C_L^\lambda(\cM)$ is acyclic for all $\beta \geq \beta_0$. By discreteness of the $V_r$-filtration, the assumption implies that for any $\beta_2 \geq \beta_1 \geq \beta_0$, the complex $(V_r^{\beta_1}/V_r^{\beta_2}){\rm Gr}^F_\bullet C_L^\alpha(\cM)$ is acyclic. By the short exact sequence of complexes
\[ 0 \to (V_r^{\beta_1}/V_r^{\beta_2}){\rm Gr}^F_\bullet C_L^\alpha(\cM)_p \to (V_r^{\beta_0}/V_r^{\beta_2}){\rm Gr}^F_\bullet C_L^\alpha(\cM)_p \to (V_r^{\beta_0}/V_r^{\beta_1}) {\rm Gr}^F_\bullet C_L^\alpha(\cM)_p \to 0,\]
we have that the second map is a quasi-isomorphism. Thus, for all $j$, the inverse system $\cH^j(V_r^{\beta_0}/V_r^{\beta_1}) {\rm Gr}^F_\bullet C_L^\alpha(\cM)_p$ as $\beta_1 \to \infty$ trivially satisfies Mittag-Leffler (the transition maps are isomorphisms, hence surjective). By \cite{EGA}*{Prop. 13.2.3}, this implies that the natural morphism
\[ \cH^j \varprojlim_{\beta_1}(V_r^{\beta_0}/V_r^{\beta_1}) {\rm Gr}^F_\bullet C_L^\alpha(\cM)_p \to \varprojlim_{\beta_1} \cH^j(V_r^{\beta_0}/V_r^{\beta_1}) {\rm Gr}^F_\bullet C_L^\alpha(\cM)_p\]
is an isomorphism for all $j$, and as the term on the right is zero, we see that the $V_r$-completion of the complex $V_r^{\beta_0} {\rm Gr}^F_\bullet C_L^\alpha(\cM)_p$ is acyclic.

We have the following general fact: let $R$ be a Noetherian $\N$-graded ring and $I$ an ideal generated by finitely many elements of positive degree. Then $\widehat{M}_I = 0$ if and only if $M = 0$. Indeed, the kernel of the map $M \mapsto \widehat{M}_I$ is $\bigcap_{j \geq 1} I^j M$. Thus, if $\widehat{M}_I = 0$, we get $M = \bigcap_{j\geq 1} I^j M$, which is impossible by the graded Nakayama lemma.

By the discussion above, the completion along the $V_r$-filtration is isomorphic to the completion along the ideal $I$. As the ring $R$ is graded (by the ${\rm Gr}^F_\bullet$ decomposition) and the ideal $I$ is generated by elements of positive degree, this implies that the finite graded $R$-module $\cH^j V_r^{\beta_0} {\rm Gr}^F_\bullet C_L^\alpha(\cM)$ is zero (by the previous paragraph) for all $j$, proving the claim.
\end{proof}

We can now state and prove the main technical tool. Note that this lemma contains the filtered acyclicity statement of \theoremref{thm-main}.

\begin{lem} \label{lem-CDSGeneral} Let $(\cM,F)$ underlie an $L$-monodromic mixed Hodge module on $X\times \A^r_t$. Then
\[ B^\lambda_L(\cM,F) \text{ is filtered acyclic for all } \lambda > 0,\]
\[ C^\lambda_L(\cM,F) \text{ is filtered acyclic for all } \lambda <0,\]
and the natural morphisms
\[{\rm Gr}_{V_r}^0 B_L^0(\cM,F) \leftarrow V_r^0 B_L^0(\cM,F) \to B_L^0(\cM,F),\]
\[{\rm Gr}_{V_r}^0 C_L^0(\cM,F) \leftarrow V_r^0 C_L^0(\cM,F) \to C_L^0(\cM,F)\]
are filtered quasi-isomorphisms.
\end{lem}
\begin{proof} We prove the claims for the $C$-complexes, the $B$-complexes being similar, but using the acyclicity of $V_r^\beta B_L^\lambda(\cM,F)$ for $\beta > 0$ rather than \lemmaref{lem-CDS21}.

Above, we noted that ${\rm Gr}_{V_r}^\beta C_L^\lambda(\cM,F)$ is filtered acyclic for $\beta < 0$ or for $\lambda < a_r \beta$. First, if $\lambda < 0$, the second case shows that ${\rm Gr}_{V_r}^\beta C_L^\lambda(\cM,F)$ is acyclic for all $\beta \in \Q$. But then by \lemmaref{lem-CDS21}, we see that $V_r^\beta C_L^\lambda(\cM,F)$ is filtered acyclic for all $\beta$, so taking direct limits as $\beta \to -\infty$ gives the claim.

For $\lambda = 0$, we see that
\[ {\rm Gr}_{V_r}^\beta C_L^0(\cM,F) \text{ is filtered acyclic for all } \beta \neq 0,\]
and if $\beta = \varepsilon$ is so that $V_r^{\varepsilon}\cM = V_r^{>0}\cM$, \lemmaref{lem-CDS21} shows that $V_r^{\varepsilon} C_L^0(\cM,F) = V_r^{>0}C_L^0(\cM,F)$ is filtered acyclic. This proves that the arrow
\[ {\rm Gr}_{V_r}^0 C_L^0(\cM,F) \leftarrow V_r^0 C_L^0(\cM,F) \text{ is a filtered quasi-isomorphism}.\]

But then using that $V_r^\beta C_L^0(\cM,F)$ is filtered acyclic for all $\beta < 0$, we see that the natural inclusion
\[ V_r^0 C_L^0(\cM,F) \to V_r^\beta C_L^0(\cM,F)\]
is a filtered quasi-isomorphism for all $\beta < 0$, and taking the direct limit as $\beta \to -\infty$ gives the claim.
\end{proof}

We will prove the (bi-filtered) quasi-isomorphisms \[B^0_L(\cM,F,W) \cong \sigma^!(\cM,F,W)\] and \[C^0_L(\cM,F,W) \cong \sigma^*(\cM,F,W)\] by induction on $|I_L|$ where $I_L = \{i \mid a_i > 1\}$. The case $|I_L| = 0$ is \cite{CD}*{Thm. 1, 2}. So assume by induction that we can compute restriction functors for any $\ell = \sum_{i=1}^r b_i s_i$ with $|I_\ell| < |I_L|$. Recall that we assume (without loss of generality) that $a_r = a > 1$.

Let $\pi \colon W = X\times \A^r_w \to X\times \A^r_t = T$ be the map $(x,w) \mapsto (x,w_1,w_2,\dots, w_r^a)$. We can use the results of the above sub-section for this cyclic cover, as only one variable is being raised to a power.

Let $\sigma_w \colon X \to W$ and $\sigma_t \colon X \to T$ be the inclusion of the zero section. Then $\pi \circ \sigma_w = \sigma_t$. Thus, we have natural isomorphisms
\[ \sigma_w^!(\pi^! M) \cong \sigma_t^!(M),\]
\[ \sigma_w^*(\pi^* M) \cong \sigma_t^*(M).\]

Note that $\pi^!(M)$ is $\ell$-monodromic, where $\ell = s_r + \sum_{i=1}^{r-1} a_i s_i$, as we can easily see from the definition of the $\cD$-module action.

Let $V_r^\bullet\cM$ and $V_r^\bullet\pi^!(\cM)$ be the $V$-filtration along $t_r$ (resp. $w_r$). By \lemmaref{lem-CDSGeneral} we have filtered quasi-isomorphisms
\[ {\rm Gr}_{V_r}^0 B_L^0(\cM,F) \cong B_L^0(\cM,F), \quad \text{and}\quad {\rm Gr}_{V_r}^0 B_\ell^0(\pi^!(\cM),F) \cong B_\ell^0(\pi^!(\cM),F),\]
and we have filtered quasi-isomorphisms
\[ {\rm Gr}_{V_r}^0 C_L^0(\cM,F) \cong C_L^0(\cM,F), \quad \text{and}\quad {\rm Gr}_{V_r}^0 C_\ell^0(\pi^*(\cM),F) \cong C_\ell^0(\pi^*(\cM),F).\]

The filtered quasi-isomorphisms of the $B_\ell$ and $C_\ell$ complexes are $\Z/a\Z$-graded because the Koszul differentials preserve the grading (up to a shift). Here we grade the complexes so that the $b$th graded piece corresponds to the coefficient of $w^b$ in the left-most term.

\begin{lem} \label{lem-GradedPiece} For any $b < a-1$ the complexes
\[{\rm Gr}_{V_r}^0 B_\ell^0(\pi^!(\cM),F)^b \cong B_\ell^0(\pi^!(\cM),F)^b\]
are filtered acyclic. Similarly, for $b > 0$, the complexes
\[{\rm Gr}_{V_r}^0 C_\ell^0(\pi^*(\cM),F)^b \cong C_\ell^0(\pi^*(\cM),F)^b\]
are filtered acyclic.

Thus, there are filtered quasi-isomorphisms:
\[{\rm Gr}_{V_r}^0 B_\ell^0(\pi^!(\cM),F)^{a-1} \cong B_\ell^0(\pi^!(\cM),F)^{a-1}\cong{\rm Gr}_{V_r}^0 B_\ell^0(\pi^!(\cM),F) \cong B_\ell^0(\pi^!(\cM),F)\]
\[{\rm Gr}_{V_r}^0 C_\ell^0(\pi^*(\cM),F)^0 \cong C_\ell^0(\pi^*(\cM),F)^0 \cong {\rm Gr}_{V_r}^0 C_\ell^0(\pi^*(\cM),F) \cong C_\ell^0(\pi^*(\cM),F)\]
\end{lem}
\begin{proof} The last claim is immediate from the filtered acyclicity claims. The filtered acyclicity is immediate in the cases mentioned because one of the morphisms in the Koszul complex will be the identity by the formulas \ref{eq-VarId} and \ref{eq-canId}.
\end{proof}

\begin{proof}[Proof of \theoremref{thm-main}, Restriction Functor]
We begin by proving the claim for the Hodge filtration.

By the inductive hypothesis, we know that the $B_\ell^0$ complex is strictly filtered and computes $\sigma_w^!$ and that $C_\ell^0$ is strictly filtered and computes $\sigma_w^*$. Thus, by \lemmaref{lem-GradedPiece}, we have filtered quasi-isomorphisms
\[ \sigma_t^!(\cM,F) \cong \sigma_w^!\pi^!(\cM,F) \cong {\rm Gr}_{V_r}^0 B_\ell^0(\pi^!(\cM),F)^{a-1} \]
\[ \sigma_t^*(\cM,F) \cong \sigma_w^*\pi^*(\cM,F) \cong {\rm Gr}_{V_r}^0 C_\ell^0(\pi^*(\cM),F)^0.\]

But then it follows from the formula for the Hodge pieces on $V^\lambda \pi^!(\cM)$ in \lemmaref{lem-HodgeFiltWd} for $\lambda \geq 0$ and Equation \ref{eq-VarVar} that we have a filtered quasi-isomorphism
\[{\rm Gr}_{V_r}^0 B_L^0(\cM,F) \cong {\rm Gr}_{V_r}^0 B_\ell^0(\pi^!(\cM),F)^{a-1}.\]

Finally, by Equation \eqref{eq-cancan} and the computation of ${\rm Gr}_V^0(\pi^*(\cM)), {\rm Gr}_V^1(\pi^*(\cM))$ in terms of those on $\pi^!(\cM)$, we have a filtered quasi-isomorphism
\[{\rm Gr}_{V_r}^0 C_L^0(\cM,F) \cong {\rm Gr}_{V_r}^0 C_\ell^0(\pi^*(\cM),F)^0.\]

By one more application of \lemmaref{lem-CDSGeneral}, we obtain the filtered quasi-isomorphisms

\[B_L^0(\cM,F) \cong {\rm Gr}_{V_r}^0 B_L^0(\cM,F) \cong {\rm Gr}_{V_r}^0 B_\ell^0(\pi^!(\cM),F)^{a-1} \cong \sigma_t^!(\cM,F),\]
\[C_L^0(\cM,F) \cong {\rm Gr}_{V_r}^0 C_L^0(\cM,F) \cong {\rm Gr}_{V_r}^0 C_\ell^0(\pi^*(\cM),F)^0 \cong \sigma_t^*(\cM,F),\]
proving the claim for the Hodge filtration.

For the weight filtration, as $M$ is pure $L$-monodromic, we know that the nilpotent endomorphism $N \colon M \to M$ is $0$ by \lemmaref{lem-pureLMonodromic}. Thus, the same is true for $\pi^!(M)$, though $\pi^!(M)$ need not be pure. 

Thus, for any $\chi \in \Z$, the relative monodromy filtration on ${\rm Gr}_\ell^\chi(\pi^!(\cM))$ is simply the induced filtration from $W_\bullet \pi^!(\cM)$, i.e.,
\[ W_\bullet{\rm Gr}_\ell^\chi(\pi^!(\cM)) = {\rm Gr}_\ell^\chi(W_\bullet \pi^!(\cM)),\]
and so we have
\[ W_\bullet B^0_\ell(\pi^!(\cM),F) = B^0_\ell(W_\bullet \pi^!(\cM),F).\]

By \lemmaref{lem-CDSGeneral} applied to $W_d \pi^!(M)$ and $\pi^*(M)/W_{d-1} \pi^*(M)$, we have $\Z/a\Z$-graded filtered isomorphisms
\[B_\ell^0(W_d \pi^!(\cM),F) \cong {\rm Gr}_{V_r}^0 B_\ell^0(W_d\pi^!(\cM),F),\]
\[C_\ell^0(\pi^*(\cM)/W_{d-1}\pi^*(\cM),F) \cong {\rm Gr}_{V_r}^0 C_\ell^0(\pi^*(\cM)/W_{d-1}\pi^*(\cM),F).\]

Now, by \ref{lem-compareWd}, we have natural filtered quasi-isomorphisms
\[ {\rm Gr}_{V_r}^0 B_\ell^0(\pi^!(\cM),F)^{a-1} \cong {\rm Gr}_{V_r}^0 B_\ell^0(\pi^!(\cM),F)^{a-1}\]
\[ {\rm Gr}_{V_r}^0 C_\ell^0(\pi^*(\cM),F)^0 \cong  {\rm Gr}_{V_r}^0 C_\ell^0(\pi^*(\cM)/W_{d-1}\pi^*(\cM),F)^0\]
which finishes the proof.
\end{proof}

As a corollary, we obtain the following analogue of \cite{Ginsburg}*{Prop. 10.4}.

\begin{cor} Let $\cM$ be an $L$-monodromic regular holonomic $\cD_T$-module. Let $\sigma \colon X \to T$ be the zero section and $p\colon T\to X$  the projection. Then the natural morphisms
\[ p_* \cM \to \sigma^* \cM, \ \ \sigma^! \cM \to p_! \cM\]
are isomorphisms in $D^b_{\rm rh}(\cD_X)$.

If $M$ is an $L$-monodromic mixed Hodge module, then the natural morphisms
\[ p_* M \to \sigma^* M,  \ \ \sigma^! M \to p_! M\]
are isomorphisms in $D^b({\rm MHM}(X))$.
\end{cor}
\begin{proof} The natural morphisms described in the corollary come from the following: let $j\colon T \setminus (X\times \{0\}) \to T$ be the inclusion of the complement of the zero section. Then we have exact triangles
\[ \sigma_* \sigma^!M \to M \to j_*j^*(M) \xrightarrow[]{+1},\]
\[ j_!j^*(M) \to M \to \sigma_* \sigma^*(M) \xrightarrow[]{+1}.\]

If we apply $p_!$ to the first triangle and $p_*$ to the second triangle, then using that $p\circ \sigma = {\rm id}_X$, we have
\[ \sigma^!M \to p_!M \to p_!j_*j^*(M) \xrightarrow[]{+1},\]
\[ p_*j_!j^*(M) \to p_*M \to \sigma^*(M) \xrightarrow[]{+1}.\]

The claim for $p_!$ and $\sigma^!$ follows from the claim for $p_*$ and $\sigma^*$ by duality. As the functor $D^b({\rm MHM}(X)) \to D^b_{\rm rh}(\cD_X)$ is conservative, we need only prove the claim for $\cD_T$-modules.

By definition, $p_* \cM$ is computed by appyling $Rp_*(-)$ to the relative de Rham complex for the projection $T\to X$. As $p$ is affine, $Rp_*(-) =p_*(-)$ is an exact functor, and the relative de Rham complex is the Koszul complex on $\de_{t_1},\dots, \de_{t_r}$. This de Rham complex can be identified with $\bigoplus_{\chi \in \Q} C^\chi_L(\cM)$, and by the acyclicity results of \lemmaref{lem-DModuleRestriction}, we get
\[ p_* \cM \cong \bigoplus_{\chi \in \Q} C^\chi_L(\cM) \cong C^0_L(\cM) \cong \sigma^*(\cM),\]
as desired.
\end{proof}

\section{Singularities of Subvarieties} \label{sect-singularities}
Let $Z = V(f_1,\dots, f_r) \subseteq X$ be a closed subvariety of the smooth variety $X$. Consider
\[ 
\cB_f = i_{f,+}(\cO_X)  = \bigoplus_{\alpha \in \N^r} \cO_X \de_t^\alpha \delta_f, 
\]
a regular holonomic $\cD_T$-module. The $\cD$-module action is given by
\[
\begin{aligned}
h(g \de_t^\alpha \delta_f) & = hg \de_t^\alpha \delta_f \quad \text{for all} \quad  h\in \cO_X, \\
D(g \de_t^\alpha \delta_f) &= D(g) \de_t^\alpha \delta_f - \sum_{i=1}^r D(f_i) g \de_t^{\alpha+e_i}\delta_f \quad \text{for all} \quad D\in {\rm Der}_\C(\cO_X), \\
t_i(g \de_t^\alpha \delta_f) &= f_i g \de_t^\alpha\delta_f - \alpha_i g \de_t^{\alpha-e_i}\delta_f, \\
 \de_{t_i}(g\de_t^{\alpha}\delta_f) &= g \de_t^{\alpha+e_i}\delta_f.
\end{aligned}
\]

The Hodge filtration is given by
\[ F_{p+r} \cB_f = \bigoplus_{|\alpha| \leq p} \cO_X \de_t^\alpha \delta_f,\]
where the shift by $r$ is due to the relative dimension of the graph embedding and the fact that we use left $\cD$-modules.

For any slope $L = \sum_{i=1}^r a_i s_i$, define a $\Z$-indexed filtration on $\cB_f$ by
\[ {}^L G^\bullet(\cB_f) = {}^L V^\bullet \cD_T \cdot \delta_f.\]

Define the \emph{$b_L$-function of $f_1,\dots,f_r$} to be the monic minimal polynomial of the action of $L(s) = \sum_{i=1}^r -a_i \de_{t_i} t_i$ on ${\rm Gr}_{{}^L G}^0(\cB_f)$.

\begin{lem} \label{lem-otherGr} For any $j\in \Z$, we have
\[ b_{L}(L(s)+j) {\rm Gr}_{{}^L G}^j(\cB_f) = 0.\]
\end{lem}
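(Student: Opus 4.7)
My plan is to reduce the statement to the defining property $b_L(L(s))\delta_f \in {}^L G^1\cB_f$ via a commutation argument. Since ${}^L G^j\cB_f = {}^L V^j\cD_T\cdot \delta_f$ by definition of the filtration ${}^L G^\bullet$, the claim is equivalent to
\[ b_L(L(s)+j)\cdot P\delta_f \in {}^L G^{j+1}\cB_f \fevery P \in {}^L V^j\cD_T.\]
I would establish this by moving $b_L(L(s)+j)$ to the right of $P$, so that the resulting action on $\delta_f$ is governed by the defining property of $b_L$.

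The technical heart is a commutation lemma. First I would decompose $P = \sum_{m\geq j} P_m$ into its $L$-homogeneous components, where $P_m$ is a $\cD_X$-linear combination of monomials $t^\beta\de_t^\gamma$ with $L(\beta-\gamma) = m$; such a decomposition exists because membership in ${}^L V^j\cD_T$ is a monomial-by-monomial condition. From the basic relations $[s_i,t_i]=-t_i$, $[s_i,\de_{t_i}]=\de_{t_i}$, together with the fact that $s_i$ commutes with $\cD_X$ and with $t_k,\de_{t_k}$ for $k\neq i$, a direct calculation yields
\[ L(s)\cdot P_m = P_m\cdot (L(s)-m),\]
and hence $q(L(s))P_m = P_m\cdot q(L(s)-m)$ for every polynomial $q\in \C[w]$.

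To conclude, I would apply this with $q(w) = b_L(w+j)$ to obtain $b_L(L(s)+j)P_m\delta_f = P_m \cdot b_L(L(s)+j-m)\delta_f$, and then split on $m$. If $m = j$, the right-hand side becomes $P_j\cdot b_L(L(s))\delta_f$, which by the defining property of $b_L$ and the multiplicativity of ${}^L V^\bullet\cD_T$ lies in $P_j\cdot {}^L G^1\cB_f \subseteq {}^L V^{j+1}\cD_T\cdot\delta_f = {}^L G^{j+1}\cB_f$. If $m>j$, the observation that $L(s) = -\sum_i a_i\de_{t_i}t_i$ already lies in ${}^L V^0\cD_T$ gives $b_L(L(s)+j-m)\in {}^L V^0\cD_T$, and hence $P_m\cdot b_L(L(s)+j-m)\in {}^L V^m\cdot {}^L V^0 \subseteq {}^L V^{j+1}\cD_T$; the result again sits in ${}^L G^{j+1}\cB_f$. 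Summing over $m$ gives the claim.

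No serious obstacle is anticipated; the only step worth double-checking is the commutation identity, which is an entirely routine calculation with polynomial differential operators. The whole argument is insensitive to the sign of $j$, so it handles the statement uniformly for all $j \in \Z$.
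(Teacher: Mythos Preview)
Your proof is correct and takes essentially the same approach as the paper: both arguments rest on the commutation identity $L(s)\,t^\beta\de_t^\gamma = t^\beta\de_t^\gamma\,(L(s)-L(\beta-\gamma))$ and the observation that terms with $L(\beta-\gamma)>j$ already lie in ${}^L G^{j+1}\cB_f$. The paper packages this as an equivariant surjection $\bigoplus {\rm Gr}_{{}^L G}^0(\cB_f)\twoheadrightarrow{\rm Gr}_{{}^L G}^j(\cB_f)$ intertwining $L(s)$ with $L(s)+j$, while you work element-by-element after decomposing $P$ into $L$-homogeneous pieces; the content is the same.
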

\begin{proof} As
\[ {}^L V^j \cD_T = \sum_{L(\beta) \geq L(\gamma)+j} {}^L V^0 \cD_T \cdot t^\beta \de_t^\gamma,\]
we have a surjection
\[ \bigoplus_{L(\beta) \geq L(\gamma)+j} {}^L G^0 \cB_f \xrightarrow[]{(t^\beta \de_t^\gamma)} {}^L G^j(\cB_f),\]
and if we compose with the projection to ${\rm Gr}_{{}^L G}^j(\cB_f)$, we get a surjection
\[\bigoplus_{L(\beta) = L(\gamma)+j} {}^L G^0 \cB_f \xrightarrow[]{(t^\beta \de_t^\gamma)} {\rm Gr}_{{}^L G}^j(\cB_f),\]
where we can take $=$ in the index set, as any terms with strict inequality necessarily map to $0$ by definition. Finally, for any fixed $\beta,\gamma$, note that ${}^L G^1 \cB_f$ maps to $0$ in the associated graded piece, so we have a surjection
\[\bigoplus_{L(\beta) = L(\gamma)+j} {\rm Gr}_{{}^L G}^0(\cB_f) \xrightarrow[]{\Phi} {\rm Gr}_{{}^L G}^j(\cB_f).\]

As $t^\beta \de_t^\gamma L(s) = (L(s+\beta-\gamma)) t^\beta \de_t^\gamma$, we see that $\Phi \circ L(s) = (L(s)+j)\Phi$, which proves the claim.
\end{proof}

\begin{eg} \label{eg-LBFunctionHypersurface} Let $L = a s_i$ for some $a \in \Z_{>0}$. Let $b_{f_i}(s) = \prod (s+\gamma)^{m_\gamma}$ be the usual Bernstein-Sato polynomial of the hypersurface defined by $f_i$. Then it is easy to check that
\[ b_{L,f}(s) = \prod (s+a\gamma)^{m_\gamma} = a^{\deg b_{f_i}} b_{f_i}\left(\frac{s}{a}\right).\]
\end{eg}

The $b_L$-function satisfies the following Thom-Sebastiani type property, similar to \cite{BMS}*{Thm. 5}. The proof is essentially the same, but we repeat it for convenience.

\begin{prop}\label{prop-LBfunctionTS} Let $f_1,\dots, f_r \in \cO_X(X),\, g_1,\dots, g_c \in \cO_Y(Y)$ for $X,Y$ two smooth complex algebraic varieties. Write $L_1 = \sum_{i=1}^r a_i s_i$ and $L_2 = \sum_{i=1}^c b_i s_i$ and let $L = \sum_{i=1}^r a_i s_i + \sum_{j=1}^c b_j s_{r+j}$.

Let $b_{L_1,f}(w)$ be the $b_{L_1}$-function for $f_1,\dots, f_r$ and define similarly $b_{L_2,g}(w)$ and $b_{L,(f,g)}(w)$. Write
\[ b_{L_1,f}(w) = \prod_\alpha (w+\alpha)^{m_\alpha^{(f)}}, \quad b_{L_2,g}(w) = \prod_{\beta} (w+\beta)^{m_\beta^{(g)}}.\]

Then
\[ b_{L,(f,g)}(w) = \prod (w+\gamma)^{m_\gamma},\]
where $m_\gamma = \max \{ m^{(f)}_\alpha + m^{(g)}_\beta -1 \mid m_\alpha^{(f)},m_\beta^{(g)}>0, \alpha+\beta = \gamma\}$.
\end{prop}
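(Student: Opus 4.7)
The argument parallels \cite{BMS}*{Thm. 5} and reduces the claim to a linear algebra computation about tensor products of Jordan blocks. The starting point is the identification $\cB_{(f,g)} = \cB_f \boxtimes \cB_g$ with $\delta_{(f,g)} = \delta_f \boxtimes \delta_g$, under which $L(s) = L_1(s) + L_2(s')$ splits as a sum of commuting operators acting on disjoint sets of variables. Write $p_1(w) = b_{L_1,f}(w)$, $p_2(w) = b_{L_2,g}(w)$, and $P(w) = \prod_\gamma(w+\gamma)^{m_\gamma}$ for the polynomial in the statement. Since ${}^{L_i}V^1\cD \subseteq {}^L V^1\cD_T$, the defining relations for the one-sided $b_{L_i}$-functions give
\[
 p_1(L_1(s))\,\delta_{(f,g)},\ p_2(L_2(s'))\,\delta_{(f,g)}\ \in\ {}^L V^1\cD_T\cdot \delta_{(f,g)}.
\]
The commutation relations $[L(s), t^\beta\de_t^\gamma] = (L(\gamma)-L(\beta))\,t^\beta\de_t^\gamma$ show that $L(s)$ is central in ${\rm Gr}^0({}^L V\cD_T)$; since $[\delta_{(f,g)}]$ generates ${\rm Gr}_{{}^L G}^0(\cB_{(f,g)})$ as a module over this graded ring, $b_{L,(f,g)}$ equals the annihilator of $[\delta_{(f,g)}]$ in $\C[L(s)]$.

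\emph{Upper bound $b_{L,(f,g)} \mid P$.} In the commutative ring $A = \C[w_1,w_2]/(p_1(w_1), p_2(w_2))$, the Chinese Remainder Theorem decomposes $A$ into factors $\C[w_1,w_2]/\bigl((w_1+\alpha)^{m_\alpha^{(f)}},(w_2+\beta)^{m_\beta^{(g)}}\bigr)$; on each factor, $w_1+\alpha$ and $w_2+\beta$ are commuting nilpotents of indices $m_\alpha^{(f)}$ and $m_\beta^{(g)}$, so by binomial expansion $w_1+w_2+\gamma$ (with $\gamma=\alpha+\beta$) is nilpotent of index exactly $m_\alpha^{(f)}+m_\beta^{(g)}-1$. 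Taking maxima over decompositions of each $\gamma$ shows that the minimal polynomial of $w_1+w_2$ on $A$ is exactly $P(w)$, whence $P(w_1+w_2) \in (p_1(w_1), p_2(w_2))$ as an ideal of $\C[w_1,w_2]$. Applying this identity of commuting operators to $\delta_{(f,g)}$ yields $P(L(s))\,\delta_{(f,g)} \in {}^L V^1\cD_T\cdot \delta_{(f,g)}$, hence $b_{L,(f,g)} \mid P$.

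\emph{Lower bound $P \mid b_{L,(f,g)}$.} For the reverse divisibility, I construct a canonical $\C[L_1(s), L_2(s')]$-linear injection
\[
 \iota \colon {\rm Gr}_{{}^{L_1}G}^0(\cB_f)\otimes_\C{\rm Gr}_{{}^{L_2}G}^0(\cB_g)\ \hookrightarrow\ {\rm Gr}_{{}^L G}^0(\cB_{(f,g)})
\]
sending $[\delta_f]\otimes[\delta_g]$ to $[\delta_{(f,g)}]$. The cyclic $\C[L_1,L_2]$-submodule of the source generated by $[\delta_f]\otimes[\delta_g]$ is isomorphic to $\C[L_1]/p_1 \otimes_\C \C[L_2]/p_2 \cong A$, because the annihilator of $[\delta_f]$ in $\C[L_1]$ is exactly $(p_1)$ by definition of $b_{L_1,f}$ together with the centrality observation above, and similarly for $\delta_g$. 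Hence its image under $\iota$ is a submodule of ${\rm Gr}_{{}^L G}^0(\cB_{(f,g)})$ isomorphic to $A$, and the Jordan block computation of the previous step shows that $L = L_1+L_2$ has minimal polynomial exactly $P$ on this submodule. This forces $P$ to divide the minimal polynomial of $L(s)$ on all of ${\rm Gr}_{{}^L G}^0(\cB_{(f,g)})$, that is, $P \mid b_{L,(f,g)}$.

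\emph{Main obstacle.} The principal technical input is the injectivity of $\iota$. The identity ${}^L G^j\cB_{(f,g)} = \sum_{k+l\geq j} {}^{L_1}G^k(\cB_f)\otimes {}^{L_2}G^l(\cB_g)$ only furnishes a surjection $\bigoplus_{k+l=0} {\rm Gr}^k\cB_f\otimes{\rm Gr}^l\cB_g \twoheadrightarrow {\rm Gr}_{{}^L G}^0(\cB_{(f,g)})$ in general, and an associated graded of a tensor product need not split as the tensor of the associated gradeds. I plan to extract the required injectivity from the Rees-module deformation of \lemmaref{lem-LArtinRees}: $R_{{}^{L_1}G}(\cB_f)$ and $R_{{}^{L_2}G}(\cB_g)$ are relative $\cD$-modules on the deformations $\widetilde{T_1}^{L_1}$ and $\widetilde{T_2}^{L_2}$, and their external product, restricted to the diagonal $u_1=u_2=u$ and specialized at $u=0$, recovers ${\rm Gr}_{{}^L G}(\cB_{(f,g)})$ as a graded module over ${\rm Gr}_{{}^L V}(\cD_T)$. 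The resulting splitting ${\rm Gr}_{{}^L G}^j(\cB_{(f,g)}) \cong \bigoplus_{k+l=j} {\rm Gr}_{{}^{L_1}G}^k(\cB_f)\otimes_\C{\rm Gr}_{{}^{L_2}G}^l(\cB_g)$ delivers the injectivity of $\iota$.
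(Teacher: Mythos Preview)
Your proof is correct and follows the same route as the paper: identify ${}^L G^\bullet\cB_{(f,g)}$ with the convolution of ${}^{L_1}G^\bullet\cB_f$ and ${}^{L_2}G^\bullet\cB_g$, then use the Jordan-block/binomial computation on the $(0,0)$ graded piece.

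The one place you diverge is unnecessary. Your ``main obstacle'' --- the splitting
\[
{\rm Gr}_{{}^L G}^j(\cB_{(f,g)})\ \cong\ \bigoplus_{k+l=j} {\rm Gr}_{{}^{L_1}G}^k(\cB_f)\otimes_\C{\rm Gr}_{{}^{L_2}G}^l(\cB_g)
\]
--- is automatic, because the external product $\cB_f\boxtimes\cB_g$ is locally a tensor product of filtered $\C$-vector spaces. For any filtered $\C$-vector spaces $(A,F^\bullet)$ and $(B,G^\bullet)$, choosing $\C$-linear splittings $A\cong\bigoplus_i{\rm Gr}_F^iA$ and $B\cong\bigoplus_j{\rm Gr}_G^jB$ immediately gives ${\rm Gr}^n(A\otimes_\C B)\cong\bigoplus_{i+j=n}{\rm Gr}_F^iA\otimes_\C{\rm Gr}_G^jB$ for the convolution filtration. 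This is exactly what the paper invokes by citing \cite{BMS}*{Thm.~5}. So your Rees-module/deformation argument is not needed (and as written is somewhat imprecise about how restricting to the diagonal and specializing at $u=0$ interacts with the tensor product).

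A minor organizational difference: your upper bound, via the polynomial identity $P(w_1+w_2)\in(p_1(w_1),p_2(w_2))$ applied directly to $\delta_{(f,g)}$, avoids the splitting entirely and is slightly cleaner than the paper's version, which instead uses \lemmaref{lem-otherGr} on every summand ${\rm Gr}^i\boxtimes{\rm Gr}^j$ with $i+j=0$. Both arguments are equivalent once one notes that $L(s)$ is central in ${\rm Gr}^0({}^LV\cD_T)$ and that $[\delta_{(f,g)}]$ generates.
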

\begin{proof} Let $i_{f,+}(\cO_X) = \cB_f, i_{g,+}(\cO_Y) = \cB_g$ and $i_{(f,g),+}(\cO_{X\times Y}) = \cB_{(f,g)}$. Then, as in the proof of \cite{BMS}*{Thm. 5} we have an isomorphism
\[ \cB_{(f,g)} \cong \cB_f \boxtimes \cB_g.\]

Moreover, the ${}^L G^\bullet$-filtration on the left is given by the convolution of the filtrations ${}^{L_1} G^\bullet \cB_f$ and ${}^{L_2} G^\bullet \cB_g$. In other words,
\[ {}^L G^k \cB_{(f,g)} = \sum_{i+j =k} {}^{L_1} G^i \cB_f \boxtimes {}^{L_2} G^j \cB_g.\]

As in the proof of \cite{BMS}*{Thm. 5}, we have
\[ {\rm Gr}_{{}^L G}^k \cB_{(f,g)} \cong \bigoplus_{i+j =k} {\rm Gr}_{{}^{L_1} G}^i \cB_f \boxtimes {\rm Gr}_{{}^{L_2} G}^j \cB_g.\]

Let $b'(w) = \prod (w+\gamma)^{m_\gamma}$ as defined in the proposition statement. 

By \lemmaref{lem-otherGr}, we see that $b'(L(s)+i+j)$ annihilates ${\rm Gr}_{{}^{L_1} G}^i(\cB_f) \boxtimes {\rm Gr}_{{}^{L_2} G}^j(\cB_g)$ for any $i,j$. We see then that $b_{L,(f,g)}(w) \mid b'(w)$.

On the other hand, by the binomial theorem we see that $b'(w)$ is the minimal polynomial of the action of $L(s)$ on ${\rm Gr}_{{}^{L_1} G}^0(\cB_f) \boxtimes {\rm Gr}_{{}^{L_2} G}^0(\cB_g)$. Thus, as $b_{L,(f,g)}(L(s))$ annihilates this term, we get the other divisibility.
\end{proof}

Next, we review the definitions of higher Du Bois and higher rational singularities.

Given any complex algebraic variety $Z$ of pure dimension $d_Z$ and any $0\leq p \leq d_Z$, we have the $p$th Du Bois complex $\underline{\Omega}_Z^p \in D^b_{\rm coh}(\cO_Z)$, with comparison morphisms $\alpha_p \colon \Omega_Z^p \to \underline{\Omega}_Z^p$, where $\Omega_Z^p$ is the sheaf of $p$-th K\"{a}hler differentials on $Z$. These morphisms are quasi-isomorphisms when $Z$ is smooth.

If $Z$ has local complete intersection singularities, then following \cites{JKSY,MOPW,FL}, we say $Z$ has \emph{$k$-Du Bois singularities} if $\alpha_p$ is a quasi-isomorphism for all $p \leq k$.

Using a resolution of singularities, one can define a morphism $\underline{\Omega}_Z^p \xrightarrow[]{\gamma_p} \mathbf D_Z(\underline{\Omega}_Z^{d_Z - p})$ in $D^b_{\rm coh}(\cO_Z)$. Here $\mathbf D_Z(-)$ is the shifted Grothendieck duality functor $R \cH om(-,\omega_Z^\bullet)[-d_Z]$, where $\omega_Z^\bullet$ is the dualizing complex of $Z$. If $Z$ is smooth, this map is the natural isomorphism $\Omega_Z^p \cong \cH om_{\cO_Z}(\Omega_Z^{d_Z -p},\omega_Z)$.

Assuming still that $Z$ has local complete intersection singularities, we say that $Z$ has $k$-rational singularities if the composition $\Omega_Z^p \to \underline{\Omega}_Z^p \to \mathbf D_Z(\underline{\Omega}_Z^{d_Z -p})$ is a quasi-isomorphism for all $p\leq k$. When $Z$ is smooth, this is the usual isomorphism $\Omega_Z^p \cong \cH om_{\cO_Z}(\Omega_Z^{d_Z-p},\omega_Z)$ of locally free sheaves.

Now, let $Z = V(f_1,\dots, f_r) \subseteq X$ be a local complete intersection subvariety of a smooth variety $X$. Associated to this, we have the \emph{local cohomology mixed Hodge module}
\[ \cH^r_Z(\Q_X^H[\dim X]),\]
with underlying bi-filtered $\cD_X$-module denoted $(\cH^r_Z(\cO_X),F,W)$. 

The standard description of the local cohomology module is as follows: let $\cO_X[\frac{f_i}{f_1\dots f_r}]$ be the localization of $\cO_X$ at all $f_j$ with $j\neq i$ and let $\cO_X[\frac{1}{f_1\dots f_r}]$ be the localization at all $f_i$. These modules naturally underlie mixed Hodge modules on $X$, and we have
\[ \cH^r_Z(\cO_X) = {\rm coker}\left( \bigoplus_{i=1}^r \cO_X\left[\frac{f_i}{f_1\dots f_r}\right] \to \cO_X\left[\frac{1}{f_1\dots f_r}\right]\right).\]

This carries the \emph{pole-order filtration}, defined by
\[ P_k \cH^r_Z(\cO_X) = \{m \in \cH^r_Z(\cO_X) \mid (f_1,\dots, f_r)^{k+1} m = 0\}.\]

It is not hard to see that $F_k \cH^r_Z(\cO_X) \subseteq P_k \cH^r_Z(\cO_X)$, see \cite{MustataPopaDuBois}*{Prop. 7.1}.

Our starting point is the following:
\begin{thm}[\cites{CDMO,CDM1,MustataPopaDuBois}] Let $(\cH^r_Z(\cO_X),F,W)$ be the local cohomology bi-filtered $\cD$-module. Then
\[\widetilde{\alpha}(Z) \geq r+k \iff F_k \cH^r_Z(\cO_X) = P_k \cH^r_Z(\cO_X) \iff Z \text{ has } k\text{-Du Bois singularities},\]
\[ \widetilde{\alpha}(Z) > r+k \iff F_kW_{n+r} \cH^r_Z(\cO_X) = P_k \cH^r_Z(\cO_X) \iff Z \text{ has } k\text{-rational singularities}.\]
\end{thm}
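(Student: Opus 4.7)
The plan is to split the statement into two independent assertions: a $V$-filtration reformulation of the Hodge-filtration equalities $F_k\cH^r_Z(\cO_X) = P_k\cH^r_Z(\cO_X)$ and $F_kW_{n+r}\cH^r_Z(\cO_X) = P_k\cH^r_Z(\cO_X)$; and the identification of these Hodge-theoretic conditions with $k$-Du Bois and $k$-rational singularities, which was proved for hypersurfaces in \cite{MustataPopaDuBois} and extended to complete intersections in \cites{CDMO,CDM1}.

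For the first assertion, I would start from the identification $\cH^r_Z(\cO_X) \cong \cH^r\sigma^!\cB_f = \cB_f/\sum_i t_i\cB_f$, together with the $\cD$-module formulas $\de_{t_i}(g\de_t^\alpha\delta_f) = g\de_t^{\alpha+e_i}\delta_f$ and the \v{C}ech correspondence $\de_t^\alpha\delta_f \leftrightarrow \alpha!/(f_1^{\alpha_1+1}\cdots f_r^{\alpha_r+1})$. Since a class $\de_t^\alpha\delta_f/\alpha!$ has pole order $|\alpha|+r$, this shows directly that $P_k\cH^r_Z(\cO_X)$ is the image of $F_{k+r}\cB_f$ in the quotient. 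On the other hand, applying \theoremref{thm-CDResults} to $\cB_f$, the strict filtered quasi-isomorphism $\sigma^!(\cB_f,F) \cong B^0(\cB_f,F)$ identifies $F_k\cH^r_Z(\cO_X)$ with the image of $F_{k+r}V^r\cB_f$ in the same quotient. The equivalence $F_k = P_k \iff F_{k+r}\cB_f \subseteq V^r\cB_f$ is then the $V$-filtration reformulation proved in \cite{CDMO}, and this containment is precisely the definition of $\widetilde{\alpha}(Z) \geq r+k$. The weight refinement follows analogously: the weight-filtration statement of \theoremref{thm-CDResults} identifies $W_{n+r}\cH^r_Z(\cO_X)$ with the image of $V^{>r}\cB_f$, so that $F_kW_{n+r} = P_k$ translates into the strict version $F_{k+r}\cB_f \subseteq V^{>r}\cB_f$, i.e., $\widetilde{\alpha}(Z) > r+k$.

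The second assertion -- the equivalence with $k$-Du Bois (resp.\ $k$-rational) singularities -- is the main obstacle and occupies the bulk of \cites{MustataPopaDuBois,CDMO,CDM1}. I would follow the strategy there: take a strong log resolution $\mu\colon \widetilde{X} \to X$ of $(X,Z)$ factoring $Z$ through a simple normal crossing divisor $E \subseteq \widetilde{X}$, and use Saito's direct image formula to express ${\rm Gr}^F_\bullet \cH^r_Z(\cO_X)$ in terms of $R\mu_*\Omega_{\widetilde{X}}^\bullet(\log E)$. This should be compared against the parallel description of $\underline{\Omega}_Z^p$ arising from the same resolution; the upshot is that $F_k\cH^r_Z(\cO_X) = P_k\cH^r_Z(\cO_X)$ becomes the statement that $\alpha_p\colon \Omega_Z^p \to \underline{\Omega}_Z^p$ is a quasi-isomorphism for every $p \leq k$. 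For the $k$-rational case, one dualizes this comparison via Grothendieck duality, exploiting that $W_{n+r}\cH^r_Z(\cO_X)$ isolates the pure intersection-cohomology summand of the local cohomology module. The delicate step throughout is reconciling the Hodge-theoretic side (controlled by the $V$-filtration on $\cB_f$) with the Du Bois side (controlled by the log resolution), which is precisely where Saito's theory of Hodge modules does the heavy lifting.
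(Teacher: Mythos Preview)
The paper does not actually prove this theorem. It is stated as a citation of prior work---the sentence immediately preceding it is ``Our starting point is the following:'' and the theorem carries the attribution \cites{CDMO,CDM1,MustataPopaDuBois} in its header. No proof is given; the paper moves directly on to \theoremref{thm-BifilteredIso}. So there is no ``paper's own proof'' to compare your proposal against.

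That said, your outline is a reasonable summary of how the cited papers proceed, with one caveat. Your identification of $W_{n+r}\cH^r_Z(\cO_X)$ with ``the image of $V^{>r}\cB_f$'' is too quick: the weight filtration on $\cH^r B^0(\cB_f)$ is induced by the relative monodromy filtration for the nilpotent operator $\theta$ on ${\rm Gr}_V^r(\cB_f)$, not directly by $V^{>r}$. What one actually shows (and what the present paper does in its proof of \theoremref{thm-BifilteredIso}, for general $L$) is that $W_{n+r}\cH^r_Z(\cO_X)$ is the image of $\ker(\theta \colon {\rm Gr}_V^r(\cB_f) \to {\rm Gr}_V^r(\cB_f))$; the condition $F_kW_{n+r} = P_k$ then becomes $\theta \cdot F_{k+r}\cB_f \subseteq V^{>r}\cB_f$, which is the correct $V$-filtration reformulation of $\widetilde{\alpha}(Z) > r+k$. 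Your version $F_{k+r}\cB_f \subseteq V^{>r}\cB_f$ would instead correspond to $\widetilde{\alpha}(Z) \geq r+k+1$, i.e., $(k+1)$-Du Bois, which is strictly stronger than $k$-rational.
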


In other words, the structure of the local cohomology mixed Hodge module allows us to give lower bounds on the minimal exponent of a local complete intersection, which controls these classes of singularities.

By \theoremref{thm-main}, we have the following:
\begin{cor} \label{cor-BifilteredIso} Let $(\cH^r_Z(\cO_X),F,W)$ be the local cohomology bi-filtered $\cD$-module. Then for any non-degenerate slope $L$, we have
\[ F_p \cH^r_Z(\cO_X) = \left\{\sum_{|\alpha| \leq p} \frac{\alpha! h_\alpha}{f_1^{\alpha_1+1}\dots f_r^{\alpha_r+1}} \mid \sum_{|\alpha| \leq p} h_\alpha \de_t^\alpha \delta_f \in {}^L V^{|L|}\cB_f\right\}\]
and for any $\ell \in \Z_{\geq 0}$, we have
\[ W_{n+r+\ell}\cH^r_Z(\cO_X) = \left\{\sum_{\alpha} \frac{\alpha! h_\alpha}{f_1^{\alpha_1+1}\dots f_r^{\alpha_r+1}} \mid L(t\de_t)^{\ell+1}\left(\sum_{\alpha} h_\alpha \de_t^\alpha \delta_f\right) \in {}^L V^{>|L|}\cB_f\right\}.\]
\end{cor}

To see this, we must make explicit the isomorphism
\[ {}^L V^{|L|}\cB_f \bigg{/} \sum_{i=1}^r t_i {}^L V^{|L|-a_i}\cB_f = \cH^r A^0(\cB_f) = \cH^r\sigma^! \cB_f \cong \cH^r_Z(\cO_X).\]

Note that, by \cite{CDMO}*{Lem. 5.1}, if we give any endomorphism between these $\cD$-modules, then any other one differs from this by a non-zero scalar multiple. So in terms of the Hodge and weight filtrations, any $\cD$-module isomorphism we define will (up to scalar multiple) agree with the formal bi-filtered isomorphism given by \theoremref{thm-main}. We denote that isomorphism by $\rho \colon {}^L V^{|L|}\cB_f \big{/} \sum_{i=1}^r t_i {}^L V^{|L|-a_i}\cB_f \to \cH^r_Z(\cO_X)$.

In \emph{loc. cit.}, a $\cD_X$-linear morphism $\tau \colon \cB_f \to \cH^r_Z(\cO_X)$ is defined as
\[ \sum_\beta h_\beta \de_t^\beta \delta_f \mapsto \sum_\beta \frac{\beta! h_\beta}{f_1^{\beta_1+1}\dots f_r^{\beta_r+1}}\]
and it is observed that $\sum_{i=1}^r t_i \cB_f \subseteq \ker(\tau)$. In particular, $\sum_{i=1}^r (s_i+1) \cB_f \subseteq \ker(\tau)$.

We define $\tau\colon {}^L V^{|L|}\cB_f \to \cH^r_Z(\cO_X)$ by applying $\tau$ to ${}^L V^{|L|}\cB_f \subseteq \cB_f$, which vanishes on $\sum_{i=1}^r t_i {}^L V^{|L|-a_i}\cB_f \subseteq \sum_{i=1}^r t_i \cB_f$. Thus, we get an induced morphism
\[ \overline{\tau} \colon {}^L V^{|L|}\cB_f \bigg{/} \sum_{i=1}^r t_i {}^L V^{|L|-a_i}\cB_f \to \cH^r_Z(\cO_X).\]

Using the same argument as that in \emph{loc. cit.}, we have the following:

\begin{lem} The map $\overline{\tau}$ is surjective.
\end{lem}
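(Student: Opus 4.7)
The plan is to reduce the surjectivity of $\overline{\tau}$ to the single containment $\delta_f \in {}^L V^{|L|}\cB_f$. Granting this, a direct check using the $\cD_T$-action formulas recalled at the start of this section shows that $\tau$ is $\cD_X$-linear. Since $\cD_X \subseteq {}^L V^0 \cD_T$, the $\cD_X$-action preserves ${}^L V^{|L|}\cB_f$, so $\tau({}^L V^{|L|}\cB_f)$ is a $\cD_X$-submodule of $\cH^r_Z(\cO_X)$. It contains $\tau(\delta_f) = \tfrac{1}{f_1 \cdots f_r}$, which generates $\cH^r_Z(\cO_X)$ as a $\cD_X$-module (a standard consequence of the \v{C}ech presentation together with the Bernstein--Sato iteration for the product $f_1 \cdots f_r$). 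Hence $\tau|_{{}^L V^{|L|}\cB_f}$ is already surjective, and the surjectivity descends to $\overline{\tau}$ since $\sum_i t_i {}^L V^{|L|-a_i}\cB_f \subseteq \ker \tau$.

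The main obstacle is therefore verifying $\delta_f \in {}^L V^{|L|}\cB_f$. I would exploit the good $(\cD_T, {}^L V)$-filtration ${}^L G^\bullet \cB_f = {}^L V^\bullet \cD_T \cdot \delta_f$ introduced at the start of this section, which is generated in degree $0$ by $\delta_f$ and whose $b$-function is the $b_L$-function $b_L(w)$ of $f_1, \dots, f_r$ by \lemmaref{lem-otherGr}. Refining this $\Z$-indexed filtration into its $\Q$-indexed form via generalized eigenspaces of $L(s)$ as in Remark~\ref{rmk-refineQIndexed}, and appealing to the uniqueness in \propositionref{prop-orderFilt}, the problem reduces to the following bound: every root $-\gamma$ of $b_L(w)$ satisfies $\gamma \geq |L|$, equivalently the generalized eigenvalues of $L(s)+|L|$ on ${\rm Gr}_{{}^L G}^0(\cB_f)$ are all non-negative.

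This last bound is a Bernstein--Sato-type estimate for a regular sequence. When $L = \sum s_i$ it is the classical inequality $\widetilde{\alpha}(Z) \geq r$, valid for any complete intersection of pure codimension $r$; see~\cite{BMS}. The extension to arbitrary non-degenerate $L$ can be obtained by passage to the deformation space $\widetilde{T}^L$: by \propositionref{prop-VFiltLSpec}, the containment $\delta_f \in {}^L V^{|L|}\cB_f$ is equivalent to $\delta_f u^{-1} \in V^0 \tcB_{f,L}$ for the usual hypersurface $V$-filtration along $\{u = 0\} \subseteq \widetilde{T}^L$, and this hypersurface-level estimate follows from the classical case once one identifies the fiber $\tcB_{f,L}|_{u=0}$ with ${\rm Sp}_L(\cB_f)$ and traces the generator $\delta_f$ through the specialization. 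The hard part is precisely this root bound, as the rest of the argument is essentially formal once $\delta_f$ is located in the right piece of the filtration.
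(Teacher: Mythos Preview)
Your reduction to the containment $\delta_f \in {}^L V^{|L|}\cB_f$ is clean, but this containment is simply false in general, so the argument cannot be completed along the lines you suggest. Already in the hypersurface case ($r=1$, $L=s_1$, $|L|=1$) one has $\delta_f \in V^{\mathrm{lct}(f)}\cB_f \setminus V^{>\mathrm{lct}(f)}\cB_f$, since the largest root of the Bernstein--Sato polynomial $b_f(s)$ is $-\mathrm{lct}(f)$. Thus $\delta_f \in V^{1}\cB_f$ holds if and only if $\mathrm{lct}(f)\geq 1$, i.e., if and only if $V(f)$ is log canonical. A reduced plane curve such as $f = x_1^4 + x_2^4$ has $\mathrm{lct}(f)=\tfrac{1}{2}$, so $\delta_f \notin V^{1}\cB_f$ and your key step fails. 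In the paper's own language, the containment $\delta_f \in {}^L V^{|L|}\cB_f$ is exactly the condition that $Z$ be $0$-Du Bois (see the corollary following \theoremref{thm-BifilteredIso}), and no such hypothesis is in force here. Your appeal to \cite{BMS} for the bound ``$\widetilde{\alpha}(Z)\geq r$ for any complete intersection'' is therefore mistaken; that inequality characterizes Du Bois singularities and is not automatic.

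The paper sidesteps this obstruction entirely: rather than trying to place $\delta_f$ itself in ${}^L V^{|L|}\cB_f$, it shows that any $v\in \cB_f$ can be corrected \emph{modulo $\ker(\tau)$} into ${}^L V^{|L|}\cB_f$. One finds $\alpha_1,\dots,\alpha_N<|L|$ with $\prod_j(L(s)+\alpha_j)\,v \in {}^L V^{|L|}\cB_f$, writes a B\'ezout identity $(w+|L|)p(w)+q(w)\prod_j(w+\alpha_j)=1$, and evaluates at $w=L(s)$. The term $(L(s)+|L|)p(L(s))v$ lies in $\sum_i (s_i+1)\cB_f \subseteq \ker(\tau)$ because $L(s)+|L|=\sum_i a_i(s_i+1)$, while the other term already sits in ${}^L V^{|L|}\cB_f$. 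The point is that one only needs $\tau$ to kill $(L(s)+|L|)\cB_f$, not that $\delta_f$ have high $L$-order.
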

\begin{proof} Let $u = \frac{g}{(f_1\dots f_r)^m} \in \cO_X[\frac{1}{f_1\dots f_r}]$ with $m\geq 1$, by definition, we have $u = \tau(v)$ where $v = \frac{g}{(m-1)!^r}(\de_{t_1}\dots \de_{t_r})^{m-1} \delta_f \in \cB_f$. However, $v$ needs not lie in ${}^L V^{|L|}\cB_f$. If it does, we are done.

Otherwise, by discreteness of the ${}^L V$-filtration and nilpotency of $L(s)+\lambda$ on ${\rm Gr}_L^{\lambda}(\cB_f)$, we can find $\alpha_1 \leq \dots \leq \alpha_N < |L|$ such that
\[ (L(s)+\alpha_1)\dots (L(s)+\alpha_N) v \in {}^L V^{|L|}\cB_f.\]

By B\'{e}zout's relation, we have some $p(w), q(w)$ such that
\[ (w+|L|)p(w) + q(w)\prod_{i=1}^N (w+\alpha_i) = 1 \in \C[w].\]

Plugging in $L(s)$ and applying to $v$, we get
\[ (L(s)+|L|)p(L(s))v + q(L(s))\prod_{i=1}^N (L(s)+\alpha_i) v = v.\]

Note that we have $(L(s)+ |L|) = \sum_{i=1}^r a_i (s_i +1)$.

As $(L(s)+ |L|) p(L(s)) v = \sum_{i=1}^r (s_i +1) a_i p(L(s)) v \in \sum_{i=1}^r (s_i+1) \cB_f \subseteq \ker(\tau)$, we conclude.
\end{proof}

\begin{proof}[Proof of \theoremref{thm-LVDescribeLocalCohomology}] We have shown above that $\overline{\tau} \circ \rho^{-1} \in {\rm End}(\cH^r_Z(\cO_X))$ is surjective, hence non-zero. It must then be multiplication by a non-zero scalar, and so we get
\[ \overline{\tau} = \lambda \rho \text{ for some } \lambda \in \C^*,\]
which shows that $\overline{\tau}$ is a bi-filtered isomorphism, as desired.

The description of the Hodge filtration is an easy computation (keeping in mind the shift by $r$ in the Hodge filtration in \theoremref{thm-main}). For the weight filtration, as $\cB_f$ is pure of weight $n$, we are interested in the monodromy filtration (shifted by $n= \dim X$) on ${\rm Gr}_L^{|L|}(\cB_f)$ with respect to $L(t\de_t)$. 

By \cite{SteenbrinkZucker}*{Rmk. 2.3}, we can write this filtration as
\[ W_{n+\ell} {\rm Gr}_L^{|L|}(\cB_f) = \sum_{j \geq \max\{0,-\ell\}} L(t\de_t)^{j} \ker(L(t\de_t)^{\ell+1+2j}).\]

Note that if $j > 0$, then 
\[L(t\de_t)^{j} \ker(L(t\de_t)^{\ell+1+2i})\subseteq {\rm Im}(L(t\de_t)) \subseteq \sum_{i=1}^r t_i \de_{t_i} {\rm Gr}_V^{|L|}(\cB_f) \subseteq \sum_{i=1}^r t_i {\rm Gr}_L^{|L|-a_i}(\cB_f).\] 

Thus,
\[ W_{n+\ell+r} \cH^r_Z(\cO_X) = \frac{\ker(L(t\de_t)^{\ell+1}) + \sum_{i=1}^r t_i {\rm Gr}_L^{|L|-a_i}(\cB_f)}{\sum_{i=1}^r t_i {\rm Gr}_L^{|L|-a_i}(\cB_f)},\]
which finishes the proof. Note that the shift by $r$ comes from the fact that we are studying the $r$th cohomology in  \theoremref{thm-main}.
\end{proof}

This immediately gives the following:
\begin{cor} For $Z = V(f_1,\dots, f_r) \subseteq X$ a complete intersection of pure codimension $r$ and any non-degenerate slope $L$, we have
\[\widetilde{\alpha}(Z) \geq r+k \iff \de_t^\beta \delta_f \in {}^L V^{|L|}\cB_f \quad \forall\, |\beta| \leq k \iff Z \text{ has } k\text{-Du Bois singularities},\]
\[ \widetilde{\alpha}(Z) > r+k \iff L(t\de_t)\de_t^\beta \delta_f \in {}^L V^{>|L|}\cB_f\quad  \forall \, |\beta| \leq k \iff Z \text{ has } k\text{-rational singularities}.\]
\end{cor}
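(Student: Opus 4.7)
The corollary will follow by combining three ingredients: \theoremref{thm-BifilteredIso}, which describes $F_\bullet$ and $W_\bullet$ on $\cH^r_Z(\cO_X)$ via the ${}^L V$-filtration on $\cB_f$; the cited chain of equivalences $\widetilde{\alpha}(Z)\geq r+k \iff F_k\cH^r_Z(\cO_X) = P_k\cH^r_Z(\cO_X) \iff Z$ is $k$-Du Bois (and the analogous statement with $F_kW_{n+r}$ for $k$-rational) from \cites{CDMO,CDM1,MustataPopaDuBois}; and the identification $P_k\cH^r_Z(\cO_X) = \tau(F_{k+r}\cB_f)$, where $\tau\colon \cB_f\twoheadrightarrow \cH^r_Z(\cO_X)$ is the canonical surjection realizing $\cH^r_Z(\cO_X) \cong \cB_f/\sum_i t_i\cB_f$ (obtained from $\cH^r\sigma^!\cB_f \cong \sigma^*\cB_f$ for a regular embedding, and verified directly by checking that $(f_1,\dots,f_r)^{k+1}$ annihilates $\tau(h\de_t^\alpha\delta_f)$ whenever $|\alpha|\leq k$). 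Together these reduce the corollary to showing that $F_k\cH^r_Z(\cO_X) = P_k\cH^r_Z(\cO_X)$ is equivalent to $\de_t^\beta\delta_f\in{}^L V^{|L|}\cB_f$ for all $|\beta|\leq k$, and its rational analog.

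The forward direction is immediate: if every $\de_t^\beta\delta_f\in{}^L V^{|L|}\cB_f$ with $|\beta|\leq k$, then by $\cO_X$-stability of ${}^L V^{|L|}\cB_f$ one has $F_{k+r}\cB_f = \bigoplus_{|\alpha|\leq k}\cO_X\de_t^\alpha\delta_f \subseteq {}^L V^{|L|}\cB_f$, so the $F_k$- and $P_k$-descriptions from \theoremref{thm-BifilteredIso} coincide. For the converse, I would argue by induction on $|\beta|$. The base case $|\beta|=0$ is clean: applying the $F_0$-description to $\tau(\delta_f)\in P_0 = F_0$ produces $h\in\cO_X$ with $h\delta_f\in {}^L V^{|L|}\cB_f$ and $\tau(h\delta_f) = \tau(\delta_f)$, and the $\cO_X$-module structure of the cyclic submodule generated by $1/\prod f_i$ in $\cH^r_Z(\cO_X)$ forces $h\equiv 1 \pmod{(f_1,\dots,f_r)}$, so $\delta_f - h\delta_f\in (f_1,\dots,f_r)\cB_f$, whence $\delta_f\in{}^L V^{|L|}\cB_f$. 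For the inductive step, given $\de_t^{\beta'}\delta_f \in {}^L V^{|L|}\cB_f$ for all $|\beta'|<|\beta|$ (valid because $k$-Du Bois implies $k'$-Du Bois for $k'<k$), the $F_k$-description applied to $\tau(\de_t^\beta\delta_f)$ yields a representative $\sum h_\alpha\de_t^\alpha\delta_f \in {}^L V^{|L|}\cB_f$; after subtracting the $|\alpha|<|\beta|$ terms (already in ${}^L V^{|L|}\cB_f$ by induction), one is left with a top-degree equation from which the strictness of the Hodge filtration on ${}^L V^{|L|}\cB_f$ (Remark \ref{rmk-goodnessOfFOnV0}) together with the top-degree uniqueness in $F_{k+r}\cB_f/F_{k-1+r}\cB_f \cong \bigoplus_{|\alpha|=k}\cO_X\de_t^\alpha\delta_f$ forces $\de_t^\beta\delta_f\in{}^L V^{|L|}\cB_f$.

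The $k$-rational case follows the identical outline, replacing $F_k$ by $F_kW_{n+r}$ throughout and invoking the $W_\bullet$-description from \theoremref{thm-BifilteredIso}, which imposes the additional constraint $L(t\de_t)u\in{}^L V^{>|L|}\cB_f$. Because $L(t\de_t)$ commutes with multiplication by $\cO_X$, the same $\cO_X$-linearity argument and the same induction translate $F_kW_{n+r} = P_k$ into $L(t\de_t)\de_t^\beta\delta_f\in{}^L V^{>|L|}\cB_f$ for all $|\beta|\leq k$. The main obstacle I anticipate is the converse direction, specifically controlling $F_{k+r}\cB_f\cap\sum_i t_i\cB_f$: while $t_i$ preserves each Hodge level on a single element, Koszul-type cancellations of the form $t_i(f_j m)-t_j(f_i m)\in F_{<}$ allow elements of arbitrarily high Hodge degree to combine into low-degree elements of $\sum_i t_i\cB_f$. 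Resolving this relies on the strictness built into \theoremref{thm-main}, and in particular the Rees-coherence of ${}^L V^{|L|}\cB_f$ over $R_F({}^L V^0\cD_T)$ from Remark \ref{rmk-goodnessOfFOnV0}, which combined with the inductive hypothesis pins down $\de_t^\beta\delta_f$ modulo lower-degree elements known to lie in ${}^L V^{|L|}\cB_f$.
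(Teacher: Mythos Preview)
Your strategy is sound and mirrors what the paper has in mind when it writes ``this immediately gives'' the corollary: combine \theoremref{thm-BifilteredIso} with the cited characterizations of $k$-Du Bois and $k$-rational singularities, together with $P_k\cH^r_Z(\cO_X)=\tau(F_{k+r}\cB_f)$. The forward direction is exactly as you say.

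The gap is in your converse. The tools you invoke for the inductive step---Remark~\ref{rmk-goodnessOfFOnV0} and Rees-coherence of ${}^LV^{|L|}\cB_f$---do not do what you need: that remark concerns hypersurface $V$-filtrations, not ${}^LV$, and there is no ``top-degree uniqueness'' inside ${}^LV^{|L|}\cB_f$ that isolates a single $\de_t^\beta\delta_f$. What actually resolves the Koszul-cancellation worry you correctly identified is the elementary fact that $f_1,\dots,f_r$ form a regular sequence: passing to ${\rm Gr}^F$, the Koszul complex of $t_1,\dots,t_r$ on $\cB_f$ becomes the Koszul complex of $f_1,\dots,f_r$ on $\bigoplus_\alpha\cO_X\de_t^\alpha\delta_f$, which is acyclic in degrees $<r$. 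This yields the strictness
\[
F_{k+r}\cB_f\cap\textstyle\sum_i t_i\cB_f=\sum_i t_iF_{k+r}\cB_f.
\]
With this, the induction on $|\beta|$ is unnecessary. From $F_k=P_k$ one obtains $F_{k+r}\cB_f\subseteq F_{k+r}{}^LV^{|L|}\cB_f+\sum_i t_iF_{k+r}\cB_f$; choosing any $\lambda$ with $F_{k+r}\cB_f\subseteq{}^LV^\lambda\cB_f$ (possible since $F_{k+r}\cB_f$ is a finite $\cO_X$-module), the right-hand side lies in ${}^LV^{\min(|L|,\,\lambda+\min_ia_i)}\cB_f$, and iterating finitely many times gives $F_{k+r}\cB_f\subseteq{}^LV^{|L|}\cB_f$. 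The $k$-rational case then needs no further bootstrap: once $F_{k+r}\cB_f\subseteq{}^LV^{|L|}\cB_f$, writing $v=u+\sum_it_iv_i$ with $L(t\de_t)u\in{}^LV^{>|L|}\cB_f$ and $v_i\in F_{k+r}\cB_f$, the identity $[L(t\de_t),t_i]=a_it_i$ together with $t_i{}^LV^{|L|}\cB_f\subseteq{}^LV^{>|L|}\cB_f$ gives $L(t\de_t)v\in{}^LV^{>|L|}\cB_f$.
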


\subsection{Weighted homogeneous complete intersections}\label{whci} Next, we prove \corollaryref{cor-ComputeMinExp}. We assume that $f_1,\dots, f_r \in \C[x_1,\dots, x_n]$ are weighted homogeneous of degrees $d_1 \leq \dots \leq d_r$ which define $Z \subseteq \A^n_x$, a complete intersection such that $0 \in Z$ is an isolated singular point. Here weighted homogeneous means there exist $w_1,\dots, w_n \in \Z_{>0}$ such that if $\theta_w = \sum_{i=1}^n w_i x_i \de_{x_i}$, then
\[ \theta_w f_j = d_j f_j \text{ for all } j.\]

We assume throughout this section that $d_1 + \dots + d_r \leq |w|$. It was shown in \cite{CDM3}*{Prop. 2.1} that this implies
\begin{equation} \label{eq-GenUpperBound} \widetilde{\alpha}(Z) \leq r + \frac{|w| - \sum_{i=1}^r d_i}{d_r}.\end{equation}

Our goal is the following theorem, which is a strengthening of the result of \cite{CDM3} in the case of Du Bois singularities:
\begin{thm} Let $Z = V(f_1,\dots, f_r) \subseteq X=\A^n_x$ be defined by $f_1,\dots, f_r$ which are weighted homogeneous of degrees $2 \leq d_1\leq \dots \leq d_r$ satisfying $|w| \geq d_1 + \dots + d_r$. Assume $Z$ is a complete intersection with an isolated singularity at $0$. Then
\[ r+ \left\lfloor \frac{|w| - \sum_{i=1}^r d_i}{d_r}\right\rfloor\leq \widetilde{\alpha}(Z)\leq r+ \frac{|w| - \sum_{i=1}^r d_i}{d_r}. \]

Thus, $\lfloor \widetilde{\alpha}(Z)\rfloor = r+ \lfloor \frac{|w| - \sum_{i=1}^r d_i}{d_r}\rfloor$. Moreover, $\widetilde{\alpha}(Z) = \lfloor \widetilde{\alpha}(Z)\rfloor$ if and only if 
\[d_r \,\bigg{|}\, |w| - \sum_{i=1}^r d_i.\]
\end{thm}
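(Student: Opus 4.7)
The upper bound $\widetilde{\alpha}_0(Z)\le r+(|w|-\sum d_i)/d_r$ is established in \cite{CDM3}*{Prop.~2.1}, so I focus on the lower bound $\widetilde{\alpha}_0(Z)\ge r+k$; the equality criterion then follows by comparing the two bounds together with the $k$-rational half of \corollaryref{cor-LVMinExp}. The plan is to apply that corollary with the weighted slope
\[
L\;=\;\sum_{i=1}^r d_i\,s_i,\qquad |L|=\sum_{i=1}^r d_i,
\]
reducing the lower bound to the containment $\partial_t^\beta\delta_f\in{}^L V^{|L|}\cB_f$ for every $\beta\in\N^r$ with $|\beta|\le k$.

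The first ingredient is an operator identity on $\cB_f$ produced by the weighted Euler relation $\theta_w f_i=d_i f_i$, where $\theta_w=\sum_{j=1}^n w_j x_j\partial_{x_j}$. A direct computation in $\cB_f$ using $t_i\delta_f=f_i\delta_f$ and $t_i\partial_{t_i}\delta_f=f_i\partial_{t_i}\delta_f-\delta_f$ yields
\[
L(s)\delta_f\;=\;-\sum_{i=1}^r d_i f_i\partial_{t_i}\delta_f\;=\;\theta_w\delta_f,
\]
and commuting $L(s)$ past $\partial_t^\beta$ via $L(s)\partial_t^\beta=\partial_t^\beta(L(s)+L(\beta))$ gives the more general identity $L(s)\partial_t^\beta\delta_f=(\theta_w+L(\beta))\partial_t^\beta\delta_f$. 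Since $L(s)$ and $\theta_w$ involve disjoint sets of variables they commute as operators on $\cB_f$, and an easy induction yields $P(L(s))\partial_t^\beta\delta_f=P(\theta_w+L(\beta))\partial_t^\beta\delta_f$ for every $P\in\C[w]$.

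Combined with the $b$-function characterization of ${}^L V$-order from \propositionref{prop-characterizeLV}, the containment $\partial_t^\beta\delta_f\in{}^L V^{|L|}\cB_f$ is equivalent to the bound $\mu\le -|L|-L(\beta)$ on every root $\mu$ of the minimal polynomial of $\theta_w$ acting on $\partial_t^\beta\delta_f$ modulo ${}^L V^1\cD_T\cdot\partial_t^\beta\delta_f$. Since $L(\beta)\le d_r|\beta|\le d_r k\le|w|-|L|$ whenever $|\beta|\le k$, it is enough to prove the uniform estimate $\mu\le-|w|$. This uniform $\theta_w$-eigenvalue bound is the main technical point: the approach is to identify the relevant quotient with a weighted-graded Jacobian-type module attached to the isolated complete intersection $(f_1,\dots,f_r)$ and to control its top $\theta_w$-weight by $|w|$. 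The natural route is to apply the cyclic-cover construction of Section~\ref{sect-CyclicCovers} (which isotropizes the slope) or an iterated Thom--Sebastiani-type argument via \propositionref{prop-LBfunctionTS}, and to import the quasi-homogeneous hypersurface calculation ($r=1$) of \cite{CDM3}, where the required bound reduces to the classical fact that the top weighted degree of the Milnor algebra is $\sum_j(d-w_j)$.

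For the equality criterion, write $|w|-|L|=d_r k+\rho$ with $0\le\rho<d_r$. When $\rho=0$ the upper and lower bounds coincide, forcing $\widetilde{\alpha}_0(Z)=r+k$. When $\rho>0$, the strict slack $L(\beta)\le d_r k<|w|-|L|$ promotes $\mu\le-|w|$ to $\mu+L(\beta)+|L|<0$ on all relevant $\mu$; via the identity $L(t\partial_t)\partial_t^\beta\delta_f=-(\theta_w+L(\beta)+|L|)\partial_t^\beta\delta_f$ this forces $L(t\partial_t)\partial_t^\beta\delta_f\in{}^L V^{>|L|}\cB_f$ for all $|\beta|\le k$, so the $k$-rational criterion of \corollaryref{cor-LVMinExp} is satisfied and $\widetilde{\alpha}_0(Z)>r+k$. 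The hardest step is thus the uniform $\theta_w$-eigenvalue bound in the complete intersection setting, where no ready-made Milnor-algebra description is available; the cyclic-cover reduction that isotropizes $L$, followed by the hypersurface calculation, is the natural way to obtain it.
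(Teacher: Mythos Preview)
Your overall strategy matches the paper's: use the slope $L=\sum_i d_i s_i$, establish the identity $L(s)\de_t^\beta\delta_f=(\theta_w+L(\beta))\de_t^\beta\delta_f$, and reduce the lower bound to showing that the $\theta_w$-eigenvalue on the image of $\de_t^\beta\delta_f$ in the relevant graded piece is at most $-|w|$. You correctly isolate this eigenvalue bound as the crux of the argument.

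The gap is that you do not prove this bound, and the route you sketch (cyclic covers to isotropize $L$, Thom--Sebastiani, reduction to a hypersurface Milnor-algebra computation) is both unexecuted and far more elaborate than necessary. The paper's argument is direct and needs nothing from the $r=1$ case: by \lemmaref{lem-supportGrL}, ${\rm Gr}_L^\chi(\cB_f)$ is supported on $Z_{\rm sing}$ for $\chi<|L|$, and under the isolated-singularity hypothesis this means it is supported at the origin. Kashiwara's equivalence then writes any such module as $\bigoplus_{\gamma\in\N^n}\cN\,\de_x^\gamma\delta_0$, on which one computes $\theta_w(\de_x^\gamma\eta\delta_0)=(-|w|-w\cdot\gamma)\de_x^\gamma\eta\delta_0$. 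Hence every $\theta_w$-eigenvalue is $\le -|w|$, which is exactly the uniform estimate you were seeking. No Jacobian-type module, cyclic cover, or Thom--Sebastiani argument is needed.

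For the equality criterion, your identity $L(t\de_t)\de_t^\beta\delta_f=-(\theta_w+L(\beta)+|L|)\de_t^\beta\delta_f$ is correct, but the inference ``$\mu+L(\beta)+|L|<0$ forces $L(t\de_t)\de_t^\beta\delta_f\in{}^LV^{>|L|}\cB_f$'' is not justified as stated: you need to argue on the graded piece, not on $\cB_f$ itself. The paper handles this by the same support-and-eigenvalue argument applied one step further: when $d_r\nmid(|w|-|L|)$ and $|\beta|=k$, the strict inequality $|w|-L(\beta+e_i)>|L|-d_i$ forces $\de_t^{\beta+e_i}\delta_f\in{}^LV^{>|L|-d_i}\cB_f$, hence $t_i\de_{t_i}\de_t^\beta\delta_f\in{}^LV^{>|L|}\cB_f$ for every $i$, giving $L(t\de_t)F_{k+r}{\rm Gr}_L^{|L|}(\cB_f)=0$.
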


To prove the lower bound, we study the ${}^L V$-filtration on $\cB_f = \bigoplus_{\alpha \in \N^r} \cO_X \de_t^\alpha \delta_f$. Here $L = \sum_{i=1}^r d_i s_i$. This is the natural slope to consider in this example, by observing
\[ \theta_w(\delta_f) = L(s) \delta_f,\]
and so
\begin{equation} \label{eq-homogeneity} \theta_w(x^\alpha \de_t^\beta \delta_f) = (L(s-\beta) + w\cdot \alpha)(x^\alpha \de_t^\beta \delta_f),\end{equation}
using the fact that $\theta_w x^\alpha = x^\alpha \theta_w + \theta_w(x^\alpha) = x^\alpha(\theta_w + w\cdot \alpha)$ as elements of $\cD_{\A^n}$, and the fact that $\theta_w$ commutes with $\de_t^\beta$, but $L(s) \de_t^\beta = \de_t^\beta L(s-\beta)$.

We have the following general observation:

\begin{lem} \label{lem-supportGrL} Let $Z = V(f_1,\dots,f_r) \subseteq X$ be a reduced complete intersection of codimension $r$ in a smooth variety $X$. Then for any non-degenerate slope $L = \sum_{i=1}^r a_i s_i$, we have
\[ {\rm Gr}_L^{\lambda}(\cB_f) \text{ is supported on } Z_{\rm sing} \text{ for } \lambda \notin \Z_{\geq |L|},\]
where $|L| = \sum_{i=1}^r a_i$.
\end{lem}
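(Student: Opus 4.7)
The plan is to reduce to checking vanishing of $({\rm Gr}_L^\lambda \cB_f)|_{X \setminus Z_{\rm sing}}$ for $\lambda \notin \Z_{\geq |L|}$, and to do this on a cover of $X \setminus Z_{\rm sing}$ by the Zariski open subset $X \setminus Z$ and étale neighborhoods of points in $Z_{\rm reg} = Z \setminus Z_{\rm sing}$. The two pieces call for different arguments.

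On $X \setminus Z$, I would use the specialization construction. Since the underlying $\cO_X$-module of ${\rm Sp}_L(\cB_f)$ decomposes as $\bigoplus_{\chi \in \Q} {\rm Gr}_L^\chi(\cB_f)$, it suffices to show ${\rm Sp}_L(\cB_f)$ has support contained in $Z \times \A^r_z$. Recall $\tilde{\cB_f}_L = j_{L,*}(\cB_f \boxtimes \Q^H_{\mathbf G_m}[1])$; under the identification $\tilde T^L|_{u \neq 0} \cong T \times \mathbf G_m$ via $t_i = z_i u^{a_i}$, the support of $\tilde{\cB_f}_L|_{u\neq 0}$ lies in the locus $\{z_i u^{a_i} = f_i(x)\} \cap \{u \neq 0\}$. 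Taking the closure in $\tilde T^L$ and intersecting with $\{u = 0\}$ forces $f_i(x) = 0$ for all $i$, so the support of $\psi_u \tilde{\cB_f}_L = {\rm Sp}_L(\cB_f)$ is confined to $Z \times \A^r_z \times \{0\}$, and the vanishing on $X \setminus Z$ follows.

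Near a point $x_0 \in Z_{\rm reg}$, smoothness of $Z$ at $x_0$ lets me choose étale-local coordinates $y_1, \dots, y_n$ on $X$ with $y_i = f_i$ for $i \leq r$. Since the ${}^L V$-filtration is compatible with étale pullback, I reduce to treating $\cB_y$ when $y_1,\dots, y_r$ are part of a coordinate system. The formula $D(g \de_t^\alpha \delta_f) = D(g)\de_t^\alpha \delta_f - \sum_k D(f_k) g \de_t^{\alpha+e_k}\delta_f$ specialises to $\de_{y_i}\delta_y = -\de_{t_i}\delta_y$ for $i \leq r$; by a straightforward induction this yields $\de_t^\alpha \delta_y = (-1)^{|\alpha|}\de_y^\alpha \delta_y \in \cD_X \cdot \delta_y$ for all $\alpha \in \N^r$. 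Hence $\cB_y = \cD_X \cdot \delta_y \subseteq {}^L V^0 \cD_T \cdot \delta_y$, so $\cB_y$ is generated by $\delta_y$ over ${}^L V^0 \cD_T$, and in particular ${}^L V^0 \cD_T$-coherent. Applying \lemmaref{lem-LVtAdic} then gives that the ${}^L V$-filtration of $\cB_y$ is $t$-adic, with jumping numbers contained in $\{|L| + L(\alpha) : \alpha \in \N^r\} \subseteq \Z_{\geq |L|}$, so ${\rm Gr}_L^\lambda \cB_y = 0$ for $\lambda \notin \Z_{\geq |L|}$.

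Together the two cases give vanishing of ${\rm Gr}_L^\lambda \cB_f$ on $X \setminus Z_{\rm sing}$ for $\lambda \notin \Z_{\geq |L|}$. The main obstacle I anticipate is justifying the support computation in the first case cleanly—specifically, confirming that $j_{L,*}$ of the pull-back is supported in the Zariski closure of the open stratum, and that $\psi_u$ of a module supported in $\{z_i u^{a_i} = f_i(x)\}$ is supported in the intersection of this locus with $\{u=0\}$; the second case is an elementary computation once one has the étale coordinates and invokes \lemmaref{lem-LVtAdic}.
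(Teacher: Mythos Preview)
Your argument is correct, and the treatment near $Z_{\rm reg}$ is exactly the paper's: show $\cB_f$ is ${}^L V^0\cD_T$-coherent by exhibiting $\de_t^\beta\delta_f\in\cD_X\cdot\delta_f$, then invoke \lemmaref{lem-LVtAdic}. Your worry about the support computation is unnecessary: that $j_{L,*}$ sends a sheaf supported on $S$ to one supported on $\overline S$, and that $\psi_u(\cM)$, being a subquotient of $\cM$ supported in $\{u=0\}$, has support in $\mathrm{supp}(\cM)\cap\{u=0\}$, are both routine.

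The only genuine difference is how you handle $X\setminus Z$. The paper treats all of $U=X\setminus Z_{\rm sing}$ uniformly via \lemmaref{lem-LVtAdic}; you instead bring in the specialization construction on $X\setminus Z$. Your detour works but is heavier than needed. In fact the paper's phrasing is slightly imprecise---$f_1,\dots,f_r$ need not extend to coordinates at points of $U$ lying off $Z$---but \lemmaref{lem-LVtAdic} still applies there: locally on $X\setminus Z$ some $f_j$ is a unit, and then for $N$ with $Na_j\geq a_i$ one has $t_j^N\de_{t_i}\in{}^L V^0\cD_T$ and
\[
t_j^N\,\de_t^{\beta+e_i}\delta_f \;=\; f_j^N\,\de_t^{\beta+e_i}\delta_f \;+\; (\text{terms } \de_t^\gamma\delta_f \text{ with } |\gamma|\leq|\beta|),
\]
so by induction $\de_t^\beta\delta_f\in{}^L V^0\cD_T\cdot\delta_f$ for all $\beta$, and $\cB_f$ is ${}^L V^0\cD_T$-coherent on all of $U$. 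This keeps the proof to a single uniform step, whereas your two-case split trades simplicity for a more conceptual (and independently interesting) reason why nothing survives on $X\setminus Z$.
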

\begin{proof} Let $U \subseteq X$ be an open subset such that $U \cap Z = Z_{\rm reg}$. Then, locally on $U$, the functions $f_1,\dots, f_r$ restrict to part of a system of coordinates on $U$. Write $\de_{f_1},\dots, \de_{f_r}$ for the derivations in this system of coordinates which satisfy
\[ \de_{f_i}(f_j) = \begin{cases} 1 & i =j \\ 0 & i \neq j\end{cases}.\]

In this case, we have that $\cB_f$ is ${}^L V^{0}\cB_f$ coherent, which by \lemmaref{lem-LVtAdic} proves the claim. Indeed, it suffices to show that, for every $\beta \in \N^r$, the element $\de_t^\beta \delta_f \in {}^L V^{0}\cD \cdot \delta_f$. But we have
\[ \de_t^\beta \delta_f = \de_f^\beta \delta_f \in \cD_X \cdot \delta_f\]
over $U$, proving the claim.
\end{proof}

Return now to the case that $Z = V(f_1,\dots, f_r) \subseteq \A^n_x$ is a complete intersection with an isolated singular point at $0$, such that each $f_i$ is weighted homogeneous of degree $d_i$, meaning
\[ \theta_w f_i = d_i f_i.\]

By reordering, we can assume $d_1 \leq \dots \leq d_r$. The following observation is elementary:
\begin{lem} \label{lem-criterionLV} In the situation above, for any $\alpha \in \N^n$ and $\beta \in \N^r$, we have
\[ x^\alpha \de_t^\beta \delta_f \in {}^L V^{\min\{|L|,|w| + w\cdot \alpha- L(\beta)\}}\cB_f,\]
where $|w| = \sum_{i=1}^n w_i$ and $w\cdot \alpha = \sum_{i=1}^n w_i\alpha_i$.
\end{lem}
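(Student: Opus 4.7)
I would define $\lambda_0 = \sup\{\lambda \in \Q : x^\alpha \de_t^\beta \delta_f \in {}^L V^\lambda \cB_f\}$, which by discreteness and left-continuity of the ${}^L V$-filtration (together with $\bigcap_\lambda {}^L V^\lambda \cB_f = 0$) is finite and attained, so that the image $\bar u \in {\rm Gr}_L^{\lambda_0}(\cB_f)$ is nonzero. The claim is equivalent to
\[
\lambda_0 \;\geq\; \min\{|L|,\ |w| + w\cdot\alpha - L(\beta)\}.
\]
If $\lambda_0 \in \Z_{\geq |L|}$ the bound is immediate, so the main case is $\lambda_0 \notin \Z_{\geq |L|}$. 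In that case \lemmaref{lem-supportGrL}, combined with the hypothesis that $0$ is the only singular point of $Z$, forces ${\rm Gr}_L^{\lambda_0}(\cB_f)$ to be supported at $\{0\}$, and I need to show $\lambda_0 \geq |w| + w\cdot\alpha - L(\beta)$.

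The key ingredient is the identity \eqref{eq-homogeneity}, which I rewrite as
\[
(\theta_w - L(s))\, u \;=\; (w\cdot\alpha - L(\beta))\, u \quad \text{in } \cB_f,
\]
where $u = x^\alpha \de_t^\beta \delta_f$. By Kashiwara's equivalence any $\cD_X$-module supported at $\{0\} \subseteq \A^n_x$ has the form $\bigoplus_{\gamma \in \N^n} M_0 \de_x^\gamma$, and a direct computation shows that $\theta_w = \sum w_i x_i \de_{x_i}$ acts semisimply on such a module with spectrum contained in $\{-(|w| + w\cdot\gamma) : \gamma \in \N^n\} \subseteq \Z_{\leq -|w|}$. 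Since $\theta_w$ and $L(s)$ commute inside $\cD_T$, the locally nilpotent operator $N := L(s) + \lambda_0$ on ${\rm Gr}_L^{\lambda_0}(\cB_f)$ also commutes with $\theta_w$ and hence preserves each of its eigenspaces.

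The subtle step is promoting $\bar u$ to an honest $\theta_w$-eigenvector. On the graded piece the displayed identity becomes $(\theta_w - \mu_0)\,\bar u = N\,\bar u$, where $\mu_0 := -\lambda_0 + w\cdot\alpha - L(\beta)$. Decomposing $\bar u = \sum_\mu \bar u_\mu$ into $\theta_w$-eigencomponents (a finite sum, since each local section is a finite $M_0$-linear combination of $\de_x^\gamma$'s), one obtains $(\mu - \mu_0)\,\bar u_\mu = N\,\bar u_\mu$ for every $\mu$. For $\mu \neq \mu_0$ this iterates to $\bar u_\mu = (\mu - \mu_0)^{-k} N^k\,\bar u_\mu$, which vanishes once $N^k$ kills $\bar u_\mu$. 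Hence $\bar u = \bar u_{\mu_0} \neq 0$, forcing $\mu_0$ to lie in the $\theta_w$-spectrum, i.e.\ $\mu_0 \leq -|w|$, which rearranges to the desired $\lambda_0 \geq |w| + w\cdot\alpha - L(\beta)$. This eigenvector-promotion step is the only delicate point; the rest is bookkeeping with the $V$-filtration and Kashiwara's equivalence.
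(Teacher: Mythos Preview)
Your proposal is correct and follows essentially the same approach as the paper: reduce to the case where the class of $x^\alpha\de_t^\beta\delta_f$ is nonzero in some ${\rm Gr}_L^{\lambda_0}(\cB_f)$ with $\lambda_0 \notin \Z_{\geq |L|}$, invoke \lemmaref{lem-supportGrL} and Kashiwara's equivalence to see that $\theta_w$ acts semisimply with spectrum in $\Z_{\leq -|w|}$, and then use the homogeneity relation together with nilpotency of $L(s)+\lambda_0$ to identify the $\theta_w$-eigenvalue of $\bar u$ as $w\cdot\alpha - L(\beta) - \lambda_0$. The only difference is presentational: the paper observes directly that $(\theta_w - \mu_0)^k\bar u = N^k\bar u$ (using $[\theta_w,N]=0$) so that $\bar u$ is a generalized $\theta_w$-eigenvector, hence an honest one by semisimplicity, whereas you reach the same conclusion by decomposing $\bar u$ into $\theta_w$-eigencomponents and killing the off-diagonal ones.
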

\begin{proof} Assume $x^\alpha \de_t^\beta \delta_f$ defines a non-zero element of ${\rm Gr}_L^\chi (\cB_f)$ with $\chi < |L|$. Our goal is to establish the inequality $\chi \geq w\cdot \alpha + |w| - L(\beta)$.

As $\chi < |L|$, we have by \lemmaref{lem-supportGrL} above that ${\rm Gr}_L^\chi(\cB_f)$ is supported on $Z_{\rm sing} = \{0\}$. By Kashiwara's lemma (here applied to the study of the eigenspaces of $\theta_w$ on coherent ${\rm Gr}_L^0(\cD_T)$-modules), we can write
\[ {\rm Gr}_L^\chi (\cB_f) = \bigoplus_{\gamma \in \N^n} \cN \de_x^\gamma \delta_0,\]
where $\cN$ is a coherent module over ${\rm Gr}_L^0(\cD_{\A^r_t})$. For any $\eta \in \cN$, we have $x_i(\eta \delta_0) = 0$. Moreover, the Leibniz rule gives
\[ \theta_w \de_x^\gamma = \de_x^\gamma (\theta_w - w\cdot \gamma).\]

Finally, $\theta_w = \sum_{i=1}^n w_i x_i \de_{x_i} = \sum_{i=1}^n w_i \de_{x_i} x_i - |w|$. Putting this together, we see that
\[ \theta_w(\de_x^\gamma(\eta \delta_0)) = \de_x^\gamma(\theta_w \eta \delta_0) = (-|w| - w\cdot \gamma)\de_x^\gamma\eta \delta_0.\]

Hence, in this situation, any generalized eigenvector of $\theta_w$ is actually an eigenvector and its eigenvalue is $\leq -|w|$.

Assume $x^\alpha \de_t^\beta \delta_f$ defines a non-zero element of ${\rm Gr}_L^\chi(\cB_f)$. Then
\[ \theta_w(x^\alpha \de_t^\beta \delta_f) = (L(s-\beta)+w\cdot \alpha)(x^\alpha \de_t^\beta \delta_f),\]
and so
\[ (L(s)+\chi)(x^\alpha \de_t^\beta \delta_f) = (\theta_w + L(\beta) - w\cdot \alpha +\chi)(x^\alpha \de_t^\beta \delta_f).\]

As $L(s)+\chi$ is nilpotent on ${\rm Gr}_L^\chi(\cB_f)$, we see that $x^\alpha \de_t^\beta \delta_f$ is a generalized eigenvector for $\theta_w$ with eigenvalue $w\cdot \alpha - L(\beta) - \chi$. This gives
\[ w\cdot \alpha - L(\beta) - \chi \leq -|w|,\]
and so $\chi \geq w\cdot \alpha - L(\beta) + |w|$, proving the claim.
\end{proof}

\begin{cor} For $k = \lfloor \frac{|w| - |L|}{d_r}\rfloor$, we have
\[ F_{k+r} \cB_f \subseteq {}^L V^{|L|}\cB_f, \quad F_{k+r+1}\cB_f \not \subseteq {}^L V^{|L|}\cB_f.\]
\end{cor}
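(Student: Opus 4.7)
The plan is to obtain both statements as quick consequences of Lemma~\ref{lem-criterionLV} and the upper bound $\widetilde{\alpha}_0(Z) \leq r + \frac{|w| - |L|}{d_r}$ from \cite{CDM3}*{Prop. 2.1} that was recalled at the start of this subsection. Note $|L| = \sum_{i=1}^r d_i$ since $L = \sum_{i=1}^r d_i s_i$, and the Hodge filtration reads $F_{k+r}\cB_f = \bigoplus_{|\beta|\leq k} \cO_X\, \de_t^\beta \delta_f$.

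For the first containment, I would reduce to checking that each generator $x^\gamma \de_t^\beta \delta_f$ with $\gamma \in \N^n$ and $|\beta| \leq k$ lies in ${}^L V^{|L|}\cB_f$. By Lemma~\ref{lem-criterionLV}, such a generator lies in ${}^L V^{\chi}\cB_f$ with $\chi = \min\{|L|,\, |w| + w\cdot\gamma - L(\beta)\}$, so it suffices to show $|w| + w\cdot\gamma - L(\beta) \geq |L|$. Since $w\cdot\gamma \geq 0$, this reduces further to $L(\beta) \leq |w| - |L|$, and this follows from the chain
\[ L(\beta) = \sum_{i=1}^r d_i\beta_i \leq d_r |\beta| \leq d_r k \leq |w| - |L|,\]
where the last inequality is exactly the definition $k = \lfloor (|w|-|L|)/d_r\rfloor$.

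For the non-containment, I would use the upper bound $\widetilde{\alpha}_0(Z) \leq r + \frac{|w|-|L|}{d_r} < r + k + 1$, where the strict inequality is immediate from $\frac{|w|-|L|}{d_r} < k+1$ (the defining property of the floor). By the characterization $\widetilde{\alpha}_0(Z) \geq r + (k+1) \iff F_{(k+1)+r}\cB_f \subseteq {}^L V^{|L|}\cB_f$ given by \corollaryref{cor-LVMinExp} (applied with $p = k+1$), the strict inequality $\widetilde{\alpha}_0(Z) < r + k + 1$ translates precisely into the failure of the desired containment.

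Neither step presents a real difficulty: the first is a direct numerical check against Lemma~\ref{lem-criterionLV}, and the second is essentially a restatement of the already-known upper bound. The only place to be careful is in keeping track of the Hodge-filtration shift by $r$ and the fact that $|L|$ here equals $\sum_i d_i$ (not $r$).
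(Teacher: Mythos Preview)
Your proposal is correct and follows essentially the same approach as the paper: the first containment is obtained from \lemmaref{lem-criterionLV} via the inequality $L(\beta) \leq d_r|\beta| \leq d_r k \leq |w| - |L|$, and the non-containment is deduced from the general upper bound on $\widetilde{\alpha}_0(Z)$ together with the equivalence between the containment $F_{k+1+r}\cB_f \subseteq {}^L V^{|L|}\cB_f$ and $(k+1)$-Du Bois singularities. The only minor difference is that you check all monomials $x^\gamma \de_t^\beta \delta_f$, whereas the paper checks only $\de_t^\beta \delta_f$ (which suffices since ${}^L V^{|L|}\cB_f$ is an $\cO_X$-module); this is harmless.
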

\begin{proof} To prove this, we show that for any $\beta$ with $|\beta| = k$ we have
\[ \de_t^\beta \delta_f \in {}^L V^{|L|}\cB_f.\]

By the previous lemma, we want to understand when $|w| - L(\beta) \geq |L|$. By varying over all $\beta$ with $|\beta| = k$, the maximal value of $L(\beta)$ is $k d_r$. Hence, for any $p\in \mathbf{Z}$ such that $|w| - |L| \geq p d_r$, we get $F_{p+r} \cB_f \subseteq {}^L V^{|L|}\cB_f$.

The second claim follows from the general upper bound in Equation~\eqref{eq-GenUpperBound}.
\end{proof}

In summary, with the discussion above, we have the following result.

\begin{cor} If $Z = V(f_1,\dots, f_r) \subseteq \A^n_x$ is a complete intersection with isolated singularity at $0$, such that $f_i$ is weighted homogeneous of degree $d_i$ and $d_1 \leq \dots \leq d_r$ and $|w| \geq d_1 + \dots + d_r$, then
\[\lfloor \widetilde{\alpha}_0(Z) \rfloor = r+ \left\lfloor \frac{|w| - |L|}{d_r}\right\rfloor.\]
\end{cor}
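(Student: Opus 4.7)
The final corollary is a synthesis of the immediately preceding corollary with the characterization of the minimal exponent from \corollaryref{cor-LVMinExp} and the upper bound recalled at the start of the subsection. My plan is to show that these two ingredients pinch $\widetilde{\alpha}_0(Z)$ into the half-open interval $[r+k, r+k+1)$, which forces the floor to equal $r+k$.

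First, I would apply the preceding corollary: with $k = \lfloor \frac{|w|-|L|}{d_r}\rfloor$, we have $F_{k+r}\cB_f \subseteq {}^L V^{|L|}\cB_f$. Since $F_{k+r}\cB_f$ is generated as an $\cO_X$-module by elements of the form $x^\alpha \de_t^\beta \delta_f$ with $|\beta| \leq k$, and since ${}^L V^{|L|}\cB_f$ is stable under multiplication by $\cO_X$ (because $\cO_X \subseteq {}^L V^0 \cD_T$), this containment is equivalent to $\de_t^\beta \delta_f \in {}^L V^{|L|}\cB_f$ for all $|\beta| \leq k$. By the ``Du Bois'' half of \corollaryref{cor-LVMinExp}, this gives $\widetilde{\alpha}_0(Z) \geq r+k$. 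Here I am tacitly using that, since the singularity of $Z$ is isolated at $0$, the local minimal exponent $\widetilde{\alpha}_0(Z)$ agrees with the quantity controlling the $k$-Du Bois property after localizing to a neighborhood of the origin.

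For the matching upper bound, I invoke $\widetilde{\alpha}_0(Z) \leq r + \frac{|w|-|L|}{d_r}$ from \cite{CDM3}*{Prop. 2.1}, which was quoted at the beginning of the subsection. The definition $k = \lfloor \frac{|w|-|L|}{d_r}\rfloor$ gives $\frac{|w|-|L|}{d_r} < k+1$, whence $\widetilde{\alpha}_0(Z) < r+k+1$. Combining the two bounds yields
\[ r+k \;\leq\; \widetilde{\alpha}_0(Z) \;<\; r+k+1,\]
and $\lfloor \widetilde{\alpha}_0(Z)\rfloor = r+k$ follows immediately. There is no genuine obstacle at this stage: the substantive work has already been carried out in \lemmaref{lem-criterionLV} and the preceding corollary, where weighted homogeneity together with the Euler operator identity $\theta_w(x^\alpha \de_t^\beta \delta_f) = (L(s-\beta) + w\cdot \alpha)(x^\alpha \de_t^\beta \delta_f)$ was used to pin down the ${}^L V$-orders of the generators of $\cB_f$.
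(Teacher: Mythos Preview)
Your proof is correct and follows essentially the same approach as the paper: you combine the containment $F_{k+r}\cB_f \subseteq {}^L V^{|L|}\cB_f$ from the preceding corollary (which, via \corollaryref{cor-LVMinExp}, yields $\widetilde{\alpha}_0(Z) \geq r+k$) with the upper bound from \cite{CDM3}*{Prop.~2.1} recalled at the start of the subsection. The paper's proof is terser but invokes exactly these two ingredients.
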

\begin{proof} This follows immediately from $F_{k+r} \cB_f \subseteq {}^L V^{|L|}\cB_f$ when $k = \lfloor \frac{|w|-|L|}{d_r}\rfloor$ and the previous proposition.
\end{proof}

Using the general upper bound in Equation~\ref{eq-GenUpperBound}, we have the following:
\begin{cor} If $d_r \big{|}\, |w| - |L|$, we have
\[ \widetilde{\alpha}(Z) = r+ \frac{|w| - |L|}{d_r}.\]
\end{cor}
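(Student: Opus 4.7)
The plan is to sandwich $\widetilde{\alpha}_0(Z)$ between two bounds that coincide exactly under the divisibility hypothesis, so no new input is needed beyond the two preceding corollaries and the general upper bound recalled at the start of Subsection~\ref{whci}.

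Set $k := \frac{|w|-|L|}{d_r}$, which is a non-negative integer by assumption (recall $|L| = d_1 + \dots + d_r \leq |w|$). Then $\lfloor \frac{|w|-|L|}{d_r} \rfloor = k$, so the immediately preceding corollary gives $\lfloor \widetilde{\alpha}_0(Z) \rfloor = r + k$, and in particular the lower bound
\[ \widetilde{\alpha}_0(Z) \geq r + k.\]
This lower bound is the substantive content, coming ultimately from the inclusion $F_{k+r}\cB_f \subseteq {}^L V^{|L|}\cB_f$ proved via \lemmaref{lem-criterionLV} together with \corollaryref{cor-LVMinExp}.

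For the matching upper bound, I would invoke the general inequality $\widetilde{\alpha}_0(Z) \leq r + \frac{|w|-|L|}{d_r}$ attributed to \cite{CDM3}*{Prop.~2.1} and already recorded at the opening of this subsection. Under the divisibility assumption, the right-hand side is precisely $r+k$, so combining with the previous display forces the equality $\widetilde{\alpha}_0(Z) = r + k = r + \frac{|w|-|L|}{d_r}$.

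There is no serious obstacle in this final step; all the work has already been done. The conceptual core lies in identifying the slope $L = \sum_{i=1}^{r} d_i s_i$, which is singled out by the weighted-Euler identity $\theta_w \delta_f = L(s)\delta_f$ and makes the estimate in \lemmaref{lem-criterionLV} clean. Once one has that estimate and the previous floor computation, the present corollary is a one-line squeeze between the integer lower bound and the fractional upper bound, which happen to agree exactly when $d_r \mid (|w|-|L|)$.
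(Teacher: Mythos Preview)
Your proposal is correct and matches the paper's approach exactly: the corollary is deduced as a one-line squeeze between the integer lower bound $\lfloor \widetilde{\alpha}_0(Z)\rfloor = r + \lfloor \frac{|w|-|L|}{d_r}\rfloor$ from the preceding corollary and the general upper bound $\widetilde{\alpha}_0(Z) \leq r + \frac{|w|-|L|}{d_r}$ from \cite{CDM3}, which coincide under the divisibility hypothesis.
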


As we can compute the weight filtration on $\cH^r_Z(\cO_X)$ using the nilpotent operator $L(t\de_t)$ on ${\rm Gr}_L^{|L|}(\cB_f)$, we also have the following:
\begin{lem} If $d_r \nmid |w| - |L|$, then
\[ \widetilde{\alpha}_0(Z) > r+\frac{|w|-|L|}{d_r}.\]
\end{lem}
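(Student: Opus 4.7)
The goal is to establish $k$-rationality of $Z$ at $0$ via Corollary~\corollaryref{cor-LVMinExp}, which is equivalent to $\widetilde{\alpha}_0(Z)>r+k$; combined with the upper bound $\widetilde{\alpha}_0(Z)\leq r+(|w|-|L|)/d_r$ cited above, this pins $\widetilde{\alpha}_0(Z)$ strictly above its floor when $d_r\nmid(|w|-|L|)$. Setting $k=\lfloor (|w|-|L|)/d_r\rfloor$, by Corollary~\corollaryref{cor-LVMinExp} it suffices to show
\[
L(t\de_t)\,F_{k+r}\cB_f \subseteq {}^L V^{>|L|}\cB_f.
\]

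Using $F_{k+r}\cB_f=\bigoplus_{|\beta|\leq k}\cO_X\de_t^\beta\delta_f$, the question reduces to each monomial $x^\alpha\de_t^\beta\delta_f$ with $\alpha\in\N^n$ and $|\beta|\leq k$. A direct expansion gives
\[
L(t\de_t)(x^\alpha\de_t^\beta\delta_f)=\sum_{j=1}^r d_j\,t_j\,x^\alpha\de_t^{\beta+e_j}\delta_f.
\]
For each $j$, Lemma~\lemmaref{lem-criterionLV} places $x^\alpha\de_t^{\beta+e_j}\delta_f$ in ${}^L V^{\min\{|L|,\,|w|+w\cdot\alpha-L(\beta)-d_j\}}\cB_f$, and multiplication by $t_j\in{}^L V^{d_j}\cD_T$ promotes the $j$-th summand to ${}^L V^{\min\{|L|+d_j,\,|w|+w\cdot\alpha-L(\beta)\}}\cB_f$. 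Using $d_1\leq\cdots\leq d_r$, summing yields
\[
L(t\de_t)(x^\alpha\de_t^\beta\delta_f)\in {}^L V^{\min\{|L|+d_1,\,|w|+w\cdot\alpha-L(\beta)\}}\cB_f.
\]

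The hypothesis $d_r\nmid(|w|-|L|)$ now produces the strict gain: writing $|w|-|L|=kd_r+\rho$ with $0<\rho<d_r$, any $|\beta|\leq k$ gives $L(\beta)\leq kd_r<|w|-|L|$, so $|w|+w\cdot\alpha-L(\beta)\geq |w|-L(\beta)>|L|$ for every $\alpha$. Combined with the trivial inequality $|L|+d_1>|L|$, the minimum above strictly exceeds $|L|$, establishing the required inclusion and hence $k$-rationality.

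The main subtlety — and the reason the previous attempt bottomed out at the floor $r+k$ — is that Lemma~\lemmaref{lem-criterionLV} only bounds the ${}^L V$-order of $x^\alpha\de_t^\beta\delta_f$ from above by $\min\{|L|,\,|w|+w\cdot\alpha-L(\beta)\}$, and this bound saturates at $|L|$ in exactly the regime of interest, so the strict inclusion into ${}^L V^{>|L|}\cB_f$ cannot be read off the element itself. The decisive $d_j$-shift is supplied instead by the explicit $t_j\in{}^L V^{d_j}\cD_T$ appearing in every summand of $L(t\de_t)$, which upgrades $|L|$ to $|L|+d_j\geq|L|+1$; this mechanism degenerates precisely when $d_r\mid(|w|-|L|)$, matching the sharp dichotomy recorded in the preceding corollary (where instead only $k$-liminality holds).
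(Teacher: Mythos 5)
Your proof is correct and takes essentially the same route as the paper: both arguments rest on \lemmaref{lem-criterionLV} applied to $\de_t^{\beta+e_j}\delta_f$, the shift coming from $t_j \in {}^L V^{d_j}\cD_T$, and the numerical estimate $L(\beta)\leq kd_r < |w|-|L|$ forced by $d_r \nmid (|w|-|L|)$, the only difference being that you verify $L(t\de_t)F_{k+r}\cB_f \subseteq {}^L V^{>|L|}\cB_f$ directly while the paper phrases the key membership $\de_t^{\beta+e_i}\delta_f \in {}^L V^{>|L|-d_i}\cB_f$ contrapositively. As you note, what this yields (for the paper as well) is $k$-rationality, i.e.\ $\widetilde{\alpha}_0(Z) > r+\lfloor (|w|-|L|)/d_r\rfloor$, which is exactly what \corollaryref{cor-ComputeMinExp} requires.
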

\begin{proof} Let $k = \lfloor \frac{|w| - |L|}{d_r}\rfloor$. We will show that, under the assumption $d_r \nmid |w| - |L|$, we have
\[ \de_t^{\beta+e_i} \delta_f \in {}^L V^{> |L| - d_i} \cB_f \text{ for all } |\beta| = k, 1\leq i\leq r.\]

Indeed, this implies that $L(t\de_t) F_{k+r} {\rm Gr}_L^{|L|}(\cB_f) = 0$, proving $$F_k W_{n+r} \cH^r_Z(\cO_X) = F_k \cH^r_Z(\cO_X) = P_k \cH^r_Z(\cO_X).$$

Assume there exists $\beta$ with $|\beta| = k$ and $1\leq i \leq r$ such that $$\de_t^{\beta+e_i}\delta_f \in {}^L V^{|L| - d_i}\cB_f \setminus {}^L V^{>|L|-d_i}\cB_f.$$ By \lemmaref{lem-criterionLV}, this means $|L| - d_i \geq |w| - L(\beta+e_i)$, so that $L(\beta) \geq  |w| - |L|$. But $L(\beta) \leq k d_r$, so we get
\[ |w| - |L| \leq k d_r,\]
and so $\frac{|w| - |L|}{d_r} \leq \lfloor \frac{|w| - |L|}{d_r}\rfloor$, contradicting our assumption.
\end{proof}

We end with an explicit computation of the ${}^L V$-filtration in this setting.

\begin{thm} \label{thm-computeLV} Define a filtration
\[ U^\lambda \cB_f = \begin{cases} \sum_{|w| + w\cdot \alpha - L(\beta) \geq \lambda} \cD_X\cdot( x^\alpha \de_t^\beta \delta_f) & \lambda \leq |L| \\ \sum_{i=1}^r t_i U^{\lambda - d_i} \cB_f & \lambda > |L| \end{cases}.\]

In the setting above, we have ${}^L V^\lambda \cB_f = U^\lambda \cB_f$.
\end{thm}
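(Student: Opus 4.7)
The plan is to verify that $U^\bullet\cB_f$ satisfies the three conditions of \propositionref{prop-characterizeLV}, yielding ${}^LV^\lambda\subseteq U^\lambda$, and to establish the reverse inclusion directly from \lemmaref{lem-criterionLV}. The filtration $U^\bullet$ is visibly discrete and left-continuous. For $U^\lambda\subseteq{}^LV^\lambda$ when $\lambda\leq|L|$, \lemmaref{lem-criterionLV} places each generator $x^\alpha\de_t^\beta\delta_f$ of $U^\lambda$ into ${}^LV^{\min\{|L|,|w|+w\cdot\alpha-L(\beta)\}}\cB_f\subseteq{}^LV^\lambda\cB_f$, and applying $\cD_X\subseteq{}^LV^0\cD_T$ preserves this; for $\lambda>|L|$, induction using $t_i{}^LV^{\lambda-d_i}\subseteq{}^LV^\lambda$ extends the inclusion to the recursive range.

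The key tool for the reverse direction is the $\mathbf{G}_m$-grading on $\cB_f$. Setting $\Theta=\theta_w+\sum_jd_jt_j\de_{t_j}$, a short computation using $\theta_wf_j=d_jf_j$ and $t_j\delta_f=f_j\delta_f$ yields $\Theta\delta_f=-|L|\delta_f$, so that $\cB_f=\bigoplus_d\cB_f^{(d)}$ decomposes into $\Theta$-eigenspaces with $\deg_G(x^\alpha\de_t^\beta\delta_f)=|w|+w\cdot\alpha-L(\beta)$. Rewriting $L(s)=-\sum_jd_j\de_{t_j}t_j$ in terms of $\Theta$ gives the operator identity $L(s)=\theta_w-\Theta-|L|$, so on $\cB_f^{(\lambda)}$ one has $L(s)+\lambda=\theta_w+|w|=\sum_iw_i\de_{x_i}x_i$. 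Condition (1) of \propositionref{prop-characterizeLV} reduces by induction on $\chi$ to the compatibilities $t_jU^\chi\subseteq U^{\chi+d_j}$ and $\de_{t_j}U^\chi\subseteq U^{\chi-d_j}$, both immediate from the grading acting on generators; condition (2) is the recursive definition of $U^\bullet$.

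The main content is condition (3), that $L(s)+\lambda$ vanishes on ${\rm Gr}_U^\lambda$. For $\lambda<|L|$ and $m\in\cB_f^{(\lambda)}$, the formula $(L(s)+\lambda)m=\sum_iw_i\de_{x_i}(x_im)$ presents the result as a $\cD_X$-combination of elements $x_im\in\cB_f^{(\lambda+w_i)}\subseteq U^{\lambda+w_i}\subseteq U^{>\lambda}$; since $L(s)$ commutes with $\cD_X$ and ${\rm Gr}_U^\lambda$ is $\cD_X$-generated by the image of $\cB_f^{(\lambda)}$, the vanishing propagates to the whole graded piece. For $\lambda>|L|$ the commutator $[L(s),t_i]=-d_it_i$ gives $(L(s)+\lambda)(t_im)=t_i(L(s)+\lambda-d_i)m\in t_iU^{>\lambda-d_i}\subseteq U^{>\lambda}$ by the inductive hypothesis, and $U^\lambda=\sum_it_iU^{\lambda-d_i}$ propagates the vanishing. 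The main technical hurdle is the boundary case $\lambda=|L|$, where $U^{>|L|}$ is defined recursively and is strictly smaller than $\cD_X\cdot\cB_f^{>|L|}$: here I would invoke the isolated-singularity hypothesis (so $(f_1,\dots,f_r)\supseteq\mathfrak{m}_0^N$ for some $N$) and iteratively apply the identity $(\beta_j+1)\de_t^\beta\delta_f=f_j\de_t^{\beta+e_j}\delta_f-t_j\de_t^{\beta+e_j}\delta_f$ to express each $x_im\in\cB_f^{(|L|+w_i)}$ as a sum $\sum_jt_jn_j$ with $n_j\in U^{|L|+w_i-d_j}$, placing it in $U^{|L|+w_i}\subseteq U^{>|L|}$. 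Combining, \propositionref{prop-characterizeLV} yields ${}^LV^\lambda\subseteq U^\lambda$, and with the reverse inclusion this gives the claimed equality.
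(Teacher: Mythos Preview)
Your strategy coincides with the paper's: one inclusion from \lemmaref{lem-criterionLV}, the other from verifying conditions (1)--(3) of \propositionref{prop-characterizeLV}. The paper dismisses (1)--(3) as ``trivial''; you have correctly isolated the one place where this is not entirely obvious, namely nilpotency of $L(s)+|L|$ on ${\rm Gr}_U^{|L|}$, where $U^{>|L|}$ switches to the recursive description. Everything you write up to that point is fine (your $\Theta$-eigenvalue is off by the additive constant $|w|+|L|$, but this is only a normalization and does not affect the identity $L(s)+\lambda=\theta_w+|w|$ on the weight-$\lambda$ piece).

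Your proposed fix at the boundary, however, has a real error. The hypothesis is that $Z=V(f_1,\dots,f_r)$ has an isolated singularity at $0$, which means the ideal defining $Z_{\rm sing}$ (generated by the $f_i$ together with the appropriate Jacobian minors) is $\mathfrak m_0$-primary. It does \emph{not} say $(f_1,\dots,f_r)\supseteq\mathfrak m_0^N$: that would force $Z\subseteq\{0\}$, contradicting $\dim Z=n-r>0$ in any nontrivial case. So the ingredient you invoke to terminate the iteration of $(\beta_j+1)\de_t^\beta\delta_f=f_j\de_t^{\beta+e_j}\delta_f-t_j\de_t^{\beta+e_j}\delta_f$ is simply unavailable.

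There is also a logical gap even granting your method. You only treat $(L(s)+|L|)m$ for $m$ of weight exactly $|L|$. But $U^{|L|}$ is $\cD_X$-generated by $x^\alpha\de_t^\beta\delta_f$ of \emph{all} weights $\mu\geq|L|$, and on a weight-$\mu$ generator with $\mu>|L|$ the operator $L(s)+|L|$ acts as $(|L|-\mu)\cdot{\rm id}$ plus a term raising the $x$-degree. Iterating, the $(|L|-\mu)^N$ contribution persists, so nilpotency on ${\rm Gr}_U^{|L|}$ actually forces every weight-$>\!|L|$ generator to already lie in $U^{>|L|}$. That is a strictly stronger statement than showing $x_i m\in U^{>|L|}$ for $m$ of weight $|L|$, and you have not addressed it. Any correct argument at $\chi=|L|$ must establish this (for instance by bringing in the Jacobian relations that the isolated-singularity hypothesis actually provides, not just the $f_i$ themselves).
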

\begin{proof} \lemmaref{lem-criterionLV} shows $U^\bullet \cB_f \subseteq {}^L V^\bullet \cB_f$, so we need only prove the opposite inclusion. It is trivial to check that it satisfies the properties of \propositionref{prop-characterizeLV} except possibly the coherence condition, but this gives the desired containment.
\end{proof}

\bibliography{bib}
\bibliographystyle{abbrv}

\end{document}